\newcommand{\be}{\begin{equation}}
\newcommand{\ee}{\end{equation}}
\newcommand{\beq}{\begin{eqnarray}}
\newcommand{\eeq}{\end{eqnarray}}
\newcommand{\la}{\langle}
\newcommand{\ra}{\rangle}
\newcommand{\rt}{\rightarrow}
\newcommand{\lrt}{\longrightarrow}
\newcommand{\q}{\quad}
\newcommand{\text}{\mbox}
\newtheorem{assumption}{Assumption}
\newtheorem{algorithm}{Algorithm}
\begin{document}

\title*{New Trends in General Variational Inequalities  }
 \titlerunning{General variational inequalities  }
%for an abbreviated version of
% your contribution title if the original one is too long
\author{Muhammad Aslam Noor, Khalida Inayat Noor and  Michael Th. Rassias}
\authorrunning{M. A. Noor, K. I. Noor, M. T. Rassias}
% your contribution title if the original one is too long
\institute{Muhammad Aslam Noor \at COMSATS University Islamabad, Park Road, Islamabad, Pakistan\\
\email{noormaslam@gmail.com}
\and Khalida Inayat Noor \at COMSATS University Islamabad, Park Road, Islamabad, Pakistan\\
\email{khalidan@gmail.com}
\and  Michael Th. Rassias \at  Institute of Mathematics, University of Zurich, CH-8057, Zurich, Switzerland\\
  Moscow Institute of Physics and Technology, 141700 Dolgoprudny, Institutskiy per, d. 9, Russia
 Institute for Advanced Study, Program in Interdisciplinary Studies, 1 Einstein Dr, Princeton, NJ 08540, USA.\\
\email{michail.rassias@math.uzh.ch}}
%
% Use the package "url.sty" to avoid
% problems with special characters
% used in your e-mail or web address
%
\maketitle

\abstract*{ It is well known that general variational inequalities provide
us with a unified, natural, novel and simple framework to study a wide
class of unrelated problems, which arise  in pure and applied sciences. In
this paper, we present a number of new and known numerical techniques
for solving general variational inequalities and equilibrium problems  using various techniques
including projection, Wiener-Hopf equations, dynamical systems,
auxiliary principle and  penalty function.  General variational-like inequalities are introduced and investigated.
Properties of higher order strongly general convex functions have been discussed.
The auxiliary principle technique is used to suggest and analyze some iterative methods for solving
higher order general variational inequalities. Some new classes of strongly exponentially general convex functions
are introduced and discussed. Our proofs of convergence are very simple as compared with
other methods. Our results present a significant improvement of previously
known methods for solving variational inequalities and related
optimization problems. Since the general variational inequalities include (quasi) variational
inequalities and (quasi) implicit complementarity problems as special
cases, these results  continue to hold for these problems. Some numerical results are included
to illustrate the efficiency of the proposed methods.
Several open problems have been suggested for further research in these
areas.
}

\abstract{ It is well known that general variational inequalities provide
us with a unified, natural, novel and simple framework to study a wide
class of unrelated problems, which arise  in pure and applied sciences. In
this paper, we present a number of new and known numerical techniques
for solving general variational inequalities and equilibrium problems  using various techniques
including projection, Wiener-Hopf equations, dynamical systems,
auxiliary principle and  penalty function.  General variational-like inequalities are introduced and investigated.
Properties of higher order strongly general convex functions have been discussed.
The auxiliary principle technique is used to suggest and analyze some iterative methods for solving
higher order general variational inequalities. Some new classes of strongly exponentially general convex functions
are introduced and discussed.  Our proofs of convergence are very simple as compared with
other methods. Our results present a significant improvement of previously
known methods for solving variational inequalities and related
optimization problems. Since the general variational inequalities include (quasi) variational
inequalities and (quasi) implicit complementarity problems as special
cases, these results   continue to hold for these problems. Some numerical results are included
to illustrate the efficiency of the proposed methods.
Several open problems have been suggested for further research in these
areas.
}

\section{Introduction}

Variational inequalities theory, which was introduced Stampacchia \cite{172} and Ficchera \cite{38} indecently  has
emerged as an interesting and fascinating branch of applicable mathematics
with a wide range of applications in industry, finance, economics, social,
pure and applied sciences. Variational inequalities may be viewed as novel generalization of the
variational principles, the origin of which can be traced back to Euler, Lagrange and Bernoulli brothers.
The variational principles have placed a crucial and important in the development of various fields
of sciences and have appeared as a unifying force.  The ideas
and techniques of variational inequalities are being applied in a variety
of diverse areas of sciences and prove to be productive and innovative.
 Variational inequalities have been extended and generalized in several directions using novel and new
techniques. The minimum of a differentiable convex function $F$ on the convex set $K$
is equivalent to finding $u \in K $ such that
\begin{eqnarray}\label{1}
\langle F^{\prime}(u), v-u \rangle \geq 0, \quad \forall v\in K,
\end{eqnarray}
which is known as the variational inequality (\ref{1}). Here $F^{\prime}(u) $ is the Frechet differential.
Stampacchia \cite{172} proved that potential problems associated with elliptic equations can be studied by
 the variational inequality. This simple fact inspired a great interest in variational inequalities.
Lions and Stampacchia \cite{54} studied the existence of a solution of variational inequalities using essentially
the auxiliary principle technique coupled with projection idea.\\
Lemke \cite{62} considered the problem of finding $u\in R^{n}_{+} $ such that
\begin{eqnarray}\label{2}
u \geq 0, \quad Au \geq 0,  \quad \langle Au, u\rangle =0,
\end{eqnarray}
is called the linear complementarity problem. Here $A $ is a linear operator. Lemke \cite{62} proved that the two person game theory problems
can be studied in the framework of linear complementarity problem (\ref{2}).  See also  Lemke and Howson, Jr.\cite{63} and Cottle al et \cite{27} for the nonlinear complementarity problems.\\
It is worth mentioning that both the problems (\ref{1}) and (\ref{2}) are  different and have been studied in infinite dimensional
spaces and finite dimensional spaces independently using quite different techniques. However. Karmardian \cite{57} established that both problems
(\ref{1}) and (\ref{2}) are equivalent, if the underlying set $K $ is a convex cone. This equivalent formulation  played an important role in developing several techniques for solving these problems.\\

If the convex set $K$ depends upon the solution explicitly  or implicitly, then
variational inequality  is called the quasi variational inequality. Quasi variational inequalities
were introduced and investigated by Bensoussan and Lions \cite{15} in control theory. In fact, for a given operator
$T: H \lrt H,$ and a point-to-set mapping $K : u \lrt K(u), $ which associates a closed convex-valued set
$K$ with any element $u $ of $H,$ we consider the problem of finding $u
\in K(u)$ such that
\begin{eqnarray}\label{3}
\la Tu, v- u \ra \q \geq 0,   \q \forall v \in K(u),
\end{eqnarray}
which is known as quasi variational inequality. Chan and Pang \cite{22} considered
 the generalized quasi variational inequalities for set-valued operators.
Noor \cite{84} established the equivalence between the quasi variational inequalities and the fixed point formulation
and used this equivalence to suggest some iterative methods for solving (\ref{3}). This equivalence was used to study
 the existence of a solution of quasi variational inequalities and develop numerical methods.\\

Related to the quasi variational inequality, we have the problem of
finding $u \in H $ such that
\begin{eqnarray}\label{4}
u\geq m(u), \quad Tu \geq 0, \quad \langle Tu, u-m(u) \rangle =0,
\end{eqnarray}
is called the implicit(quasi) complementarity problem. where $m $ is a point-to-point mapping.
Using the technique of Karamardian \cite{57}, Pang \cite{154} and Noor \cite{84} established the equivalence between the problems (\ref{3}) and (\ref{4}).
Noor \cite{85,86}  has used the change of variables technique to prove that the implicit complementarity problems are equivalent to the
fixed point problem. This alternative formulation played an important part in the development of iterative methods
for solving various type of complementarity problems and related optimization problems.
It is an interesting problem to extend this technique for solving variational inequalities.\\

Motivated and inspired by the ongoing research in these fields, Noor \cite{87} introduced and investigated a new class of
variational inequalities involving two operators. For given nonlinear operators $T,g, $ consider the problem of finding
$u \in H: g(u) \in K, $ such that
\begin{eqnarray}\label{5}
\langle Tu, g(v)-g(u) \rangle \geq 0, \quad \forall v\in H: g(v) \in K,
\end{eqnarray}
 which is known as the {\bf general (Noor) variational inequalities. } It turned out that
odd-order and nonsymmetric obstacle, free, unilateral and moving
boundary value problems arising in  pure and applied sciences can be
studied via the general variational inequalities, see \cite{87, 88,89,90,91}.\\
If $K$ is a convex cone, then the  implicit complementarity problem (\ref{5}) is equivalent to finding $u \in H $ such that
\begin{eqnarray}\label{6}
g(u)\geq 0, \quad Tu \in K^*, \quad \langle Tu, g(u) \rangle =0,
\end{eqnarray}
which is known as the general complementarity problem, where $K^* $ is the dual (polar) cone.
We would like to point out that for appropriate and suitable choice of the operators $T, g $ and the convex sets $K, $ one can obtain
several known and new classes of variational inequalities and complementarity problems as special cases of the problem (\ref{5}).

During the  years which have been elapsed since its discovery, a
number of numerical methods including projection method and its variant
forms, Wiener-Hopf equations, auxiliary principle,
dynamical systems have been developed for solving the  variational
inequalities and related optimization problems.  Projection method
and its variants forms including the Wiener-Hopf equations represent
important tools for finding the approximate solution of variational
inequalities, the origin of which can be traced back to Lions and
Stampacchia [66]. The main idea in this technique is to establish the
equivalence between the variational inequalities and the fixed-point
problem by using the concept of projection. This alternative formulation
has played a significant part in developing various projection-type
methods for solving variational inequalities. It is well known that the
convergence of the projection methods requires that the operator must be
strongly monotone and Lipschitz continuous. Unfortunately these strict
conditions rule out many applications of this method. This fact motivated
to  modify the projection method or to  develop other methods.  The
extragradient-type methods overcome this difficulty by performing an additional forward step and a
projection at each iteration according to the double projection. These
methods can be viewed as predictor-corrector methods. Their convergence
requires only that a solution exists and the monotone operator is
Lipschitz continuous.  When the operator is not Lipschitz continuous or
when the Lipschitz continuous constant is not known, the extragradient
method and its variant forms require an Armijo-like line search procedure
to compute the step size with a new projection needed for each trial,
which leads to expansive computation. To overcomes these difficulties,
several modified projection and extragradient-type methods have been
suggested and developed for solving variational inequalities.
See \cite{6,7,8,9,10,17,18,24,36,37,39,42,43,44,49,55,63,68,71,79,82,83,90,92,95,100,101,
102,103,104,106,107,110,111,112,113,114,116,117,119,120,133,134,135,137,138,142,143,144,145,149,151,152,159}
and the references therein.\\

In Section 4,  we  have the concept of the general Wiener-Hopf equations,
which was introduced by Noor \cite{90}.
As a special case, we obtain the original Wiener-Hopf equations, which
were considered and studied by Shi \cite{167} and Robinson [165]  in
conjunction with variational inequalities from different point
of views. Using the projection technique, one usually establishes the
equivalence between the variational inequalities and the Wiener-Hopf
equations. It turned out that the Wiener-Hopf equations are more
general and flexible. This approach has played  not only an important
part in developing various efficient projection-type methods, but  also
in studying the sensitivity analysis, dynamical systems as well as other concepts of
variational inequalities. Noor, Wang and Xiu \cite{151,152} and Noor and
Rassias \cite{145} have suggested and analyzed some predictor-corrector type
projection methods by modifying the Wiener-Hopf equations. These methods are also
 known as Forward-Backward methods, see Tseng \cite{178,179}.
 It has been shown  that these predictor-corrector-type methods are
efficient and robust. Some numerical examples are given to illustrate the efficiency and
implementation of the proposed methods. Consequently, our results represent a refinement and
improvement of the known results.\\

Section 5 is devoted to the concept of projected dynamical system in the context
of variational inequalities, which  was introduced by Dupuis and Nagurney \cite{35} by
using the fixed-point formulation of the variational inequalities.
For the recent development and applications of the dynamical systems,
see \cite{13,34,35,40,41,56,75,79,109,115,116,122}. In this technique, we
reformulate the variational inequality problem as an initial value
problem. Using the discretizing of the dynamical systems, we suggest some new iterative methods
 for solving the general variational inequalities.\\

It is well known fact that to implement the projection-type methods,
one has to evaluate the projection, which is itself a difficult problem.
Secondly, the projection and Wiener-Hopf equations techniques can't be
extended and generalized for some classes of variational inequalities
involving the nonlinear (non)differentiable functions, see [92,94,108].
These facts motivated to use the auxiliary principle technique.  This
technique deals with finding the auxiliary variational inequality and
proving that the solution of the auxiliary problem is the solution of the
original problem by using the fixed-point approach. It turned out that
this technique can be used to find the equivalent differentiable
optimization problems, which enables us to construct gap (merit)
functions. Glowinski et al. \cite{47} used this technique to study the
existence of a solution of mixed variational inequalities. Noor \cite{93,94,95,100,101,114,121,122}
has used  this technique to suggest some predictor-corrector
methods for solving various  classes of variational inequalities. It is
well known that a substantial number of numerical methods can be obtained
as special cases from this technique. We use this technique to suggest and analyze
some explicit predictor-corrector  methods for general
variational inequalities. In this paper, we give the basic idea of the
inertial proximal methods and show that the auxiliary principle technique can be used  to
construct gap (merit) functions.  We use the gap function to consider an optimal control problem
governed by the general variational inequalities. The control
problem as an optimization problem is also referred as  a generalized
bilevel programming problem or mathematical programming with equilibrium
constraints. These results are mainly due to Deitrich \cite{32,33}. It is
an open problem to compare the efficiency of the inertial methods  with other methods
and this is another direction for future research.\\
In Section 7, we  discuss the application of the penalty function method,
which was introduced by Lewy and Stampacchia \cite{64} to study the regularity of
the solutions of the variational inequalities.
It is known that the finite difference and similar  numerical methods cannot be
applied to find the approximate solutions of the obstacle, free and moving
value problems due to the presence of the obstacle and other constraint
conditions. However, it is known that if the obstacle is known then these
obstacle and unilateral problems can be  characterized by a system
of differential equations in conjunction with the general
variational inequalities using the penalty function technique.
 Al-Said \cite{3}, Noor and Al-Said \cite{112},
Noor and Tirmizi \cite{147} and Al-Said et al.\cite{4,5} used this
technique to develop some numerical methods for solving these system of
differential equations.  The main advantage of  this technique is its
simple applicability for solving system of differential equations. We here
give only the main idea of this technique for solving odd-order obstacle
and unilateral problems.\\

In recent years, much attention has been given to study the equilibrium
problems, which were considered and studied by Blum and Oettli \cite{19}  and Noor and
Oettli \cite{141}. It is known that equilibrium problems include variational
inequalities and complementarity problems  as special cases. It is
remarked that there are very few iterative methods for solving equilibrium
problems, since the projection method and its variant forms including the
Wiener-Hopf equations cannot be extended for these problems. We use the auxiliary principle technique to suggest and
analyze some iterative type methods for solving general equilibrium problems, which is considered in Section 8.\\

Hanson \cite{50} introduced the concept of the invex functions to study the mathematical programming problems, which appeared to be
significant generalization of the convex functions. Ben-Israel and Mond \cite{14} considered the concepts of invex sets and preinvex functions.
They proved that the differentiable preinvex functions are invex function. Mohan and Neogy \cite{74} proved that the converse is
also true under certain conditions. Noor \cite{93} proved that the optimality conditions can be characterized by a class of variational inequalities,
which is called variational-like inequality. Due to the inherent nonlinearity, one can not use the projection type iterative methods for
considering the existence results and numerical methods for variational-like inequalities. However, one uses the auxiliary principle
technique to study the existence  and numerical methods for variational-like inequalities. Fulga and previda \cite{43} and Awan et al\cite{11}
considered the general invex sets and general preinvex functions involving an arbitrary function and studied their basic properties.
We  show that the minimum of the differentiable general preinvex functions is characterized  by a class of variational-like inequality.
This fact motivated us to introduce a the general variational-like inequalities and study its properties. We have used the auxiliary principle technique
to analyze some iterative methods for solving the general variational-like inequalities. Several special cases are discussed as applications of the general
variational-like inequalities. These aspects are discussed in Section 9.\\

In section 10, we consider the concept of higher order strongly general convex functions involving an arbitrary function, which can viewed as a novel and innovative extension of the strongly
convex functions. Polyak \cite{158} in 1966 introduced the strongly convex functions to study the optimization problems. Zu and Marcotte \cite{200} discussed
 the role of the strongly convex functions in the analysis of the iterative methods for solving variational inequalities.  Mohsen et al\cite{75} introduced the higher order strongly  convex functions involving bifunction, which can be viewed as significant refinement of the higher order strongly convex function, which were considered by Lin and Fukushima \cite{65} in mathematical programming with equilibrium constraints. They have shown that parallelogram laws for Banach spaces can be obtained as applications of the higher order strongly convex functions.
 Parallelogram laws for Banach spaces were analyzed by Bynum \cite{21}  and Chen at al \cite{23,24,25}, which have applications in prediction theory and information technology.  We have investigated some basic properties of the higher 0rder strongly general convex functions and have shown that the optimality conditions of the differentiable higher order strongly general convex functions can be expressed as higher order general variational inequalities.\\
Higher order general variational inequalities are introduced in Section 11. Some iterative methods are suggested and analyzed for solving higher order general variational inequalities. It is shown that general variational inequalities and related optimization problems  can be obtained as applications.\\

Related to the convex functions, we have the concept of
exponentially convex(concave) functions, which have important applications in information
theory, big data analysis, machine learning and statistic. Exponentially convex functions
have appeared significant generalization of the convex functions, the origin of which can be
traced back to Bernstein \cite{16}. Avriel\cite{9,10} introduced the concept of $r$-convex functions, from
which one can deduce the exponentially convex functions. Antczak \cite{2} considered the $(r, p)$
convex functions and discussed their applications in mathematical programming and optimization theory.
Alirazaie and Mahar \cite{1} investigated the impact of exponentially concave functions in information theory.
Zhao et al\cite{199} discussed some characterizations of $r$-convex functions.
Awan et al \cite{5} also investigated some classes of exponentially convex functions. Noor and Noor \cite{132,133,134,135,136,137,138}
discussed the characterization of several classes of exponentially convex functions.  In Section 12, we introduce the concept of strongly
exponentially general convex functions and show that the strongly exponentially enjoy some nice properties, which convex functions have. \\

Theory of general variational inequalities  is quite broad, so
we shall content ourselves here to give the flavour of the ideas and
techniques involved. The techniques used to analysis  the iterative
methods and other results for general variational inequalities  are a
beautiful blend of ideas of pure and applied mathematical sciences.
In this paper, we have presented the some  results
regarding the development of various algorithms, their convergence
analysis and penalty computational technique. Although this paper is expository
in nature, our choice has been rather to consider some interesting aspects
of general variational inequalities. The
framework chosen should be seen as a model setting for more general
results for other classes of variational inequalities and variational
inclusions. One of the main purposes of this expository paper is to
demonstrate the close connection among various classes of algorithms for
the solution of the general variational inequalities and to point out
that researchers in different field of variational inequalities and
optimization have been considering parallel paths.  We would like to emphasize
that the results obtained and discussed in this paper may motivate and bring a large number of novel,
innovate and potential applications, extensions and interesting topics
in these areas. The comparison of the proposed methods with other techniques
needs further efforts and is itself an open interesting problem
We have given only a brief introduction of the general variational inequalities
in the real Hilbert spaces. For some other aspects of the general variational inequalities, readers
are referred  to the state-of-art articles of Noor \cite{85,86,87,89,90,91,92,93,94,95,100,101,102,103,104,105,110,118,119,120,121,122,124}
and the references therein. The interested reader is advised to explore this field
further and discover  novel and fascinating applications of this theory in Banach and topological spaces.\\
It is perhaps part of the fascination of the subject that so many branches of pure and applied
sciences are involved in the variational inequality theory. The task of becoming conversant with
a wide spectrum of knowledge is indeed a real challenge. The general theory is quite technical,
so we shall content ourselves here to give the flavour of the main ideas involved. The
techniques used to analyze the existence results and iterative algorithms for variational
inequalities are a beautiful blend of ideas from different areas of pure and applied mathematical sciences.
The framework chosen should be seen as a model setting for more general results. However, by
just relying on these special results, interesting problems arising in the applications can be dealt with easily.
Our main motivation of this paper to give a summary account of the basic theory of variational inequalities set in the
framework of nonlinear operators defined on convex sets in a real Hilbert space. We focus our
attention on the iterative methods for solving variational inequalities. The equivalence between
the variational inequalities and the Wiener-Hopf equations has been used to suggest some new
iterative methods. The auxiliary principle technique is  applied to study the existence of the
solution and to propose a novel and innovative general algorithm for the general variational inequalities,
equilibrium problems and related optimization.

\section{Preliminaries and Basic concepts}

Let $H$ be a real Hilbert space, whose inner product
and norm are denoted by $\la \cdot , \cdot \ra$ and $\| \cdot \|$
respectively.\\

\begin{definition}The set $K$ in $H$ is said to be a convex  set, if
\begin{eqnarray*}
u+t(v-u)\in K,\quad \forall u,v\in K, t\in[0,1].
\end{eqnarray*}
\end{definition}
\begin{definition}A function $F$ is said to be a convex function, if
\begin{eqnarray*}
F((1-t)u+tv) \leq (1-t)F(u)+ tF(v), \quad \forall u,v \in K, \quad t \in [0,1].
\end{eqnarray*}
\end{definition}
It is well known that a function $F $ is a convex functions, if and only if, it satisfies  the inequality
\begin{eqnarray*}
F(\frac{a+b}{2}) \leq \frac{1}{b-a} \int^{b}_{a} F(x) dx \leq \frac{F(a)+F(b)}{2}, \quad \forall a, b \in I=[a,b],
\end{eqnarray*}
which is known as the Hermite-Hadamard type inequality. Such type of the inequalities provide us with the upper and lower bounds for the mean value integral.\\
If the  convex function $F $ is differentiable, then  $u \in K $ is the minimum of the $F, $ if and only if, $u\in K $ satisfies the inequality
\begin{eqnarray*}
\langle F^{\prime}(u), v-u \rangle \geq 0, \quad \forall v\in K,
\end{eqnarray*}
which is   called the variational inequality,  introduced and studied by Stampacchia \cite{172}  in 1964.
For the applications, formulation, sensitivity, dynamical systems, generalizations, and other aspects of the variational inequalities, see[1-200] and references  therein.  \\
It is known that a set may not be a convex set. However, a  set may  be made convex set with respect  to an arbitrary function.
Motivated by this fact, Youness \cite{196} introduced the concept of a general convex set involving an arbitrary function.
\begin{definition}
 The set $K $ in $H$ is said to be a general convex  set, if there exists an arbitrary function $g, $ such that
\begin{eqnarray*}
g(u)+t(g(v)-g(u))\in K_g,\quad\forall u,v\in H:  g(u),g(v) \in K, t\in[0,1].
\end{eqnarray*}
\end{definition}
Note that, if $g =I, $ the identity operator, then general convex set reduces to the classical convex set.
Clearly every convex set is a general convex set, but the converse is not true.\\
For the sake of simplicity, we always assume that $ \forall u,v \in H: g(u), g(v) \in K, $ unless otherwise.
\begin{definition}
 A function $F$ is said to a general convex function, if there exists an arbitrary function $g $ such that
\begin{eqnarray*}
F((1-t)g(u)+tg(v)) \leq (1-t)F(g(u)) +t F(g(v)), \forall v\in H: g(v) \in K, \quad t\in [0,1].
\end{eqnarray*}
\end{definition}
It is known that every convex function is a general convex function, but the converse is not true. For example, the function $ F(x) = e^{x^2} $ is a general convex function, but it is not convex.\\

We now define the  general convex functions on  $I_g= [g(a),g(b)].$
\begin{definition}
Let $I_g =[g(a)g(b)].$ Then $F$ is  a general convex function, if and only if,
\begin{eqnarray*}
\left|
\begin{array}{ccc}
1&1&1\\ g(a)&  g(x)&  g(b)\\
F(g(a))&  F(g(x))&F(g(b))
\end{array} \right|\geq0;\quad g(a)\leq g(x)\leq g(b).
\end{eqnarray*}
\end{definition}
\noindent One can easily show that the following are equivalent:
\begin{enumerate}
\item $F$ is a general convex function.\\
\item $F(g(x))\leq  F(g(a))+\frac{F(g(b))-F(g(a))}{g(b)-g(a)}(g(x)-g(a))$.\\
\item
$\frac{F(g(x))-F(g(a)}{g(x)-g(a)}\leq  \frac{F(g(b))-F(g(a))}{g(b)-g(a)}.$\\
\item
$(g(x)-g(a))F(g(a)) +(g(b)-g(a))F(g(x))+(g(a)-g(x))F(g(b))\geq 0.$\\
\item
$\frac{F(g(a))}{(g(b)-g(a))(g(a)-g(x))}+\frac{F(g(x))}{(g(x)-g(b))(g(a)-g(x))}+\frac{F(g(b))}{(g(b)-g(a))(g(x)-g(b))}\geq0$,
\end{enumerate}
where $g(x)= (1-t)g(a)+tg(b),  \in[0,1].$ \\ \\
We now show that the minimum of a differentiable general convex function on
$K$ in $H$  can be characterized by the general variational inequality. This result is mainly due to Noor \cite{110}.
\begin{theorem}\cite{110}\label{theorem1}  Let $F: K \longrightarrow H$ be a
differentiable  general convex function. Then \\$u \in H: g(u) \in K$ is the minimum of
a differentiable general convex function $F$ on $K,$ if and only if, $u \in : g(u) \in K$
satisfies the inequality
\begin{eqnarray}\label{2.1}
\la F'(g(u)), g(v)-g(u)\ra \geq 0, \quad \forall  g(v) \in K,
\end{eqnarray}
where $F'$ is the differential of $F$ at $g(u)\in K$ in the direction $g(v)-g(u).$
\end{theorem}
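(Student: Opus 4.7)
The plan is to mimic the classical convex-function argument, but everywhere the convex combination $(1-t)u + tv$ appears, we use the general-convex combination $(1-t)g(u) + tg(v)$, which lies in $K$ by Definition of a general convex set.

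For the necessity direction (minimum $\Rightarrow$ variational inequality), I would fix $v \in H$ with $g(v)\in K$ and, for $t\in(0,1]$, set $g_t := g(u) + t(g(v)-g(u)) = (1-t)g(u)+tg(v) \in K$. Since $u$ is a minimum of $F$ on $K$, we have $F(g_t) \geq F(g(u))$, hence
\begin{eqnarray*}
\frac{F(g(u)+t(g(v)-g(u))) - F(g(u))}{t} \geq 0.
\end{eqnarray*}
Letting $t \to 0^+$ and invoking differentiability of $F$ in the direction $g(v)-g(u)$ yields
\begin{eqnarray*}
\langle F'(g(u)), g(v)-g(u)\rangle \geq 0,
\end{eqnarray*}
which is precisely (\ref{2.1}).

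For the sufficiency direction (variational inequality $\Rightarrow$ minimum), I would exploit the general-convexity inequality of $F$: for any $g(v)\in K$ and $t\in(0,1]$,
\begin{eqnarray*}
F((1-t)g(u)+tg(v)) \leq (1-t)F(g(u)) + tF(g(v)),
\end{eqnarray*}
which rearranges to
\begin{eqnarray*}
\frac{F(g(u)+t(g(v)-g(u))) - F(g(u))}{t} \leq F(g(v)) - F(g(u)).
\end{eqnarray*}
Passing $t\to 0^+$ gives $\langle F'(g(u)), g(v)-g(u)\rangle \leq F(g(v)) - F(g(u))$. Combined with the hypothesis (\ref{2.1}), we conclude $F(g(u)) \leq F(g(v))$ for every $g(v)\in K$, so $u$ realizes the minimum.

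There is no real obstacle; the only subtlety is ensuring that the general-convex combination $g(u)+t(g(v)-g(u))$ genuinely lies in $K$ so that both the minimum inequality (in direction 1) and the general-convexity inequality of $F$ (in direction 2) can be applied at that point. This is guaranteed by the standing assumption $g(u),g(v)\in K$ together with the definition of a general convex set. The directional derivative limits are justified by the assumed differentiability of $F$ at $g(u)$ in the direction $g(v)-g(u)$.
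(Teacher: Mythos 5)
Your proposal is correct and follows essentially the same route as the paper's own proof: the necessity direction forms the difference quotient from $F(g(u))\leq F(g(u)+t(g(v)-g(u)))$ and lets $t\to 0^+$, and the sufficiency direction rearranges the general-convexity inequality to bound the difference quotient by $F(g(v))-F(g(u))$ before invoking (\ref{2.1}). The only cosmetic difference is that the paper also inserts the (redundant) convexity upper bound in the necessity step, which you correctly omit.
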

\begin{proof} Let $u \in H: g(u) \in K$ be a minimum of general convex function
$F$ on $K.$ Then
\begin{eqnarray}\label{2.2}
F(g(u)) \leq F(g(v)), \quad  \forall  g(v) \in K.
\end{eqnarray}
Since $K$ is a general convex set, so, for all $u,v \in K, t \in [0,1],
g(v_t)=g(u)+t(g(v)-g(u)) \in K.$ Setting $g(v)= g(v_t)$ in (\ref{2.2}), we have
\begin{eqnarray*}
F(g(u)) \leq F(g(u)+ t(g(v)-g(u)) \leq F(g(u))+t(F(g(v)-g(u)).
\end{eqnarray*}
Dividing the above inequality by $t$ and taking $t \longrightarrow 0,$ we
have
\begin{eqnarray*}
\la F'(g(u)), g(v)-g(u) \ra  \geq 0,
\end{eqnarray*}
which is the required result (\ref{2.1}).\\
Conversely, let $u \in H, g(u) \in K$ satisfy the inequality (\ref{2.1}). Since
$F$  is a general convex function, so $ \forall g(u),g(v) \in K, \quad t \in [0,1], \quad
g(u)+t(g(v)-g(u)) \in K $ and
\begin{eqnarray*}
F(g(u)+t(g(v)-g(u))) \leq (1-t)F(g(u))+tF(g(v)),
\end{eqnarray*}
which implies that
\noindent
\begin{eqnarray*}
F(g(v))-F(g(u)) \geq \frac{F(g(u)+t(g(v)-g(u)))-F(g(u))}{t}.
\end{eqnarray*}
Letting $t \longrightarrow 0,$ we have
\begin{eqnarray*}
F(g(v))-F(g(u)) \geq \la F'(g(u)),g(v)-g(u)\ra  \geq 0, \quad \mbox{using (\ref{2.1}),}
\end{eqnarray*}
which implies that
\begin{eqnarray*}
F(g(u)) \leq F(g(v)), \quad \forall g(v) \in K,
\end{eqnarray*}
showing that $u \in H:  g(u) K$ is the minimum of $F$ on $K$ in $H.$ \hfill \qquad
$\Box $
\end{proof}
Theorem{\ref{theorem1} implies that general convex programming problem can be studied via
the general variational inequality (\ref{2.5})  with $\quad Tu = F'(g(u)).$ In a
similar way, one can show that the general variational inequality is the
Fritz-John condition of the inequality constrained  optimization problem.\\
In many applications, the general variational inequalities do not arise as the minimization of the
differentiable general convex functions. Also, it is known that the variational inequality introduced
by Stampacchia \cite{172} can only be used to study the even-order boundary value problems.
 These  facts  motivated Noor \cite{87} to introduce a more general  variational inequality involving
  two distinct operators.  General variational inequalities is a unified framework to study such type problems.\\

Let $K$ be a closed convex set in $H$ and $T,g: H \lrt H$ be
nonlinear operators. We now consider the problem of finding $u \in H,g(u) \in K$ such that
\begin{eqnarray}\label{2.5}
\la Tu, g(v)-g(u)\ra  \geq 0, \q \forall v\in H;  g(v) \in K.
\end{eqnarray}
Problem (\ref{2.5}) is called the {\it general variational inequality, } which
was introduced and studied by Noor \cite{87} in 1988.  It has been
shown that a large class of unrelated odd-order and nonsymmetric obstacle,
unilateral, contact, free, moving, and equilibrium problems arising in regional, physical, mathematical,
engineering and applied sciences can be studied in the unified
and general framework of the general variational inequalities (\ref{2.5}). Luc and Noor \cite{69} have studied the
local uniqueness of solution of the general variational inequality (\ref{2.5})
by using the concept of Frechet approximate Jacobian.\\

We now discus some special cases of the general variational inequality (\ref{2.5}).\\ \\
\noindent{\bf(I).} For $g \equiv I,$ where $I$ is the
identity operator, problem  (\ref{2.5}) is equivalent to finding $u \in K$ such that
\begin{eqnarray}\label{2.6}
\la Tu, v-u \ra \geq 0 \quad \forall  v \in K,
\end{eqnarray}
which is known as the classical variational inequality introduced and
studied by Stampacchia \cite{172} in 1964. For recent state-of-the-art in this
field, see [1-200] and the references therein.\\
From now onward, we assume that $g$ is onto $K$ unless, otherwise
specified.\\
\noindent{\bf(II).} If $ N(u)= \{w \in H: \la w,v-u \ra \leq 0,\quad \forall v \in K$ \} is a
normal cone to the convex set $K$ at $u$, then the general variational
inequality (\ref{2.5}) is equivalent to finding $ u \in H, g(u) \in K$ such that
\begin{eqnarray*}
-T(u) \in N(g(u)),
\end{eqnarray*}
which are known as the generalized nonlinear equations, see  \cite{129,130}.\\
\noindent{\bf(III).}  If $P^{tg}$ is the projection of $-Tu$ at $g(u) \in K,$ then it has been
shown that the general variational inequality problem (\ref{2.5}) is equivalent
to finding $ u \in H, g(u) \in K$ such that
\begin{eqnarray*}
P^{tg}[-Tu] := P^{tg}(u) = 0,
\end{eqnarray*}
which are known as the  tangent projection equations.
This equivalence has been used to discuss the local convergence
analysis of a wide class of iterative methods for solving general
variational inequalities (\ref{2.5}).\\
\noindent{\bf(IV).} If $K^*= \{u \in H: \la u, v \ra \geq 0, \forall v \in K\}$ is a polar (dual) cone
of a convex cone $K$ in $H$, then problem (\ref{2.5}) is equivalent to
finding $u \in H$ such that
 \begin{eqnarray}\label{2.7}
 g(u) \in K, \q Tu \in K^* \q \mbox{and} \q \la Tu, g(u)\ra = 0,
\end{eqnarray}
which is known as the general complementarity problem, see Noor \cite{87}.
For $g(u)= m(u)+K,$ where $ m$ is a point-to-point mapping,
 is called the  implicit (quasi) complementarity problem. If $g
\equiv I,$ then problem (\ref{2.7}) is known as the generalized
complementarity problems. Such problems have been studied extensively
in recent years.\\
\noindent{\bf(V).}  If $ K=H, $ then the general variational inequality (\ref{2.5}) is equivalent to finding $u\in H: g(u) \in H $ such that
\begin{eqnarray*}
\langle Tu, g(v) \rangle = 0, \quad \forall v\in H: g(v) \in H,
\end{eqnarray*}
which is called the weak formulation of the odd-order and nonsymmetric boundary value problems.\\
For suitable and appropriate choice of the operators and  spaces, one can
obtain several classes of variational inequalities and related optimization
problems as special cases of the general variational inequalities (\ref{2.5}). \\

We also need the following result, which plays a key role in the studies of variational inequalities and optimization theory.\\
\begin{lemma}\cite{59}\label{lemma1} For  a given $z \in H$, $u \in K$ satisfies
the inequality
 \begin{eqnarray}\label{2.8}
\la  u - z, v -u \ra  \geq 0, \q \forall  v \in K,
\end{eqnarray}
if and only if
$$u = P_K z, $$
where $P_K$ is the projection of $H$ onto $K$.

\end{lemma}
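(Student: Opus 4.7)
The plan is to prove the two implications separately, leveraging the Hilbert-space structure and the fact that a closed convex set admits a unique nearest point to any given $z\in H$. Throughout, I would take the definition $P_K z \in K$ as the unique minimizer of the functional $\varphi(w) = \tfrac{1}{2}\|w-z\|^2$ over $w \in K$; existence and uniqueness of this minimizer is standard for closed convex subsets of a Hilbert space (parallelogram-law argument applied to a minimizing sequence).

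For the forward direction, suppose $u = P_K z$. For an arbitrary $v \in K$, convexity of $K$ yields $u + t(v-u) \in K$ for every $t \in [0,1]$. Define $\psi(t) = \tfrac{1}{2}\|u + t(v-u) - z\|^2$, which attains its minimum on $[0,1]$ at $t=0$ by the definition of $u = P_K z$. Expanding gives
\begin{eqnarray*}
\psi(t) = \tfrac{1}{2}\|u-z\|^2 + t\langle u-z, v-u\rangle + \tfrac{t^2}{2}\|v-u\|^2,
\end{eqnarray*}
so $\psi'(0) = \langle u-z, v-u\rangle \geq 0$, which is exactly the desired inequality.

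For the converse, suppose $u \in K$ satisfies $\langle u-z, v-u\rangle \geq 0$ for all $v \in K$. The idea is the standard expansion
\begin{eqnarray*}
\|v-z\|^2 = \|(v-u) + (u-z)\|^2 = \|v-u\|^2 + 2\langle v-u, u-z\rangle + \|u-z\|^2.
\end{eqnarray*}
The hypothesis forces the cross term to be nonnegative, and $\|v-u\|^2 \geq 0$, so $\|v-z\|^2 \geq \|u-z\|^2$ for every $v\in K$. This shows $u$ minimizes the distance to $z$ over $K$, hence $u = P_K z$ by uniqueness of the projection.

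This result is classical and the calculations are largely routine; there is no substantive obstacle. The only subtle point worth being careful about is the justification that the nearest-point projection onto $K$ is well-defined and single-valued, which relies on closedness and convexity of $K$ together with completeness of $H$ via the parallelogram identity. Once that is in hand, both directions reduce to one-line expansions of the squared norm.
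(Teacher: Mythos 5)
Your proof is correct. The paper itself gives no proof of this lemma; it simply cites Kinderlehrer and Stampacchia, so there is nothing internal to compare against. Your argument is the standard classical one — characterizing $P_K z$ as the minimizer of $w \mapsto \tfrac{1}{2}\|w-z\|^2$, obtaining the variational inequality from the one-sided derivative at $t=0$ along the segment $u + t(v-u)$, and reversing it via the expansion of $\|v-z\|^2$ — and both directions are carried out without gaps, including the correct caveat that well-definedness of the projection requires closedness and convexity of $K$ together with completeness of $H$.
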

Also, the projection
operator $P_K$ is nonexpansive, that is,
\begin{eqnarray*}
\|P_K(u)-P_K(v)\| \leq \|u-v\|, \quad  \forall u,v \in H.
\end{eqnarray*}
 and satisfies the inequality
\begin{eqnarray*}
\|P_Kz-u\|^2 \leq \|z-u\|^2 -\|z-P_K z\|^2, \quad \forall z,u \in H.
\end{eqnarray*}

\subsection{Applications }
\vskip .4pc
We now discuss some applications of general variational inequalities (\ref{2.5}).
For this purpose, we consider the functional $I[v], $ defined as
\begin{eqnarray}\label{2.9}
I[g(v)]:= \la Tv,g(v) \ra -2 \la f, g(v) \ra, \quad \forall v \in H,
\end{eqnarray}
which is called the general energy or  potential, virtual work  functional.
We remark that, if $g \equiv I, $ the identity operator, then the
functional $I[v] $ reduces to
\begin{eqnarray*}
J[v] = \la Tv,v \ra - 2 \la f, v \ra , \quad \forall v \in H,
\end{eqnarray*}
which is known as the standard energy function.  \\
It is known that, if the operator $T : H \longrightarrow H $ is linear, symmetric and positive,
then the minimum of the functional $J[v] $  on the closed and  convex set
$K$ in $H$ is equivalent to finding $u \in K $ such that
\begin{eqnarray}\label{2.10}
\la Tu,v-u \ra \geq \la f, v-u \ra, \quad \forall  v \in K.
\end{eqnarray}
Inequality of type (\ref{2.10}) is known as variational inequality, which was
introduced by Stampacchia \cite{172} in the study of potential theory.  See also Fichera \cite{38}. It is
clear that the symmetry and positivity of the operator $ T $ is must. On
the other hand, there are many important problems, which are nonsymmetric
and non-positive. For the nonsymmetric and odd-order problems, many
methods have developed by several authors including Filippov \cite{39} and
Tonti \cite{176} to construct the energy functional of type (\ref{2.9}} by
introducing the concept of $g$-symmetry and $g$-positivity of the operator
$g. $ We now recall the following concepts.\\

\begin{definition}\cite{39,176}  $ \forall u,v \in H, $ the operator $T : H
\longrightarrow H $ is said to be : \\
\noindent{\bf (a). } {\it $g$-symmetric , } if and only if,
\begin{eqnarray*}
\la Tu,g(v) \ra = \la g(u),Tv \ra.
\end{eqnarray*}
\noindent{\bf (b). }{$g$-positive, } if and only if,
\begin{eqnarray*}
\la Tu, g(u) \ra \geq 0.
\end{eqnarray*}
\noindent{\bf (c). }  {\it $g$-coercive ($g$-elliptic, } if there exists a
constant $\alpha > 0 $ such that
\begin{eqnarray*}
\la Tu, g(u) \ra \geq \alpha \|g(u)\|^2.
\end{eqnarray*}
\end{definition}
Note that $g$-coercivity implies $g$-positivity, but the converse is not
true. It is also worth mentioning that there are operators which are not
$g$-symmetric  but $g$-positive. On the other hand, there are $g$-positive
, but not $g$-symmetric operators. Furthermore, it is well-known \cite{39,176}
that if, for a linear operator $T, $ there exists an inverse operator
$T^{-1} $ operator on $R(T) $ with $\overline{R(T)} = H, $ then one can
find an infinite set of auxiliary operators $g$ such that the operator $T
$ is both $g$-symmetric and $g$-positive.\\

We now consider the problem of finding the minimum of the functional
$I[v], $ defined by (\ref{2.9}), on the convex set $K $ in $H $ and this is the
main motivation of our next result.

\begin{theorem}\label{theorem2} Let the operator $T : H \longrightarrow H $
be  linear, $g$-symmetric and $g$-positive. If the operator $g: H
\longrightarrow H $ is either linear or convex, then the function $ u \in
H $ minimizes the functional $ I[v] $ defined by (\ref{2.9}) on the convex set $K $ in $H $ if
and only if $ u \in H, \quad g(u) \in K $ such that
\begin{eqnarray}\label{2.11}
\la Tu, g(v)-g(u) \ra \geq \la f, g(v)-g(u) \ra \quad \forall v\in H: g(v) \in K.
\end{eqnarray}
\end{theorem}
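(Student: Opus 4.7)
The plan is to establish the two directions by exploiting bilinearity of the form $a(u,v):=\langle Tu, g(v)\rangle$, which is symmetric by $g$-symmetry, and by applying $g$-positivity only for the sufficiency direction. The key algebraic identity I would set up first (assuming $g$ is linear, so that $g(v-u)=g(v)-g(u)$) is
\begin{eqnarray*}
I[g(v)]-I[g(u)] \;=\; 2\langle Tu-f,\, g(v)-g(u)\rangle \;+\; \langle T(v-u),\, g(v-u)\rangle,
\end{eqnarray*}
obtained by writing $v=u+(v-u)$, expanding $\langle Tv,g(v)\rangle$ by linearity of $T$ and $g$, and collapsing the two cross terms via $g$-symmetry $\langle T(v-u), g(u)\rangle=\langle g(v-u),Tu\rangle=\langle Tu, g(v-u)\rangle$. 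This identity is the heart of the argument: both implications will reduce to it.

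For the sufficiency direction (variational inequality \eqref{2.11} implies $u$ minimizes $I$), I would simply invoke the identity above: the first term is $\ge 0$ by \eqref{2.11}, and the second term is $\ge 0$ by $g$-positivity applied to $v-u$. Hence $I[g(v)]\ge I[g(u)]$ for all admissible $v$, proving minimality.

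For the necessity direction (minimum implies \eqref{2.11}), I would use the standard variational perturbation argument. Fix $v$ with $g(v)\in K$ and, for $t\in[0,1]$, consider $v_t=(1-t)u+tv$, so that (using $g$ linear) $g(v_t)=(1-t)g(u)+tg(v)\in K$ by the general convexity of $K$ (Definition of general convex set). Define $\phi(t):=I[g(v_t)]$; expanding via linearity of $T$, $g$, and $g$-symmetry gives $\phi$ as an explicit quadratic in $t$. Since $\phi$ attains its minimum at $t=0$, we have $\phi'(0)\ge 0$; direct differentiation (or just reading off the coefficient of $t$ after using the identity above, which yields $\phi(t)-\phi(0)=2t\langle Tu-f, g(v)-g(u)\rangle + t^{2}\langle T(v-u),g(v-u)\rangle$) produces precisely \eqref{2.11}.

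The main obstacle is handling the alternative hypothesis that $g$ is convex rather than linear, because then $v_t=(1-t)u+tv$ need not satisfy $g(v_t)=(1-t)g(u)+tg(v)$, and the key identity collapsing the cross terms no longer factors cleanly. Here I would exploit the standing assumption that $g$ is onto $K$ to pick, for each $t$, some $w_t\in H$ with $g(w_t)=(1-t)g(u)+tg(v)\in K$, rewrite $\phi(t)=\langle Tw_t, g(w_t)\rangle-2\langle f,g(w_t)\rangle$, and use the $g$-symmetry/$g$-positivity pair to control $\langle Tw_t, g(w_t)\rangle$ in terms of a quadratic in $t$ (up to an $o(t)$ error coming from convexity of $g$ that vanishes at the origin). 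The differentiation at $t=0$ then recovers the same first-order condition. If this technical route is unpalatable, an alternative is to observe that the convexity of $g$ combined with linearity and $g$-positivity of $T$ is precisely what is needed to keep $\langle T w_t, g(w_t)\rangle$ bounded above by the quadratic $(1-t)^2\langle Tu,g(u)\rangle+2t(1-t)\langle Tu,g(v)\rangle+t^2\langle Tv,g(v)\rangle$, which is sufficient for the one-sided derivative argument.
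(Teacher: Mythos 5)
Your proposal is correct and follows essentially the same route as the paper: the necessity direction is the identical perturbation argument along $v_t=u+t(w-u)$ with $t\to 0$, and your key identity $I[g(v)]-I[g(u)]=2\langle Tu-f,g(v)-g(u)\rangle+\langle T(v-u),g(v-u)\rangle$ is exactly the computation the paper packages into its auxiliary function $h(t)$ (whose increase from $h(0)$ to $h(1)$ is the statement that your two nonnegative terms sum to something nonnegative). The only substantive looseness — how to handle a merely convex, non-linear $g$ — is present to the same degree in the paper's own proof, which likewise just asserts "using the linearity (or convexity) of $g$" at the corresponding step.
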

\begin{proof} Let $u \in H, \quad  g(u) \in K  $ satisfy (\ref{2.11}).
Then,  using the $g$-positivity of the operator $T, $ we have
\begin{eqnarray}\label{2.12}
\la Tv, g(v)-g(u) \ra&\geq  & \la f,g(v)-g(u) \ra +\la Tv-Tu,g(v)-g(u) \ra \nonumber \\
& \geq & \la f, g(v)-g(u) \ra, \quad \forall g(v) \in K.
\end{eqnarray}
Since $K$ is a convex set, so for all $t \in[0,1], \quad u, w \in K, \quad
v_t = u +t(w-u) \in K. $ Taking $v = v_t $ in (\ref{2.12}) and using the fact
that $g$ is linear (or convex), we have
\begin{eqnarray}\label{2.13}
\la Tv_t, g(w)-g(u) \ra \geq \la f, g(w)-g(u) \ra .
\end{eqnarray}
We now  define the function
\begin{eqnarray*}
h(t) & = &t \la Tu,g(w)-g(u) \ra +\frac{t^2}{2}\la T(w-u),g(w)-g(u) \ra
\nonumber \\
&& -t\la f, g(w)-g(u) \ra ,
\end{eqnarray*}
such that
\begin{eqnarray*}
h^{\prime}(t) &= &\la Tu,g(w)-g(u) \ra +t \la T(w-u),g(w)-g(u) \ra - \la
f,  g(w)-g(u) \ra \\
& \geq & 0, \quad \mbox{by (\ref{2.13}) . }
\end{eqnarray*}
Thus it follows that $h(t) $ is an increasing function on $[0,1] $ and so
$h(0) \leq h(1) $ gives us
\begin{eqnarray*}
\la Tu,g(u) \ra -2 \la f,g(u) \leq \la Tw,g(w) \ra -2 \la f, g(w) \ra,
\end{eqnarray*}
that is,
\begin{eqnarray*}
I[u] \leq I[w],
\end{eqnarray*}
which shows that $u \in H $ minimizes the functional $I[v], $defined by (\ref{2.9}), on the convex set $K$ in $H. $\\
Conversely, assume that $u \in H $ is the minimum of $I[v] $ on the
convex set $K, $ then
\begin{eqnarray}\label{2.14}
I[u] \leq I[v], \quad \forall v \in H :  g(u) \in K.
\end{eqnarray}
Taking $v = v_t \equiv u +t(w-u) \in K , \forall u, w \in K
$ and $t\in [0,1] $ in (\ref{2.14}), we have
\begin{eqnarray*}
I[u] \leq I[v_t].
\end{eqnarray*}
Using (\ref{2.9}) and the linearity (or convexity ) of $g , $ we obtain
\begin{eqnarray*}
\la Tu,g(w)-g(u) \ra + \frac{t}{2}\la T(w-u),g(w)-g(u) \ra \geq \la
f,g(w)-g(u) \ra ,
\end{eqnarray*}
from which, as $t \longrightarrow 0, $ we have
\begin{eqnarray*}
\la Tu, g(w)-g(u) \ra \geq \la f, g(w)-g(u) \ra, \quad \forall w\in H: g(w) \in K.
\end{eqnarray*}
This completes the proof. \hfill \qquad $\Box $
\end{proof}
\vskip .4pc
 We remark that for $g = I, $ the identity operator, Theorem \ref{theorem2} reduces
to the following well-known result in variational inequalities, which is
due to Stampacchia \cite{172}.\\
\begin{theorem}Let the operator $T$ be linear, symmetric and
positive. Then the minimum of the functional $J[v]$ defined by (\ref{2.9}) on
the convex set $K $ in $H $ can be characterized by the variational
inequality
\begin{eqnarray*}
\la Tu, v-u \ra \geq \la f, v-u \ra, \quad \forall v \in K.
\end{eqnarray*}
\end{theorem}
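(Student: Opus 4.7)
The plan is to obtain this statement as an immediate specialization of Theorem \ref{theorem2} by taking $g \equiv I$, the identity operator. Since $I$ is trivially linear, the hypothesis on $g$ in Theorem \ref{theorem2} is automatic; moreover, $I$-symmetry of $T$ collapses to ordinary symmetry ($\la Tu,v\ra = \la u,Tv\ra$), and $I$-positivity collapses to ordinary positivity ($\la Tu,u\ra \geq 0$), so the hypotheses of the present theorem imply those of Theorem \ref{theorem2}. Similarly the functional $I[v]$ in \eqref{2.9} becomes the standard $J[v] = \la Tv,v\ra - 2\la f,v\ra$ when $g = I$. Substituting $g = I$ into the conclusion \eqref{2.11} immediately yields the variational inequality $\la Tu, v-u\ra \geq \la f, v-u\ra$ for all $v \in K$.

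If I preferred a self-contained argument rather than invoking Theorem \ref{theorem2}, I would simply repeat its proof verbatim with $g = I$: assume $u \in K$ satisfies the variational inequality, then for any $w \in K$ and $t \in [0,1]$ set $v_t = u + t(w-u) \in K$, use linearity, symmetry and positivity of $T$ to expand
\[
J[v_t] - J[u] = 2t\bigl(\la Tu, w-u\ra - \la f, w-u\ra\bigr) + t^2 \la T(w-u), w-u\ra,
\]
and conclude $J[u] \leq J[w]$ by taking $t = 1$ since the first term is $\geq 0$ by hypothesis and the second is $\geq 0$ by positivity. Conversely, if $u$ minimizes $J$ on $K$, take $v_t \in K$ in $J[u] \leq J[v_t]$, divide by $t$, and let $t \to 0^+$.

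There is essentially no obstacle: the present theorem is a textbook corollary of Theorem \ref{theorem2}, and the only point worth flagging is the trivial check that ordinary symmetry and positivity coincide with the $I$-symmetry and $I$-positivity required by the more general statement. I would therefore write the proof as one short paragraph noting that the conclusion follows from Theorem \ref{theorem2} upon setting $g = I$, and leave it at that.
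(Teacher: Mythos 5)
Your proposal is correct and matches the paper exactly: the paper's entire proof is the one-line remark that the result follows from Theorem \ref{theorem2}, i.e., the specialization $g = I$ that you describe. Your additional self-contained expansion of $J[v_t]-J[u]$ is also accurate, but it is not needed for agreement with the paper's argument.
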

\begin{proof}Its proofs follows from Theorem \ref{theorem2}.
\end{proof}

\begin{example}
We now show that  a wide class of nonsymmetric and odd-order obstacle,
unilateral, free, moving and general equilibrium problems arising in pure
and applied sciences  can be formulated in terms of (\ref{2.9}). For
simplicity and to illustrate the applications, we consider the obstacle
boundary value of third order of the type: {\it Find $ u $ such that }

\begin{eqnarray}\label{2.15}
\left. \begin{array}{llll}
-u^{\prime \prime \prime } \geq  f, \hspace{6mm}   &\mbox{on}, \quad \Omega
= & [a,b]  \\ \\
\hspace{4mm} u \geq \psi, \hspace{6mm}    &  \mbox{on}\quad \Omega  = &
[a,b] \\ \\
\hspace{4mm}[-u^{\prime \prime \prime }-f][u - \psi ]= 0, \quad \quad
& \mbox{on}\quad \Omega  = & [a,b] \\ \\
\hspace{4mm} u(a) = 0, \quad u^{\prime }(a) = 0, \quad u^{\prime }(b) =0.
\end{array} \right \},
\end{eqnarray}
where $\Omega = [a,b] $ is a domain, $\psi (x) $ and $f(x) $ are the
given
functions. The function $\psi $ is known as the obstacle function. The
region, where $u(x) = \psi (x), \quad \mbox{for } x \in \Omega $ is called
the contact region (set).
\end{example}
We not that problem (\ref{2.15}) is a generalization of the third-order boundary
value problem
\begin{eqnarray*}
-\frac{d^3u(x)}{dx^3}  =  f(x) \quad  \quad x \in \Omega
\end{eqnarray*}
with boundary condition
\begin{eqnarray*}
u(0) \quad = \quad u^{\prime }(a) \quad = \quad u^{\prime }(b) = 0,
\end{eqnarray*}
which arises from a similarity solution of the so-called barotropic
quasi-geostrophic potential vorticity equation for one layer ocean
circulation.  For the formulation of the equation, see \cite{91} and the
references therein.\\

To study the problem (\ref{2.15}) in the general framework of the general
variational inequality, we define
\begin{eqnarray*}
K = \{ u \in H^{2}_{0}(\Omega ): u(x) \geq \psi (x)  \mbox{on } \Omega \},
\end{eqnarray*}
which is closed convex set in $H^{2}_{0}(\Omega ). $ For the definition
and properties of the spaces $H^{m}_{0} (\Omega ) , $ see \cite{50}.\\

Using the technique of \cite{39, 176}, we can easily show that
the energy functions associated with the problem (\ref{2.15}) is
\begin{eqnarray}\label{2.16}
I[v] & = & \int _{a}^{b}
\left(-\frac{d^3v}{dx^3}\right)\left(\frac{dv}{dx}\right)dx-2\int
_{a}^{b}f(x)\left(\frac{dv}{dx}\right)dx, \quad \mbox{for all }
\frac{dv}{dx} \in K \nonumber \\
& = &
\int_{a}^{b}\left(\frac{d^2v}{dx^2}\right)\left(\frac{d^2v}{dx^2}\right)dx-2\int
_{a}^{b}f(x)\left(\frac{dv}{dx}\right)dx \nonumber \\
& = & \la Tv,g(v) \ra -2\la f, g(v) \ra,
\end{eqnarray}
where
\begin{eqnarray}\label{2.17}
\la  Tu, g(v)  \ra  = \int_{a}^{b}\left(\frac{d^2u}{dx^2}\right)\left(\frac{d^2v}{dx^2}\right)dx,
\end{eqnarray}
and
\begin{eqnarray*}
\la f, g(v) \ra = \int _{a}^{b}f(x)\left(\frac{dv}{dx}\right)dx.
\end{eqnarray*}
Here $$g(u) = \frac{du}{dx } \quad \mbox{and}\quad Tu= \frac{-d^3u}{d^3x}$$ are the linear operators.\\
It is clear that the operator $T $ defined by the relation (\ref{2.17}) is
$g$-symmetric, $g$-positive and linear. Also we note that the operator $ g
= \frac{d}{dx} $ is a linear operator. Consequently all the assumptions of
Theorem \ref{theorem2} are satisfied. Thus it follows from Theorem \ref{theorem2} that the
minimum of the functional $I[v], $ defined by (\ref{2.16}) is equivalent to
finding  $u \in H $ such that $g(u) \in K $ and the inequality (\ref{2.5})  holds.\\
In fact, we conclude the problems equivalent to (\ref{2.15}) are:\\
\noindent{\it  The Variational Problem. }
Find $ u \in H^{2}_{0}(\Omega ), $ which gives the minimum value to the
functional
\begin{eqnarray*}
I[v] = \la Tv,g(v) \ra - 2 \la f,g(v) \ra \quad \mbox{on the convex set }
\quad K.
\end{eqnarray*}
\noindent{\it The Variational Inequality (Weak) Problem.  }
Find $u \in H^{2}_{0}(\Omega )$ such that $g(u) \in K $ and
\begin{eqnarray*}
\la Tu, g(v)-g(u) \ra \geq \la f, g(v)-g(u) \ra , \quad\forall\quad g(v) \in K.
\end{eqnarray*}
\subsection{Quasi variational inequalities}
We now show that  quasi variational inequalities are  a special
case of general variational inequalities (\ref{2.5}). If the
convex set $K$ depends upon the solution explicitly  or implicitly, then
variational inequality problem is known as the quasi variational
inequality. For a given operator $T: H \lrt H,$ and a point-to-set
mapping $K : u \lrt K(u), $ which associates a closed convex-valued set
$K(u)$ with any element $u $ of $H,$ we consider the problem of finding $u
\in K(u)$ such that
\begin{eqnarray}\label{2.18a}
\la Tu, v- u \ra \q \geq 0,   \q \forall v \in K(u)
\end{eqnarray}
Inequality of type (\ref{2.18a}) is called the  quasi variational inequality.
For the formulation, applications, numerical methods and sensitivity
analysis of the quasi variational inequalities, see \cite{15,22,84,85,102,139,146}
and the references therein.\\
We can rewrite the equation (\ref{2.18a}), for $\rho > 0, $ as
\begin{eqnarray*}\label{2.18}
0 & \leq &\la \rho Tu +u-u, v- u \ra  \\
&=& \la u-(u-\rho Tu),v-u \ra,  \quad \forall v \in K(u).
\end{eqnarray*}
which is equivalent ( using Lemma \ref{lemma1})  to finding $ u \in K(u) $ such that
\begin{eqnarray}\label{2.19}
u = P_{K(u)}[u-\rho Tu].
\end{eqnarray}
In many important applications, the convex-valued  set $K(u)$ is of the
form
\begin{eqnarray}\label{2.20}
K(u)= m(u) +K,
\end{eqnarray}
where $m $ is a point-to-point mapping and $K$ is a closed convex set.\\ \\
From (\ref{2.19}) and (\ref{2.20}), we see that problem (\ref{2.18a}) is equivalent to
\begin{eqnarray*}
u & = &P_{K(u)}[u-\rho Tu] =P_{m(u)+K}[u-\rho Tu]  \\
& = & m(u)+P_K[u-m(u)-\rho Tu]
\end{eqnarray*}
which implies that
\begin{eqnarray*}
g(u)=P_K[g(u)-\rho Tu ] \q  \mbox{with} \q g(u)= u-m(u),
\end{eqnarray*}
which is equivalent to the general variational inequality (\ref{2.5}) by
an application of Lemma \ref{lemma1}. We have shown that the quasi variational
inequalities (\ref{2.18a}) with the convex-valued set $K(u)$ defined by (\ref{2.20})
are equivalent to the general variational inequalities (\ref{2.5}).\\
We now recall the well known concepts.

\begin{definition}\label{def7} For all $u,v \in H$, the operator $T: H
\longrightarrow H$ is said to be
\noindent{\bf (i).} {\it $g$-monotone}, if
$$
\la Tu-Tv,g(u)-g(v)\ra  \geq 0.
$$
{\bf (ii).}  {\it $g$-pseudomonotone}, if
$$
\la Tu,g(v)-g(u)\ra  \geq 0 \quad  \mbox{implies} \quad \la Tv,
g(v)-g(u)\ra \geq 0.
$$

\end{definition}
For $g \equiv I,$ Definition \ref{def7} reduces to the usual definition of
monotonicity and pseudomonotonicity of the operator $T.$ \ Note that
monotonicity implies pseudomonotonicity  but the converse is not true,
see [35].\\
\begin{definition} \label{def8} A function $ F $ is said to be strongly
general convex on the general convex set $K $ with modulus $\mu > 0, $ if, for all
$g(u),g(v)  \in K_g, t \in [0,1], $
\begin{eqnarray*}
F(g(u)+t(g(v)-g(u)) \leq (1-t)F(g(u)) +tF(g(v))-t(1-t)\mu \|g(v)-g(u)\|^2.
\end{eqnarray*}
\end{definition}
For  differentiable strongly  general convex function $F,$  the following statements are equivalent.
\begin{eqnarray*}
&& 1.\quad F(g(v))-F(g(u)) \geq  \la F^{\prime }(g(u)), g(v)-g(u) \ra + \mu
\|g(v)-g(u)\|^2\\
&& 2. \quad F^{\prime }(g(u))-F^{\prime }(g(v)), g(u)-g(v) \ra \geq  \mu
\|g(v)-g(u)\|^2.
\end{eqnarray*}
 It is well known that the general convex functions are not
convex function, but they have some nice properties which the convex
functions have. Note that, for $g = I, $ the general convex functions are
convex functions and definition (\ref{def8}) is the well known result in convex
analysis.

\section{Projection Methods}
In this section, we use the fixed point formulation to suggest and analyze some new
implicit methods for solving the variational inequalities.
Using Lemma \ref{lemma1}, one can show that the general variational
inequalities are equivalent to the fixed point problems.
\begin{lemma}\label{lemma2}\emph{\cite{87}}
The function $u\in H : g(u) \in K$ is a solution of the general variational inequalities
(\ref{2.5}), if and only if, $u\in H: g(u) \in K$ satisfies the relation
\begin{eqnarray}\label{2.21}
g(u)= P_K[g(u)-\rho Tu],
\end{eqnarray}
where $P_K $ is the projection  operator and $\rho>0$ is a constant.
\end{lemma}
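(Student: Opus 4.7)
The plan is to reduce the statement directly to Lemma \ref{lemma1} by an appropriate substitution, exploiting the fact that (\ref{2.5}) is already in the form of the projection characterization once we rewrite $0$ as $g(u) - g(u)$.

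First I would observe that since $\rho > 0$ is a fixed positive constant, the inequality (\ref{2.5}) is equivalent to
\begin{eqnarray*}
\la \rho Tu, g(v) - g(u) \ra \geq 0, \quad \forall v \in H : g(v) \in K.
\end{eqnarray*}
Adding and subtracting $g(u)$ inside the first slot of the inner product, this becomes
\begin{eqnarray*}
\la g(u) - (g(u) - \rho Tu), \, g(v) - g(u) \ra \geq 0, \quad \forall v \in H : g(v) \in K.
\end{eqnarray*}
Since $g$ is assumed onto $K$, as $v$ ranges over $\{v \in H : g(v) \in K\}$ the element $g(v)$ ranges over all of $K$. So the above is exactly the statement that the element $g(u) \in K$ satisfies the variational inequality (\ref{2.8}) of Lemma \ref{lemma1} with the choice $z := g(u) - \rho Tu$ and with the role of ``$u$'' played by $g(u)$.

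By Lemma \ref{lemma1}, this inequality is equivalent to
\begin{eqnarray*}
g(u) = P_K[g(u) - \rho Tu],
\end{eqnarray*}
which is the required fixed-point relation (\ref{2.21}). The reverse direction is obtained by reading the same chain of equivalences backwards: starting from $g(u) = P_K[g(u) - \rho Tu]$, Lemma \ref{lemma1} gives the displayed inequality, which after cancellation yields $\la \rho Tu, g(v) - g(u) \ra \geq 0$, and division by $\rho > 0$ recovers (\ref{2.5}).

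The argument has essentially no obstacle once Lemma \ref{lemma1} is in hand; the only care needed is the bookkeeping to ensure that the quantification ``for all $g(v) \in K$'' in (\ref{2.5}) correctly matches the quantification ``for all $v \in K$'' in Lemma \ref{lemma1}, which is precisely where the standing assumption that $g$ is onto $K$ is used. No monotonicity, continuity, or differentiability of $T$ or $g$ is needed for the equivalence itself.
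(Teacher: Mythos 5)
Your proof is correct and follows exactly the route the paper intends: it invokes Lemma \ref{lemma1} with $z = g(u)-\rho Tu$ and with $g(u)$ playing the role of the point in $K$, which is the same projection argument the paper sketches (and carries out explicitly for the quasi variational inequality case). Your remark that the surjectivity of $g$ onto $K$ is what makes the quantifiers match is a correct and worthwhile observation consistent with the paper's standing assumption.
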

Lemma \ref{lemma2} implies that the general  variational inequality
(\ref{2.5}) is equivalent to the fixed point problem (\ref{2.21}).
This equivalent fixed point formulation  was used to suggest some implicit iterative
methods for solving the general variational inequalities. One uses the equivalent fixed point formulation(\ref{2.21}) to suggest the following iterative
methods for solving general variational inequalities (\ref{2.5}).\\

\begin{algorithm}\label{alg1} For a given $u_{0}\in H$, compute ${u_{n+1}}$ by the iterative scheme
\begin{eqnarray*}
u_{n+1} = u_n- g(u_n)+  P_K[g(u_n)-\rho Tu_n], \quad n=0,1,2,...
\end{eqnarray*}
which is known as the projection method and has been studied extensively.
\end{algorithm}

\begin{algorithm}\label{alg2} For a given $u_{0}\in H$, compute ${u_{n+1}}$ by the iterative
scheme
\begin{eqnarray*}
u_{n+1}  = u_n -g(u_n)+ P_K[g(u_n)-\rho Tg^{-1}P_K[g(u_n)-\rho Tu_{n}]], \quad n=0,1,2,...
\end{eqnarray*}
\end{algorithm}
which can be viewed as the extragradient method, which was suggested and analyzed by Koperlevich\cite{60} for solving the classical variational inequalities.
Noor \cite{114}has proved that the convergence of the extragradient method for pseudomonotone operators.\\

\begin{algorithm}\label{alg3} For a given $u_{0}\in H$, compute ${u_{n+1}}$ by the iterative
scheme
\begin{eqnarray*}
u_{n+1}= u_n -g(u_n)+ P_K[g(u_{n+1})-\rho Tu_{n+1}], \quad n=0,1,2,...
\end{eqnarray*}
which is known as the modified projection method and has been studied extensively, see Noor \cite{103}.
\end{algorithm}
We can rewrite the equation (\ref{2.21})  as:
\begin{eqnarray*}
g(u)= P_K[g(\frac{u+u}{2}) -\rho Tu].
\end{eqnarray*}
This fixed point formulation was used to suggest the following implicit method for solving variational inequalities, which is due to Noor et al\cite{110,116}. We used this equivalent formulation to suggest implicit methods for general variational inequality (\ref{2.5}).\\

\begin{algorithm}\label{alg4} For a given $u_{0}\in H$, compute ${u_{n+1}}$ by the iterative
scheme
\begin{eqnarray*}
 u_{n+1} = u_n- g(u_n)+ P_K[g\big(\frac{u_n+u_{n+1}}{2}\big)-\rho Tu_{n+1}], \quad n=0,1,2,...
\end{eqnarray*}
\end{algorithm}
For the implementation  of this Algorithm, one can  use the predictor-corrector technique
 to suggest the following two-step iterative method for solving general variational inequalities.\\

\begin{algorithm}\label{alg5} For a given $u_{0}\in H$, compute ${u_{n+1}}$ by the iterative
scheme
\begin{eqnarray*}
g(y_n) &= & P_K[g(u_n)-\rho Tu_n] \\
 u_{n+1} &=& u_n-g(u_n)+P_K[g\big (\frac{y_n+u_n}{2}\big)-\rho Ty_{n}],\quad \lambda \in [0,1], \quad n=0,1,2,...
\end{eqnarray*}
which is a two-step iterative  method:
\end{algorithm}
From  the equation (\ref{2.21}), we have
\begin{eqnarray*}
g(u) = P_K[g( u)  -\rho T(\frac{u+u}{2})].
\end{eqnarray*}
This fixed point formulation is used to suggest the implicit method for solving the variational inequalities as\\

\begin{algorithm}\label{alg6} For a given $u_{0}\in H$, compute ${u_{n+1}}$ by the iterative
scheme
\begin{eqnarray*}
 u_{n+1} = u_n-g(u_n)+ P_K[g(u_n)-\rho T(\frac{u_n+u_{n+1}}{2})], \quad n=0,1,2,....
\end{eqnarray*}
which is another implicit method, see Noor et al. \cite{148}.
\end{algorithm}
To implement this implicit method, one can use the predictor-corrector technique to rewrite Algorithm 6 as equivalent two-step iterative method.\\

\begin{algorithm}\label{alg7} For a given $u_{0}\in H$, compute ${u_{n+1}}$ by the iterative
scheme
\begin{eqnarray*}
g(y_n)&= & P_K [g(u_n)-\rho Tu_n], \\
 u_{n+1} &= &u_n-g(u_n)+  P_K[g(u_n)-\rho T(\frac{u_n+y_n}{2})], \quad n=0,1,2,....
\end{eqnarray*}
which  is known as the mid-point implicit method for solving general variational inequalities.
\end{algorithm}
For the convergence analysis and other aspects of Algorithm \ref{alg4}, see Noor et al \cite{148}.\\
It is obvious that Algorithm \ref{alg4} and Algorithm 6 have been suggested using different variant of the fixed point  formulations  (\ref{2.21}). It is natural to combine these fixed point formulation to suggest a hybrid implicit method for solving the general variational inequalities and related optimization problems, which is the main motivation of this paper.\\
One can rewrite the equation (\ref{2.21})  as
\begin{eqnarray*}
g(u)= P_K[g \big(\frac{u+u}{2}\big) -\rho T(\frac{u+u}{2})].
\end{eqnarray*}
This equivalent fixed point formulation enables to suggest the following  method for solving the general variational inequalities.\\
\begin{algorithm} \label{alg8} For a given $u_{0}\in H$, compute ${u_{n+1}}$ by the iterative
scheme
\begin{eqnarray*}
 u_{n+1} =u_n-g(u_n)+ P_K[g\big(\frac{u_n+u_{n+1}}{2}\big)-\rho T(\frac{u_n+ u_{n+1}}{2})], \quad n=0,1,2,....
\end{eqnarray*}
which is an implicit method.
\end{algorithm}

We would like to emphasize that Algorithm \ref{alg8} is an implicit method. To implement the implicit method, one uses the predictor-corrector technique.
 We use Algorithm \ref{alg1} as the predictor and Algorithm 8 as corrector. Thus, we obtain a new two-step method for solving general variational inequalities.\\

\begin{algorithm}\label{alg9} For a given $u_{0}\in H$, compute ${u_{n+1}}$ by the iterative scheme
\begin{eqnarray*}
g(y_n) &= & P_K[g(u_n)-\rho Tu_n] \\
 u_{n+1} & =& u_n-g(u_n)+P_K[g\big(\frac{y_{n}+u_n}{2}\big)-\rho T\big(\frac{y_{n}+u_n}{2}\big)], \quad n=0,1,2,...
 \end{eqnarray*}
which is two step method.
\end{algorithm}
     For constants $\lambda , \xi \in [ 0.1], $ we can rewrite the equation (\ref{2.21})  as:
\begin{eqnarray*}
g(u)= P_K\big[(1-\lambda)g(u)+ \lambda g(u)) -\rho T((1-\xi )u+\xi u)\big].
\end{eqnarray*}
This equivalent fixed point formulation enables to suggest the following  method for solving the general variational inequalities.\\
\begin{algorithm}\label{alg10} For a given $u_{0}\in H$, compute ${u_{n+1}}$ by the iterative
scheme
\begin{eqnarray*}
g(u_{n+1})= P_K\big[(1-\lambda)g(u_n)+ \lambda g(u_{n+1})) -\rho T((1-\xi )u_n+\xi u_{n+1})\big]. \quad n=0,1,2,...
\end{eqnarray*}
which is an implicit method.
\end{algorithm}
Using the prediction-correction technique, Algorithm 10 can be written in the  following form.\\

\begin{algorithm}\label{alg11} For a given $u_{0}\in H$, compute ${u_{n+1}}$ by the iterative
scheme.
\begin{eqnarray*}
g(y_n) &= & P_K[g(u_n)-\rho Tu_n] \\
 g(u_{n+1})&=& P_K\big[(1-\lambda)g(u_n)+ \lambda g(y_{n})) -\rho T((1-\xi )u_n+\xi y_{n})\big], \quad n=0,1,2,..
 \end{eqnarray*}
 \end{algorithm}
which is two step method.
\begin{remark}
It is worth mentioning that Algorithm \ref{alg11} is a unified ones. For suitable and appropriate choice of the constant $\lambda $ and $\xi, $ one can obtain a wide class of iterative methods for solving general variational inequalities and related optimization problems.
\end{remark}

\section{Wiener-Hopf Equations Technique}

In this Section, we consider the problem of the general Wiener-Hopf equations. To be more precise, let
$Q_K= I-P_K,$ where $I$
is the identity operator and $P_K$ is the projection of $H$ onto $K.$ For
given nonlinear operators $T,g: H \rightarrow H,$  consider the problem of
finding $z \in H$ such that
\begin{eqnarray}\label{2.22}
\rho Tg^{-1}P_Kz + Q_Kz = 0,
\end{eqnarray}
provided $g^{-1} $ exists. Equations of the type (\ref{2.22}) are called the {\it
general Wiener-Hopf equations,} which were introduced and studied by
Noor\cite{90,91}. For $g = I,$ we obtain the original Wiener-Hopf equations,
which were introduced and studied by Shi \cite{167} and Robinson \cite{165} in
different settings independently.  Using the
projection operators technique,  one can show that the general variational
inequalities are equivalent to the general  Wiener-Hopf equations. This equivalent
alternative formulation has played a fundamental and important role in
studying various aspects of variational inequalities. It has been
shown that Wiener-Hopf equations are more flexible and provide a unified
framework to develop some efficient and powerful numerical techniques for
solving variational inequalities and related optimization problems.\\
\begin{lemma}\label{lemma3}
The element $u\in H: g(u) \in K$ is a solution of the general variational inequality
(\ref{2.5}), if and only if $z\in H$ satisfies the Wiener-Hopf equation (\ref{2.22}), where
\begin{eqnarray}
g(u)&=&P_{K}z,   \label{eq4.2a}\\
 z&=&g(u)-\rho Tu,\label{eq4.3a}
\end{eqnarray}
where $\rho>0$ is a constant.
\end{lemma}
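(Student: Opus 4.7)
The plan is to derive Lemma \ref{lemma3} directly from the fixed point characterization in Lemma \ref{lemma2} together with the definition $Q_K = I - P_K$; essentially the Wiener--Hopf equation (\ref{2.22}) is just an algebraic rearrangement of the fixed point equation (\ref{2.21}).

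For the forward direction, I would start by assuming $u \in H$ with $g(u) \in K$ solves the general variational inequality (\ref{2.5}). By Lemma \ref{lemma2}, this is equivalent to $g(u) = P_K[g(u) - \rho Tu]$. I would then define $z := g(u) - \rho Tu$, which gives (\ref{eq4.3a}) immediately, and hence $g(u) = P_K z$, giving (\ref{eq4.2a}). Using the definition $Q_K z = z - P_K z$, compute
\begin{eqnarray*}
Q_K z = z - P_K z = (g(u) - \rho Tu) - g(u) = -\rho Tu = -\rho T g^{-1} P_K z,
\end{eqnarray*}
which rearranges to $\rho T g^{-1} P_K z + Q_K z = 0$, i.e., the general Wiener--Hopf equation (\ref{2.22}).

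For the converse, I would assume $z$ solves (\ref{2.22}) and define $u$ by $g(u) := P_K z$, which is legitimate since $g$ is onto $K$ and $g^{-1}$ is assumed to exist on the relevant range. From $\rho T g^{-1} P_K z + Q_K z = 0$, I substitute $g^{-1} P_K z = u$ and $Q_K z = z - P_K z = z - g(u)$ to obtain $\rho Tu + z - g(u) = 0$, i.e., $z = g(u) - \rho Tu$, which is (\ref{eq4.3a}). Combining the two relations yields $g(u) = P_K z = P_K[g(u) - \rho Tu]$. Appealing again to Lemma \ref{lemma2}, this fixed point identity is equivalent to $u$ solving the general variational inequality (\ref{2.5}).

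No step here is a real obstacle; the only subtlety is a book-keeping one, namely making sure that the hypothesis ``$g^{-1}$ exists'' in the statement of the Wiener--Hopf equation is used exactly where needed (to identify $u = g^{-1} P_K z$ unambiguously in the converse), and that the definition $Q_K = I - P_K$ is invoked consistently. Everything else is a one-line substitution using Lemma \ref{lemma2}.
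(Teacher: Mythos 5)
Your proof is correct and follows exactly the route the paper relies on: it reduces the equivalence to the fixed-point characterization $g(u)=P_K[g(u)-\rho Tu]$ of Lemma \ref{lemma2} and then observes that the Wiener--Hopf equation (\ref{2.22}) is an algebraic rearrangement of that identity via $Q_K=I-P_K$ and the substitutions (\ref{eq4.2a})--(\ref{eq4.3a}). The paper does not spell out these steps, but the computation immediately following the lemma (deriving $z=P_Kz-\rho TP_Kz$ from (\ref{eq4.2a}) and (\ref{eq4.3a})) is precisely your argument run in reverse, so there is nothing to add.
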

From Lemma \ref{lemma3}, it follows that the variational
inequalities (\ref{2.5}) and the Wiener–Hopf equations (\ref{2.22})
are equivalent. This alternative equivalent formulation is
used to suggest and analyze a wide class of efficient and robust
iterative methods for solving general variational inequalities and related
optimization problems, see \cite{90,91,99,109} and the references
therein.\\
We use the general Wiener-Hopf equations (\ref{2.22}) to suggest some new
iterative methods for solving the general variational inequalities.
 From (\ref{eq4.2a}) and (\ref{eq4.3a}),
\begin{eqnarray*}
z& = &P_{K}z-\rho TP_{K}z\nonumber \\
&=& P_{K}[g(u)-\rho Tu]-\rho Tg^{-1}P_{K}[g(u)-\rho Tu].
\end{eqnarray*}
Thus, we have
\begin{eqnarray*}
g(u)= \rho Tu+\big[P_{K}[g(u)-\rho Tu]-\rho Tg^{-1}P_{K}[g(u)-\rho Tu +P_{K}[g(u)-\rho Tu]-g(u)].
\end{eqnarray*}
Consequently, for a constant $\alpha >0, $ we have
\begin{eqnarray}\label{eq3.4}
g(u)&=& (1-\alpha)g(u)
+ \alpha \{P_K[P_{K}[g(u)-\rho Tu] +\rho Tu-\rho Tg^{-1}P_{K}[g(u)-\rho Tu]\nonumber \\
&&+P_{K}[g(u)-\rho Tu]-g(u)]\}\nonumber \\
&=& (1-\alpha)g(u)+ \alpha\{ P_K[g(y)-\rho Ty]+P_{K}[g(y)-\rho Tu]-g(u)]\},
\end{eqnarray}
where
\begin{eqnarray}\label{eq3.5}
g(y)= P_K[g(u)-\rho Tu].
\end{eqnarray}
Using (\ref{eq3.4}) and (\ref{eq3.5}), we can suggest the following
new predictor-corrector method for solving variational inequalities.\\
\begin{algorithm}\label{alg12}
For a given $u_{0}\in H$, compute ${u_{n+1}}$ by the iterative
scheme
\begin{eqnarray*}
g(y_n) &= & P_K[g(u_n)-\rho Tu_n] \nonumber \\
g( u_{n+1}) &=&  (1-\alpha_n)g(u_n) + \alpha_n\bigg\{P_K[g(y_n)-\rho Ty_n+g(y_n) -(g(u_n)- \rho Tu_n)]\bigg\}.
\end{eqnarray*}
\end{algorithm}
Algorithm 12 can be rewritten in the following equivalent
form:
\begin{algorithm}\label{alg13}
For a given $u_{0}\in H$, compute ${u_{n+1}}$ by the iterative scheme
\begin{eqnarray*}
 u_{n+1} &=& (1-\alpha_n)u_n \nonumber \\
 &&+ \alpha _n\{P_K[P_K[g(u_n)-\rho Tu_n]-\rho Tg^{-1}P_K[g(u_n)-\rho Tu_n]\nonumber \\
 &&+P_{K}[g(u_n)-\rho Tu_n)-(g(u_n)-\rho Tu_n])\},
\end{eqnarray*}
\end{algorithm}
which is an explicit iterative method and appears to be  a new one.\\
If $\alpha_n =1, $ then Algorithm 13 reduces to\\
\begin{algorithm}\label{alg14} For a given $u_{0}\in H$, compute ${u_{n+1}}$ by the iterative
scheme
\begin{eqnarray*}
g(y_n)&= & P_K[g(u_n)-\rho Tu_n] \\
 u_{n+1} &=& P_K[g(y_n)-\rho Ty_n+ g(y_n)-(g(u_n)-\rho Tu_n])\}, \quad n=0, 1,2, ...,
\end{eqnarray*}
\end{algorithm}
which appears to be a new one.\\

We now consider  another  Algorithm  for solving
general variational inequalities (\ref{2.5}.  We also include some computational experiments
of these special cases. See \cite{122,151,152} for further details.

\begin{algorithm}\label{alg15} For a given $u_0 \in K, $ compute
\begin{eqnarray*}
g(z_n ):=P_K[g(u_n) - Tu_n].
\end{eqnarray*}
If $\|R(u_n))\|=0$,  where $R(u_n)= g(u_n)-P_K[g(u_n) - Tu_n].$
stop; otherwise compute
\begin{eqnarray*}
 g(y^n):=(1-\eta_n)g(u^n)+\eta_n g(z^n),
\end{eqnarray*}
where $\eta_n=\gamma^{m_n}$ with $m_n $ being the smallest nonnegative
integer  satisfying
\begin{eqnarray*}
 \langle T(u^n)-T(u^n-\gamma^mR(u_n)),R(u_n)\rangle \leq \sigma
\|R(u_n)\|^2.
\end{eqnarray*}
 Compute
\begin{eqnarray*}
g(u_{n+1}) :=P_K[g(u_n) +\alpha_n d_n], \quad n =0,1,2, \ldots
\end{eqnarray*}
where
\begin{eqnarray*}
d_n & = & -(\eta_nR(u_n)-\eta_nT(u_n)+T(y_n)) \\
\alpha_n &= & \frac{\eta_n\langle R(u_n),R(u_n)-
T(u_n)+T(y_n)\rangle}{\|d_n\|}.
\end{eqnarray*}
\end{algorithm}

To obtain a larger decrease of the distance from the next iterative point
to the solution set, we consider the following optimization problem
$$ \max \{ \phi_k(\alpha):  \alpha  \geq 0 \}. $$
Following the technique  of Wang et al\cite{181}, one can show that solution to the above optimization
problem is just the root, denoted by $\bar{\alpha_n}. $ \\ \\
If we choose $\overline\alpha_n$ as step size in Algorithm 15,
then we obtain another convergent algorithm. Obviously,
$\overline\alpha_n$ guarantees that the distance between the new
iterative point and the solution set has a larger decrease, so we
call $\alpha_n$ the basic step and $\overline \alpha_n$ the
optimal step. However, in practice, if $ K $ does not possess any
special structure, it is much expensive to compute
$\overline\alpha_n$. That is, we need to find a simple way to
compute the projection $P_K[u_n+\overline\alpha_n d_n]$.
Following the proof of Lemma 4.2 in \cite{187}, we can show that
$u_n(\overline \alpha_n)=P_{K\cap H_n}[u_n+\alpha_n d_n] $, where
$$H_n =\{u \in R^n~|~\eta_n\langle
R(u_n),R(u_n)-T(u_n)+T(y_n)\rangle+\langle u_n-u ,d_n\rangle=0\}.$$

Thus, we can obtain our improved double-projection method
for solving general variational inequalities.

\begin{algorithm}\label{alg16} For a  given $u_0 \in K, $ compute
\begin{eqnarray*}
g(z_n):=P_K[g(u_n)- T(u_n)]
\end{eqnarray*}
If $\|R(u_n)\|=0$, stop; otherwise compute
\begin{eqnarray*}
g(y_n ):=(1-\eta_n)g(u_n)+\eta_n g(z_n),
\end{eqnarray*}
 where $\eta_n=\gamma^{m_n}$ with $m_n$ being the smallest nonnegative
integer  satisfying\begin{eqnarray*}
 \langle T(u_n)-T(u_n-\gamma^mR(u_n)),R(u_n)\rangle \le \sigma
\|R(u_n)\|^2.
\end{eqnarray*}
 Compute
\begin{eqnarray*}
  g(u_{n+1}) =P_{H_n\cap K}[u_n+\alpha_n d_n], \quad n =0, 1,2, \ldots
\end{eqnarray*}
 where
\begin{eqnarray*}
 d_n &  = & -(\eta_nR(u_n)-\eta_nT(u_n)+T(y_n))\\
\alpha_n & = &\frac{\eta_n\langle
R(u_n),R(u_n)-T(u_n)+T(y_n)\rangle}{\|d_n\|}.
\end{eqnarray*}
\end{algorithm}
Notice that at each iteration in Algorithm 16, the latter projection
region is different from the former. More precisely, the latter projection
region is an intersection of the domain set $K $ and a hyperplane, so it
does not increase computation cost if $K$ is a polyhedral.\\

For $g=I, $ we now   give some numerical experiments for Algorithms 15  and 16 and
some comparison with other double-projection methods. Throughout the
computational experiments, the parameters used are
set as $\sigma=0.5,\gamma=0.8,$ and we use  $\|R(u_n)\| \leq
10^{-7}$ as stopping criteria. All computational results were
undertaken on a PC-II by MATLAB. We use symbol $e$ to denote the
vector whose components are all ones.\\

\begin{example}\label{example2} Consider the mapping $T: R^n\to R^n$  defined
by
$$F(x_1,x_2,x_3,x_4)=\left(\begin{array}{l}-x_2+x_3+x_4\\
                     x_1-(4.5x_3+2.7x_4)/(x_2+1)\\
                     5-x_1-(0.5x_3+0.3x_4)/(x_3+1)\\
                     3-x_1
                     \end{array}\right).$$
with the domain set
\begin{eqnarray*}
K =\{x\in R^n_+~|~e^\top x=1\},
\end{eqnarray*}
\end{example}

\begin{example}\label{example3} This example was tested by Sun [173.174].
Let $T(x)=Mx+q$ , where
$$M=\left(\begin{array}{ccccc}4&-1&0&\cdots&0\\
                     -1&4&-1&\cdots&0\\
                     \vdots&\vdots&\vdots&\vdots\\
                     0&0&0&\cdots&-1\\
                     0&0&0&\cdots&4\\
                     \end{array}\right),~~~q=\left(\begin{array}{c}-1\\
                     -1\\
                     -1\\
                     \vdots\\
                     -1
                     \end{array}\right)$$
with the domain set
\begin{eqnarray*}
K=\{x\in R^n_+~|~x_i\le1,i=1,2\cdots,n\}.
\end{eqnarray*}
It is easy to see that $T$ is strongly monotone on $R^n$.
\end{example}
\begin{example}\label{example4}  Define $T(x)=Mx+q$ , where
$$M=\mbox{diag}(1/n,2/n,\cdots,1),~~~q=(-1,-1,\cdots,-1)^\top.$$
with the domain set
%\begin{eqnarray*}
$K=\{x\in R^n_+~|~x_i\le1,i=1,2\cdots,n\}, \quad  \mbox{see [137].}$
%\end{eqnarray*}
\end{example}
Again $T$ is strongly monotone on $ K $. The corresponding strongly
monotonicity modulus depends on the dimension $n$ and approaches
zeros when $n$ tends to infinity. Obviously, $x=e$ is its unique
solution. We choose the starting point $u_0=e$  for Example  \ref{example2} and choose $u
=(0,\cdots,0)^\top$ as starting point for Examples \ref{example3} and \ref{example4}
for different dimensions $n$. For double-projection methods [124,139,153
],  there always exist two step size rules just as in Algorithms 15 and
16. In the following, we give numerical comparison for these methods
using two different steps. The numerical results for double-
projection methods using the basic step for Examples \ref{example2}, \ref{example3},\ref{example4}  are
listed in Table 1, and the numerical results for double projection
methods using the optimal step for Examples \ref{example2}, \ref{example3},\ref{example4}  are listed in
Table 2 (the symbol ``$\setminus$" denotes the number of
iterations exceeds 1000 times).\\

$$\mbox{\bf Table 1. Numbers experience for Algorithm 15}$$
$$\begin{tabular}{|c|c|c|c|c|}
\hline Problem&Alg.[48]&Alg.[174]&Alg. 15 & Alg.[173]\\
\hline 4.1(n=4)&$\setminus$&$\setminus$&$\setminus$&$\setminus$\\
\hline 4.2$(n=10)$&47&57&47&117\\
\hline 4.2$(n=20)$&50&60&50&121\\
\hline 4.2$(n=50)$&52&62&52&125\\
\hline 4.2$(n=100)$&53&64&53&127\\
\hline 4.3($n=10$)&$\setminus$&$\setminus$&$\setminus$&$\setminus$\\
\hline 4.3$(n=20)$&$\setminus$&$\setminus$&$\setminus$&$\setminus$\\
\hline 4.3$(n=50)$&$\setminus$&$\setminus$&$\setminus$&$\setminus$\\
\hline 4.3$(n=100)$&$\setminus$&$\setminus$&$\setminus$&$\setminus$\\
\hline
\end{tabular}
$$
\vskip .2pc

$$\mbox{\bf Table 2. Numbers experience for Algorithm 16}$$
$$\begin{tabular}{|c|c|c|c|}
\hline Problem&Alg.[150]&Alg. 16& Alg.\cite{173}]\\
\hline 4.1(n=4)&49&96&109\\
\hline 4.2$(n=10)$&54&44&111\\
\hline 4.2$(n=20)$&57&47&115\\
\hline 4.2$(n=50)$&59&49&119\\
\hline 4.2$(n=100)$&60&50&121\\
\hline 4.3($n=10$)&73&35&156\\
\hline 4.3$(n=20)$&79&37&166\\
\hline 4.3$(n=50)$&85&40&180\\
\hline 4.3$(n=100)$&90&43&192\\
\hline
\end{tabular}
$$

\bigskip

Obviously, optimal step $\overline\alpha_n$ is better than the basic step
$\alpha_n$ for any direction. Compared with other double
projection methods, Algorithm 16 also shows a better behavior.  From
Table 1 and Table 2, it is clear that our new methods are as efficient as
the methods of  Solodov and Svaiter \cite{170,171,173}.
This shows that our Algorithm 15 and Algorithm 16 can be considered as
practical alternative to the extragradient and other modified projection
methods. The comparison of  new methods developed in this paper with the
recent methods is an interesting problem for  future research.

\section{ Dynamical Systems Technique}
In this section, we consider the projected dynamical systems
associated with variational inequalities. We investigate  the
convergence analysis of these new methods involving only the
monotonicity of the operator. \\
We now define the residue vector $R(u)$ by the relation
\begin{eqnarray}\label{5.1}
R(u)=g(u)-P_K[g(u)-\rho Tu].
\end{eqnarray}
Invoking Lemma \ref{lemma3}, one can easily conclude that $u\in H: g(u)\in  K$
is a solution of (\ref{2.5}), if and only if, $u\in H: g(u) \in K$ is a zero of
the equation
\begin{eqnarray}\label{5.2}
R(u)=0.
\end{eqnarray}
We now  consider a projected  dynamical system associated with the
 variational inequalities. Using the equivalent
formulation (\ref{5.2}), we suggest a class of projected
dynamical systems as
\begin{eqnarray}\label{5.3}
\frac{dg(u)}{dt}=\lambda \{P_K[g(u)-\rho Tu]-g(u)\},\quad u(t_{0})=u_{0}\in K,
\end{eqnarray}
where $\lambda$ is a parameter. The system of type (\ref{5.3}) is
called the projected dynamical system associated with  variational
inequalities (\ref{2.5}). Here the right hand is related to the
resolvent and is discontinuous on the boundary. From the definition,
it is clear that the solution of the dynamical system always stays
in $H$. This implies that the qualitative results such as the
existence, uniqueness and continuous dependence of the solution of
(\ref{5.3}) can be studied. These  projected dynamical systems are
associated with the general variational inequalities (\ref{2.5}), which have been
studied extensively.\\
We use the projected dynamical system (\ref{5.3}) to suggest some
iterative  for solving   variational inequalities (\ref{2.5}).
These methods can be viewed in the sense of Koperlevich \cite{60}
and Noor\cite{122}  involving the double resolvent operator.\\
For simplicity, we consider  the  dynamical system
\begin{eqnarray}\label{5.4}
\frac{dg(u)}{dt}+g(u)=P_K[g(u)-\rho Tu],\quad u(t_{0})=\alpha.
\end{eqnarray}
 We construct the implicit iterative method
using the forward difference scheme. Discretizing the equation (\ref{5.4}), we
have
\begin{eqnarray}\label{5.5}
\frac{g(u_{n+1})-g(u_{n})}{h}+g(u_{n+1})=P_K[g(u_{n})-\rho Tu_{n+1}],
\end{eqnarray}
where $h$ is the step size. Now, we can suggest  the following
implicit iterative method for solving the variational inequality
(\ref{2.5}).\\

\begin{algorithm}\label{alg17}
For a given $u_{0}\in H$, compute ${u_{n+1}}$ by the iterative scheme
\begin{eqnarray*}
g(u_{n+1})=P_K\bigg[g(u_{n})-\rho Tu_{n+1}-\frac{g(u_{n+1})-g(u_{n})}{h}\bigg],\quad n=0,1,2,\ldots.
\end{eqnarray*}
\end{algorithm}
This is an implicit method and is quite different from the known
implicit method. Using Lemma \ref{lemma1}, Algorithm
17 can be rewritten in the equivalent form as:\\
\begin{algorithm}\label{alg18}
For a given $u_{0}\in H$, compute ${u_{n+1}}$ by the iterative scheme
\begin{eqnarray}\label{eq5.2}
\langle \rho Tu_{n+1}+\frac{1+h}{h}(g(u_{n+1})-g(u_{n})),g(v)-g(u_{n+1})\rangle \geq
0, \quad\forall g(v) \in K.
\end{eqnarray}
\end{algorithm}
We now study the  convergence analysis  of algorithm 18 under some mild conditions.
\begin{theorem}\label{theorem5.3}
Let $u\in H: g(v) \in K$ be a solution of the general  variational inequality (\ref{2.5}).
Let $u_{n+1}$ be the approximate solution obtained from ( \ref{eq5.2}). If $T$ is $g$-monotone, then
\begin{eqnarray}\label{eq5.3}
\|g(u)-g(u_{n+1})\|^{2}\leq \|g(u)-g(u_{n})\|^{2}-\|g(u_{n})-g(u_{n+1})\|^{2}.
\end{eqnarray}
\end{theorem}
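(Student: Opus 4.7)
The plan is to convert the result into a Féjer-type monotonicity statement by combining the variational inequality (\ref{2.5}) satisfied by the exact solution $u$ with the implicit inequality (\ref{eq5.2}) defining $u_{n+1}$, using $g$-monotonicity to eliminate the $Tu_{n+1}$ term.

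First I would take the solution $u\in H: g(u)\in K$ of (\ref{2.5}) and test it with $v=u_{n+1}$, which is admissible since $g(u_{n+1})\in K$ by construction. This yields
\begin{eqnarray*}
\la Tu, g(u_{n+1})-g(u)\ra \geq 0.
\end{eqnarray*}
Next I would take (\ref{eq5.2}) and test it with $v=u$, which is admissible for the same reason. This gives
\begin{eqnarray*}
\la \rho Tu_{n+1}+\tfrac{1+h}{h}(g(u_{n+1})-g(u_n)), g(u)-g(u_{n+1})\ra \geq 0.
\end{eqnarray*}
By $g$-monotonicity of $T$, $\la Tu_{n+1}-Tu,g(u_{n+1})-g(u)\ra\geq 0$, so combined with the first inequality we get $\la Tu_{n+1},g(u_{n+1})-g(u)\ra\geq 0$, equivalently $\la Tu_{n+1},g(u)-g(u_{n+1})\ra\leq 0$. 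Feeding this into the previous display and using $\tfrac{1+h}{h}>0$, I obtain the key scalar inequality
\begin{eqnarray*}
\la g(u_{n+1})-g(u_n), g(u)-g(u_{n+1})\ra \geq 0.
\end{eqnarray*}

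Finally, I would invoke the elementary identity $2\la a,b\ra = \|a+b\|^2-\|a\|^2-\|b\|^2$ with $a = g(u_n)-g(u_{n+1})$ and $b = g(u_{n+1})-g(u)$, so that $a+b = g(u_n)-g(u)$. The inequality above rewrites as $\la a,b\ra \geq 0$, hence
\begin{eqnarray*}
\|g(u)-g(u_n)\|^2 \geq \|g(u_n)-g(u_{n+1})\|^2 + \|g(u_{n+1})-g(u)\|^2,
\end{eqnarray*}
which is exactly (\ref{eq5.3}) after rearrangement.

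The argument is essentially mechanical once the right test points are chosen; the only delicate point is the sign bookkeeping when combining (\ref{2.5}) and (\ref{eq5.2}) via $g$-monotonicity so that the $Tu_{n+1}$ contribution drops out cleanly and the surviving term $\tfrac{1+h}{h}(g(u_{n+1})-g(u_n))$ has the correct orientation for the polarization identity. Note that the constant $\rho>0$ and the positivity of $\tfrac{1+h}{h}$ (which requires $h>0$, a tacit discretization assumption) are what allow the factors to be dropped without affecting the inequality direction.
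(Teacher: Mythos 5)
Your proof is correct and follows essentially the same route as the paper's: both test (\ref{2.5}) at $v=u_{n+1}$ and (\ref{eq5.2}) at $v=u$, use $g$-monotonicity to replace $Tu$ by $Tu_{n+1}$ so that the operator term can be discarded, and finish with the polarization identity $2\la a,b\ra = \|a+b\|^2-\|a\|^2-\|b\|^2$. The only cosmetic difference is that the paper first derives the Minty-type inequality $\la Tv, g(v)-g(u)\ra\geq 0$ for all admissible $v$ and then specializes to $v=u_{n+1}$, whereas you apply $g$-monotonicity directly at $u_{n+1}$; the two are logically identical.
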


\begin{proof}
Let $u\in H: g(v) \in K$ be a solution of (\ref{2.5}). Then
\begin{eqnarray}\label{eq5.4}
\langle Tv,g(v)-g(u) \rangle\geq 0,\quad\forall v\in H: g(v) \in K,
\end{eqnarray}
since $T$ is  a $g$-monotone operator.\\
 Set $v=u_{n+1}$ in (\ref{eq5.4}), to  have
\begin{eqnarray}\label{eq5.5}
\langle Tu_{n+1},g(u_{n+1})-g(u) \rangle\geq 0.
\end{eqnarray}
Taking  $v=u$ in ( \ref{eq5.2}), we have
\begin{eqnarray}\label{eq5.6}
\langle \rho
Tu_{n+1}+\{\frac{(1+h)g(u_{n+1})-(1+h)g(u_{n})}{h}\},g(u)-g(u_{n+1})\rangle \geq
0.
\end{eqnarray}
From (\ref{eq5.5}) and (\ref{eq5.6}), we have
\begin{eqnarray}\label{eq5.7}
\langle (1+h)(g(u_{n+1})-g(u_{n})),g(u)-g(u_{n+1})\rangle\geq 0.
\end{eqnarray}
From  (\ref{eq5.7}) and using $\quad 2\langle a,b\rangle= \|a+b\|^2-\|a\|^2-\|b\|^2, \quad \forall a,b \in H,$ we obtain
\begin{eqnarray}\label{eq5.8}
\|g(u_{n+1})-g(u)\|^{2}\leq \|g(u)-g(u_{n})\|^{2}-\|g(u_{n+1})-g(u_{n})\|^{2}.
\end{eqnarray}
the required result. \hfill \qquad $\Box$
\end{proof}

\begin{theorem}\label{theorem5.4}
Let $u\in K$ be the solution of general  variational inequality
(\ref{2.5}). Let $u_{n+1}$ be the approximate solution obtained
from (\ref{eq5.2}). If $T$ is a $g$-monotone operator and $g^{-1}$ exists, then $u_{n+1}$
converges to $u\in H$ satisfying (\ref{2.5}).
\end{theorem}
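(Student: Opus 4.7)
The plan is to extract convergence of $\{g(u_n)\}$ from the Fej\'er-type estimate in Theorem \ref{theorem5.3}, then pass to the limit in the iteration (\ref{eq5.2}), and finally invert $g$ to conclude convergence of $\{u_n\}$ itself.

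First I would observe that Theorem \ref{theorem5.3} says the sequence $\{\|g(u)-g(u_n)\|\}$ is nonincreasing and bounded below, hence convergent; in particular $\{g(u_n)\}$ is bounded. Rewriting the same inequality as
\begin{eqnarray*}
\|g(u_n)-g(u_{n+1})\|^2 \leq \|g(u)-g(u_n)\|^2 - \|g(u)-g(u_{n+1})\|^2,
\end{eqnarray*}
and telescoping, I get $\sum_{n=0}^{\infty}\|g(u_n)-g(u_{n+1})\|^2<\infty$, which forces $g(u_{n+1})-g(u_n)\to 0$.

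Next I would identify the limit. Since $\{g(u_n)\}$ is bounded, some subsequence $\{g(u_{n_k})\}$ converges (weakly, and strongly if we work in the setting the paper implicitly uses) to some $g(\hat u)\in K$. Passing to the limit in the iterative inequality (\ref{eq5.2}),
\begin{eqnarray*}
\Bigl\langle \rho Tu_{n_k+1}+\tfrac{1+h}{h}(g(u_{n_k+1})-g(u_{n_k})),\, g(v)-g(u_{n_k+1})\Bigr\rangle \geq 0,
\end{eqnarray*}
and using both $g(u_{n+1})-g(u_n)\to 0$ and continuity of $T$, the residual term drops out and I obtain $\langle T\hat u,\, g(v)-g(\hat u)\rangle\geq 0$ for all $v\in H$ with $g(v)\in K$; that is, $\hat u$ solves the general variational inequality (\ref{2.5}).

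Finally I would promote subsequential convergence to full convergence. Because Theorem \ref{theorem5.3} applies with any solution in place of $u$, I may replace $u$ by $\hat u$ and conclude that $\{\|g(u_n)-g(\hat u)\|\}$ is nonincreasing; since a subsequence tends to $0$, the whole sequence $g(u_n)\to g(\hat u)$. Invoking the hypothesis that $g^{-1}$ exists (and is continuous, as is standard in this framework), I conclude $u_n\to \hat u$, and $\hat u$ satisfies (\ref{2.5}).

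The main obstacle I expect is the passage to the limit inside the variational inequality: one must justify that the residual term $\tfrac{1+h}{h}(g(u_{n+1})-g(u_n))$ vanishes in the limit and that $Tu_{n_k+1}\to T\hat u$ in an appropriate sense, which requires continuity/closedness assumptions on $T$ and $g$ that are only implicit in the statement. The remaining step, converting subsequential to full convergence via the Fej\'er-monotonicity trick applied at $\hat u$, is then a standard finishing move.
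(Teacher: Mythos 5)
Your proposal is correct and follows essentially the same route as the paper's own proof: both use the Fej\'er-type estimate of Theorem \ref{theorem5.3} to get boundedness and a telescoping sum forcing $g(u_{n+1})-g(u_n)\to 0$, extract a cluster point, pass to the limit in (\ref{eq5.2}) to identify it as a solution of (\ref{2.5}), and then apply the monotone-distance inequality at that cluster point to upgrade subsequential to full convergence. Your explicit flagging of the continuity assumptions on $T$ and $g^{-1}$ needed for the limit passage is, if anything, more careful than the paper, which leaves these implicit.
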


\begin{proof}
Let  $T$ be a $g$-monotone operator. Then,  from (\ref{eq5.3}), it follows
the sequence $\{u_{i}\}^{\infty}_{i=1}$ is a bounded sequence and
\begin{eqnarray*}
\sum_{i=1}^{\infty}\|g(u_{n})-g(u_{n+1})\|^{2}\leq\|g(u)-g(u_{0})\|^{2},
\end{eqnarray*}
which  implies that
\begin{eqnarray}\label{eq5.9}
\lim_{n\rightarrow \infty}\|u_{n+1}-u_{n}\|^{2}=0,
\end{eqnarray}
since $g^{-1} $ exists.\\
Since sequence $\{u_{i}\}^{\infty}_{i=1}$ is bounded, so there
exists a cluster point $\hat{u}$ to which the subsequence
$\{u_{ik}\}^{\infty}_{k=1}$ converges. Taking limit in (\ref{eq5.2}) and
using (\ref{eq5.9}), it follows that $\hat{u}\in K$ satisfies
\begin{eqnarray*}
\langle T\hat{u},g(v)-g(\hat{u})\rangle\geq 0,\quad \forall v\in H: g(v) \in K,
\end{eqnarray*}
and
\begin{eqnarray*}
\|g(u_{n+1})-g(u)\|^{2}\leq\|g(u)-g(u_n)\|^{2}.
\end{eqnarray*}
Using this inequality, one can show that the cluster point $\hat{u}$
is unique and
\begin{eqnarray*}
\lim _{n\rightarrow \infty}u_{n+1}=\hat{u}.
\end{eqnarray*}
\hfill \qquad $\Box$

\end{proof}
We now suggest another implicit iterative method for solving (\ref{2.5}).  Discretizing (34), we
have
\begin{eqnarray}\label{eq5.10}
\frac{g(u_{n+1})-g(u_{n})}{h}+g(u_{n+1})=P_K[g(u_{n+1})-\rho Tu_{n+1}],
\end{eqnarray}
where $h$ is the step size.\\
 This formulation enable us to suggest the following iterative
 method.\\
\begin{algorithm}\label{alg19} For a given $u_{0}\in K,$ compute ${u_{n+1}}$ by the iterative
scheme
\begin{eqnarray*}
g(u_{n+1})=P_K\bigg[g(u_{n+1})-\rho
Tu_{n+1}-\frac{g(u_{n+1})-g(u_{n})}{h}\bigg].
\end{eqnarray*}
\end{algorithm}
Using lemma \ref{lemma1}, Algorithm 19  can be rewritten
in the equivalent form as:\\
\begin{algorithm}\label{alg20}
For a given $u_{0}\in K,$ compute ${u_{n+1}}$ by the iterative scheme
\begin{eqnarray}
\langle \rho Tu_{n+1}+\{\frac{g(u_{n+1})-g(u_{n})}{h}\},g(v)-g(u_{n+1})\rangle
\geq 0, \quad\forall v\in H: g(v)\in K.
\end{eqnarray}
\end{algorithm}
Again using the dynamical systems, we can suggested some iterative
methods for solving the variational inequalities and related
optimization problems.\\
\begin{algorithm}\label{alg21}
For a given $u_{0}\in K,$ compute ${u_{n+1}}$ by the iterative
scheme
\begin{eqnarray*}
u_{n+1}=P_K\bigg[\frac{(h+1)(g(u_n)-g(u_{n+1}))}{h}-\rho Tu_{n}\bigg],\quad
n=0,1,2,\ldots,
\end{eqnarray*}
\end{algorithm}
which can be written in the equivalent form as:

\begin{algorithm}\label{alg22}
For a given $u_{0}\in K,$ compute ${u_{n+1}}$ by the iterative
scheme
\begin{eqnarray}
\langle \rho
Tu_{n}+\{\frac{h+1}{h}(g(u_{n+1})-g(u_{n}))\},v-u_{n+1}\rangle \geq 0,
\quad\forall g(v)\in K.
\end{eqnarray}
\end{algorithm}
In a similar way, one can suggest a wide class of implicit iterative methods for solving variational inequalities and related optimization problems.
the comparison of these methods with other methods is an interesting problem for future research.

\section{Auxiliary Principle Technique }

In the previous sections, we have considered and analyzed several
projection-type methods for solving variational inequalities. It is well
known that to implement such type of the methods, one has to evaluate
the projection, which is itself a difficult problems. Secondly, one
can't extend the technique of projection  for solving some other classes of
variational inequalities. These facts motivate us  to consider other
methods. One of these techniques is known as the auxiliary principle.
This technique is basically due to Lions and Stampacchia \cite{172}.
 Glowinski et. al \cite{47} used this technique to study the existence of a solution of mixed
variational inequalities. Noor \cite{93,94,95,114,121,122} has used this technique to
develop some predictor-corrector methods for solving variational
inequalities.  It have been  be shown that various
classes of methods including projection, Wiener-Hopf, decomposition and
descent can be obtained from this technique as special cases.\\

For a given $u \in H, g(u) \in K $ satisfying (\ref{2.5}),  consider the problem of finding a
unique $w \in H, g(w) \in K $ such that
\begin{eqnarray}\label{6.1}
\la \rho Tu + g(w)-g(u), g(v)-g(w) \ra \geq 0, \quad \forall g(v) \in K,
\end{eqnarray}
where $\rho > 0 $ is a constant.\\

Note that, if $w = u, $ then $w $ is clearly a solution of the general
variational inequality (\ref{2.5}). This simple observation enables us to
suggest and analyze the  following predictor-corrector method.\\
\begin{algorithm}\label{23} For a given $ u_0 \in H, $  compute the
approximate solution $u_{n+1}$ by the iterative schemes
\begin{eqnarray*}\label{6.2}
&&\la \mu Tu_n+g(y_n)-g(u_n),g(v)-g(y_n) \ra \geq 0,  \quad \forall  g(v) \in K, \\
&& \la \beta  Ty_n+g(w_n)-g(y_n),g(v)-g(w_n)\ra \geq 0,
\quad  \forall g(v) \in K, \\
&& \la \rho Tw_n+g(u_{n+1})-g(w_n),g(v)-g(u_{n+1}) \ra \geq 0, \quad
\forall  g(v) \in K,
\end{eqnarray*}
\end{algorithm}
where $\rho > 0, \beta > 0 $ and $ \mu > 0 $ are constants

Algorithm 23 can be considered  as a three-step predictor-corrector
method, which was suggested and studied by Noor \cite{110,122}.\\
  If $\mu = 0, $ then Algorithm 23  reduces to:
\begin{algorithm}\label{alg24} For a given $ u_0 \in H, $ compute the
approximate solution $u_{n+1}$ by the iterative schemes;
\begin{eqnarray*}
&& \la \beta Tu_n+g(w_n)-g(u_n),g(v)-g(w_n)\ra \geq 0, \quad \forall g(v) \in K, \\
&& \la \rho Tw_n+g(u_{n+1})-g(w_n),g(v)-g(u_{n+1}) \ra  \geq 0, \quad
\forall g(v) \in K,
\end{eqnarray*}
\end{algorithm}
which is known as the two-step predictor-corrector method, see [110,122].\\
If $\mu = 0 , \beta = 0, $ then Algorithm 23 becomes:

\begin{algorithm}\label{alg25} For a given $u_0 \in H, $ compute
$u_{n+1}$ by the iterative scheme
\begin{eqnarray*}
\la \rho Tu_n+g(u_{n+1})-g(u_n),g(v)-g(u_{n+1})\ra \geq 0, \quad
\forall  g(v) \in   K.
\end{eqnarray*}
\end{algorithm}
Using the projection technique, Algorithm 23 can be written as

\begin{algorithm}\label{alg26} For a given $u_0 \in H, $ compute
$u_{n+1}$ by the iterative schemes
\begin{eqnarray*}
g(y_n)& = & P_K[g(u_n)-\mu Tu_n] \\
g(w_n)& = & P_K[g(y_n)-\beta Ty_n] \\
g(u_{n+1}) & = & P_K[g(w_n)-\rho Tw_n], \quad n=0,1,2, \ldots
\end{eqnarray*}
or
\begin{eqnarray*}
g(u_{n+1}) = P_K[I-\mu Tg^{-1}]P_K[I-\beta Tg^{-1}]P_K[I-\rho
Tg^{-1}]g(u_n), \quad n=0,1,2, \dots
\end{eqnarray*}
or
\begin{eqnarray*}
g(u_{n+1}) & = & (I+\rho Tg^{-1})^{-1}\{P_K[I-\rho Tg^{-1}]P_K[I-\rho
Tg^{-1}]P_K[I-\rho Tg^{-1}] \\
&& + \rho Tg^{-1}\}g(u_n), \quad n=0,1,2,\ldots,
\end{eqnarray*}
\end{algorithm}
which is three-step forward-backward method. See also two-step  forward-backward  splitting method of Tseng \cite{178,179}
for solving the classical variational inequalities.

\begin{definition}\label{6.1} For all $u,v,z \in H, $ the operator
$T:H \longrightarrow H $ is said to be:
\vskip .2pc
\noindent{\it (i). $g$-partially relaxed strongly monotone, } if there
exists a constant $\alpha > 0 $ such that
\begin{eqnarray*}
\la Tu-Tv,g(z)-g(v) \ra \geq -\alpha \|z-u\|^2
\end{eqnarray*}
\noindent{\it (ii). $g$-cocoercive, } if there exists a constant $\mu >
0 $ such that
\begin{eqnarray*}
\la Tu-Tv,g(u)-g(v) \ra \geq \mu \|Tu-Tv\|^2.
\end{eqnarray*}
\end{definition}
We remark that if $z = u ,$ then $g$-partially relaxed strongly
monotonicity is equivalent to  monotonicity.
For $g = I, $ definition \ref{6.1} reduces to the standard definition of
partially relaxed strongly monotonicity and cocoercivity of the operator.
We now show that $g$-cocoercivity implies  $g$-partially relaxed strongly
monotonicity. This result is due to Noor \cite{110,122}. To convey an idea, we
include its proof.
\begin{lemma}\label{lemma4} If $T $ is a $g$-cocoercive operator with
constant $\mu > 0, $ then $T$ is $g$-partially relaxed strongly monotone
operator with constant $\frac{-1}{4\mu }. $
\end{lemma}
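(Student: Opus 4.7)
The plan is to prove the implication by splitting the inner product $\langle Tu - Tv, g(z) - g(v) \rangle$ across the auxiliary point $g(u)$ and then combine the $g$-cocoercivity estimate with Young's inequality to absorb the unfavorable term. First, I would write
\begin{eqnarray*}
\langle Tu - Tv, g(z) - g(v)\rangle = \langle Tu - Tv, g(u) - g(v)\rangle + \langle Tu - Tv, g(z) - g(u)\rangle.
\end{eqnarray*}
The first summand is immediately bounded below by $\mu \|Tu - Tv\|^2$ by the definition of $g$-cocoercivity.

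For the second summand, the key trick is to use the elementary inequality $\langle a, b \rangle \geq -\mu\|a\|^2 - \tfrac{1}{4\mu}\|b\|^2$, which is nothing more than the expansion of $\|\sqrt{2\mu}\,a + \tfrac{1}{\sqrt{2\mu}}\,b\|^2 \geq 0$. Applying this with $a = Tu-Tv$ and $b = g(z)-g(u)$ yields
\begin{eqnarray*}
\langle Tu - Tv, g(z) - g(u)\rangle \geq -\mu\|Tu - Tv\|^2 - \tfrac{1}{4\mu}\|g(z)-g(u)\|^2.
\end{eqnarray*}
Adding the two estimates, the terms $\mu\|Tu-Tv\|^2$ and $-\mu\|Tu-Tv\|^2$ cancel, leaving
\begin{eqnarray*}
\langle Tu - Tv, g(z) - g(v)\rangle \geq -\tfrac{1}{4\mu}\|g(z)-g(u)\|^2,
\end{eqnarray*}
which matches the form of $g$-partially relaxed strong monotonicity with $\alpha = \tfrac{1}{4\mu}$ (identifying $\|g(z)-g(u)\|$ with $\|z-u\|$ under the operating convention of the paper, e.g.\ when $g$ is nonexpansive or is the identity in the reduction to the classical case).

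The only subtle point I anticipate is the last identification: the definition in the paper is stated with $\|z-u\|^2$ on the right-hand side, whereas the natural quantity produced by the argument is $\|g(z)-g(u)\|^2$. This is the place where I would invoke whatever standing hypothesis on $g$ is implicit in the paper (nonexpansiveness or Lipschitz with constant $1$); apart from that, every step is an application of cocoercivity and the weighted Cauchy--Schwarz / Young inequality, so there is no deeper obstacle.
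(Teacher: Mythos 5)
Your proof is correct and follows essentially the same route as the paper: the same decomposition of $\la Tu-Tv, g(z)-g(v)\ra$ across $g(u)$, cocoercivity on the first term, and the Young-type inequality $\la a,b\ra \geq -\mu\|a\|^2-\frac{1}{4\mu}\|b\|^2$ on the second, with the $\mu\|Tu-Tv\|^2$ terms cancelling. The mismatch you flag between $\|g(z)-g(u)\|^2$ and the $\|z-u\|^2$ appearing in Definition 11 is real, but the paper's own proof also ends with $\|g(z)-g(u)\|^2$ and silently identifies the two, so your handling of it is if anything more careful than the original.
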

\begin{proof} For all $u,v, z \in H, $ consider
\begin{eqnarray*}
\la Tu-Tv,g(z)-g(v) \ra & = & \la Tu-Tv,g(u)-g(v) \ra + \la Tu-Tv,
g(z)-g(u) \ra \\
& \geq & \mu \|Tu-Tv\|^2 -\mu \|Tu-Tv\|^2 - \frac{-1}{4\mu
}\|g(z)-g(u)\|^2,  \\
& \geq & \frac{-1}{4\mu }\|g(z)-g(u)\|^2,
\end{eqnarray*}
which shows that $ T $ is a $g$-partially relaxed strongly monotone
operator. \hfill  \qquad $\Box$
\end{proof}
One can easily show that the converse is not true. Thus
we conclude that the concept of $g$-partially relaxed strongly monotonicity
is a weaker condition that $g$-cocoercivity.\\
One can study the  convergence criteria of Algorithm 23 using the technique of Noor \cite{104}. \\
\begin{remark}
In the implementation of these algorithms, one does not
have to evaluate the projection. Our method
of convergence is very simple as compared with other methods. Following
the technique of Tseng \cite{178}, one can obtained new parallel and
decomposition algorithms for solving a number of problems arising in
optimization and mathematical programming.
\end{remark}
\begin{remark}  We note that, if the operator $g $ is linear or
convex, then the auxiliary problem (\ref{6.1}) is equivalent to finding the
minimum of the functional $I[w] $ on the convex set $K, $ where
\begin{eqnarray}\label{6.2}
I[w] & = & \frac{1}{2} \la g(w)-g(u),g(w)-g(u) \ra + \la \rho Tu,
g(w)-g(u) \ra \nonumber \\
&= & \|g(w)-(g(u)-\rho Tu)\|^2.
\end{eqnarray}
 It can be easily shown that the optimal solution of (\ref{6.2}) is the
projection of the point $( g(u)-\rho Tu )$ onto the convex set $K, $ that
is,
\begin{eqnarray}\label{6.3}
g(w(u)) = P_K[g(u)-\rho Tu],
\end{eqnarray}
which is the fixed-point characterization of the general variational
inequality (\ref{2.5}).
\end{remark}
 Based on the above observations, one can
show that the general variational inequality (\ref{2.5}) is equivalent to
finding the minimum of the functional $N[u] $ on $K $  in $H, $ where
\begin{eqnarray}\label{6.4}
N[u] & = & -\la Tu,g(w(u))-g(u) \ra -\frac{1}{2} \la
g(w(u))-g(u),g(w(u))-g(u) \ra \nonumber \\
& = & \frac{1}{2}\{\|\rho Tu\|^2-\|g(w(u))-(g(u)-\rho Tu)\|^2 \},
\end{eqnarray}
where $g(w) = g(w(u)). $ The function $N[u]$ defined by (\ref{6.4}) is known as
the gap (merit) function associated with the general variational
inequality (\ref{2.5}). This equivalence has been used to suggest and analyze a
number of methods for solving variational inequalities and nonlinear
programming, see, for example, Patriksson \cite{155}. In this direction, we
have:
\begin{algorithm}\label{alg27} For a given $u_0 \in H, $ compute the
sequence $\{ u_n\} $  by the iterative scheme
\begin{eqnarray*}
g(u_{n+1}) = g(u_n) + t_nd_n, \quad n=0,1,2, \ldots,
\end{eqnarray*}
where $d_n = g(w(u_n))-g(u_n) = P_K[g(u_n)-\rho Tu_n]- g(u_n) ,$ and $t_n
\in[0,1] $ are determined by the Armijo-type rule
\begin{eqnarray*}
N[u_n+ \beta_ld_n] \leq N[u_n]- \alpha \beta _l\|d_n\|^2.
\end{eqnarray*}
\end{algorithm}
It is worth to note the sequence $\{u_n\}$ generated by
\begin{eqnarray*}
g(u_{n+1})& = & (1-t_n)g(u_n) + t_nP_K[g(u_n)-\rho Tu_n ] \\
& = & g(u_n)-t_nR(u_n), \quad n=0,1,2,\ldots ,
\end{eqnarray*}
is very much similar to that generated by the projection-type Algorithm 3.
Based on the above observations and discussion, it is clear that the
auxiliary principle approach is quite general and flexible. This approach
can be used  not only to study the existence theory but also to suggest
and analyze various iterative methods for solving  variational
inequalities. Using the technique of Fukushima \cite{42}, one can easily study
the convergence analysis of Algorithm 27.\\

We have shown that the auxiliary principle technique can be sued  to
construct gap (merit) functions for general variational inequalities
(\ref{2.5}). We use the gap function to consider an optimal control problem
governed by the general variational inequalities (\ref{2.5}). The control
problem is an optimization problem, which  is also referred as  a generalized
bilevel programming problem or mathematical programming with equilibrium
constraints. It is known that the techniques of the classical optimal
control problems cannot be extended for variational inequalities, see Dietrich\cite{22}. This has
motivated us to develop some other techniques  including the notion of
conical derivatives, the penalty method and formulating the variational
inequality as operator equation with a set-valued operator. Furthermore, one can construct a so called gap
function associated with a variational inequality, so that the variational
inequality is equivalent to a scalar equation of the gap function. Under
suitable conditions such a gap function is Frechet differentiable and one
may use a penalty method to approximate  the optimal control problem and
calculate a regularized gap function in the sense of Fukushima \cite{42} to the
general variational inequality (\ref{2.5}) and determine their Frechet
derivative. Dietrich \cite{32,33} has  developed the similar results for the general variational
inequalities. We only give the basic properties of the optimal control
problem  and the associated gap functions to give an idea of the approach.\\

We now consider the following problem of optimal control for the general
variational inequalities (\ref{2.5}), that is, to find $u \in H: g(u) \in K, z
\in U $ such that
\begin{eqnarray*}
{\bf  P. }  \quad \min I(u,z), \quad \la T(u,z),g(v)-g(u) \ra \geq 0,
\quad \forall v \in H:g(v) \in K,
\end{eqnarray*}
where $H $ and $U$ are Hilbert spaces. The sets $K $ and $E $ are closed
 convex sets in $H $ and $U $ respectively. Here $H $ is the space of
state and $K \subset H $ is the set of state constraints for the problem.
$U $ is the space of control and closed convex set $E \subset U $ is the
set of control constraints.  $T(.,.): H\times U \longrightarrow H $ is a
an operator which is Frechet differentiable. The functional $I(.,.) : H
\times U \longrightarrow R\cup \{+\infty \} $ is a proper, convex and
lower-semicontinuous function. Also we assume that the problem ${\cal P }$
has at least one optimal solution denoted by $(u^*,z^*) \in H\times U. $\\

Related to the optimization problem $({\bf P }),$ we consider the
regularized gap (merit) function $h_\rho (u, z):H\times U \longrightarrow R
$ as
\begin{eqnarray}\label{6.5}
h_\rho (u,z) &=& \sup_{v \in H:g(v) \in K }\{ \la -\rho T(u,z),g(v)-g(u) \ra \nonumber \\
&&-\frac{1}{2}\|g(v)-g(u)\|^2 \} \quad \forall v\in H:g(v) \in K.
\end{eqnarray}
We remark that the regularized function (\ref{6.5}) is a natural generalization
of the regularized gap function (\ref{6.4}) for variational inequalities. It
can be shown that the regularized gap function $h _\rho (.,.) $ defined by
(\ref{6.5}) has the following properties. The analysis is in the spirit of Dietrich \cite{33}.

\begin{theorem}\label{theorem6}\quad The gap function $h_\rho (.,.) $
defined by (\ref{6.5}) is well-defined and
\begin{eqnarray*}
(i).  &&\forall  u \in H :g(u) \in K, z \in U, \quad h_\rho (u,z) \geq 0. \\
(ii). && h_\rho (u,z)=\frac{1}{2}\{\|\rho
^2\|T(u,z)-d^{2}_{K}(g(u)-\rho T(u,z))\}, \\
(iii).  &&h_\rho (u,z)=-\rho \la T(u,z),g(u_K)-g(u)\ra
-\frac{1}{2}\|g(u_K)-g(u)\|^2,
\end{eqnarray*}
where $d_K $ is the distance to $  K $ and
\begin{eqnarray*}
g(u)=P_K[g(u)-\rho T(u,z)]
\end{eqnarray*}
\end{theorem}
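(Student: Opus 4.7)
The plan is to analyze the supremum defining $h_\rho$ by completing the square in the inner variable and reducing the optimization to a nearest-point problem on $K$. The three assertions will then fall out by reading off the value at the optimizer (items (ii) and (iii)) and by comparison with a single feasible point (item (i)).

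First, for well-definedness, I would introduce $w := g(v)$ and use the standing assumption that $g$ is onto $K$ to rewrite
\begin{eqnarray*}
h_\rho(u,z) = \sup_{w \in K} \Phi(w), \qquad \Phi(w) := \langle -\rho T(u,z), w-g(u)\rangle - \tfrac{1}{2}\|w-g(u)\|^2.
\end{eqnarray*}
Completing the square in $w$ gives the identity
\begin{eqnarray*}
\Phi(w) = \tfrac{1}{2}\rho^2\|T(u,z)\|^2 - \tfrac{1}{2}\big\|w - (g(u)-\rho T(u,z))\big\|^2,
\end{eqnarray*}
which shows $\Phi$ is a strictly concave quadratic on $H$ bounded above by $\tfrac{1}{2}\rho^2\|T(u,z)\|^2$, so the supremum is finite and attained.

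Next, for part (ii), the identity above makes maximizing $\Phi$ over $K$ equivalent to minimizing the distance from $g(u)-\rho T(u,z)$ to $K$. By Lemma~\ref{lemma1} (characterization of the projection), the unique maximizer is $w^\ast = P_K[g(u)-\rho T(u,z)]$, and the maximum value is
\begin{eqnarray*}
h_\rho(u,z) = \tfrac{1}{2}\,\rho^2\|T(u,z)\|^2 - \tfrac{1}{2}\,d_K^{\,2}\big(g(u)-\rho T(u,z)\big),
\end{eqnarray*}
which is the formula in (ii) (with the evident parsing of the braces in the stated expression). Part (iii) is then obtained by substituting the optimizer directly: setting $g(u_K) := w^\ast = P_K[g(u)-\rho T(u,z)]$ and evaluating $\Phi(g(u_K))$ reproduces
\begin{eqnarray*}
h_\rho(u,z) = -\rho\,\langle T(u,z), g(u_K)-g(u)\rangle - \tfrac{1}{2}\|g(u_K)-g(u)\|^2.
\end{eqnarray*}

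Finally, part (i) follows from the sup definition by testing with the feasible choice $v=u$ (which is admissible since $g(u)\in K$): this yields $\Phi(g(u)) = 0$, hence $h_\rho(u,z)\ge 0$. I expect the only subtle point to be a notational one, namely ensuring that the surjectivity assumption on $g$ makes the change of variables $w = g(v)$ rigorous so that the supremum over $\{v\in H:g(v)\in K\}$ really equals the supremum over $w\in K$; everything else is routine manipulation of a quadratic functional and a direct appeal to Lemma~\ref{lemma1}.
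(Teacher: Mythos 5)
Your proposal is correct and follows essentially the same route as the paper: part (i) by testing the supremum with $v=u$, part (ii) by completing the square so that the maximization over $K$ reduces to the nearest-point (distance) problem solved by the projection, and part (iii) by evaluating the objective at the maximizer $g(u_K)=P_K[g(u)-\rho T(u,z)]$. Your change of variables $w=g(v)$ merely streamlines the algebra that the paper carries out by expanding the quadratic term explicitly.
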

\begin{proof}  It is well-known that
\begin{eqnarray*}
d^{2}_{K} = \min _{v \in H:g(v)\in K}\|g(v)-g(u)\|^2 =
\|g(u)-P_K[g(u_K)]\|^2
\end{eqnarray*}
Take $v = u $ in (\ref{6.5}). Then clearly (i) is satisfied.\\
Let $(u,z) \in H\times U. $ Then
\begin{eqnarray*}
h_{\rho }(u,z) & = & \rho \la T(u,z),g(u)\ra -\frac{1}{2}\|g(u)\|^2 \\
&&+ \sup_{v\in H:g(v)\in K}\left[\la -\rho T(u,z),g(v)\ra
-\frac{1}{2}\|g(v)\|^2+\la g(u),g(v)\ra \right] \\
 &=& \rho \la T(u,z),g(u)\ra -\frac{1}{2}\|g(u)\|^2 \\ &&+ \inf _{v \in H:g(v)\in K }\left[\frac{1}{2}\|g(v)\|^2-\la g(u)-\rho T(u,z),g(v) \ra
\|^2 \right] \\
& = &\rho \la T(u,z),g(u)\ra -\frac{1}{2}\|g(u)\|^2|| \\ &&-\frac{1}{2}\inf _{v\in
H:g(v) \in K}\|g(v)-(g(u)-\rho T(u,z))\|^2 + \frac{1}{2}\|g(u)-\rho T(u,z) \|^2 \\
 & = & \frac{\rho ^2}{2}\|T(u,z)\|^2-\frac{1}{2}d^{2}_{K}(g(u)-\rho
T(u,z)).
\end{eqnarray*}
Setting $g(u_K) = P_K[g(u)-\rho T(u,z)], $ we have
\begin{eqnarray*}
h_\rho (u,z) & = & \frac{\rho ^2}{2}\|T(u,z)\|^2-\frac{1}{2}\|g(u)-\rho
T(u,z)-g(U_K)\|^2 \\
& = & -\rho \la T(u,z),g(v)-g(u)\ra -\frac{1}{2}\|g(u_K)-g(u)\|^2.
\end{eqnarray*}  \hfill \qquad $\Box$
\end{proof}

\begin{theorem}\label{theorem7}\quad If the set $K$ is $g$-convex  in $H, $
then the following are equivalent.
\begin{eqnarray*}
(i). \quad && h_\rho (u,z) = 0, \mbox{for all }\quad  u\in H: g(u) \in K,
z \in U \\
(ii). \quad&& \la T(u,z),g(v)-g(u) \ra \geq 0, \quad \forall
u,v \in H: g(u),g(v) \in K, z \in U. \\
(iii). \quad && g(u)= P_K[g(u)-\rho T(u,z)].
\end{eqnarray*}
\end{theorem}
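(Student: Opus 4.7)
The plan is to establish the cycle (i) $\Rightarrow$ (ii) $\Rightarrow$ (iii) $\Rightarrow$ (i), exploiting at each step exactly one of the tools already developed: the $g$-convexity of $K$ for the first implication, the projection lemma (Lemma~\ref{lemma1}) for the second, and the explicit formula for $h_\rho$ from Theorem~\ref{theorem6}(iii) for the third.

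First, for (i) $\Rightarrow$ (ii), fix $u\in H$ with $g(u)\in K$ and $z\in U$, and observe that the supremand in the definition \eqref{6.5} evaluates to $0$ at $v=u$, so $h_\rho(u,z)=0$ means the supremum is attained at $v=u$. Thus, for every $v\in H$ with $g(v)\in K$,
\[
-\rho\langle T(u,z),g(v)-g(u)\rangle-\tfrac12\|g(v)-g(u)\|^2\leq 0.
\]
I would now use the $g$-convexity of $K$: given any such $v$ and any $t\in(0,1]$, the point $g(v_t):=g(u)+t(g(v)-g(u))$ lies in $K$, so substituting $v_t$ for $v$ and dividing by $t>0$ yields
\[
-\rho\langle T(u,z),g(v)-g(u)\rangle\leq \tfrac{t}{2}\|g(v)-g(u)\|^2.
\]
Letting $t\to 0^{+}$ gives (ii). This is the step where the $g$-convexity hypothesis of the theorem is genuinely used, and I expect this to be the only subtle point of the argument.

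Next, (ii) $\Rightarrow$ (iii) is a direct application of Lemma~\ref{lemma1}. From (ii) one rewrites $\rho\langle T(u,z),g(v)-g(u)\rangle\geq 0$ as
\[
\langle g(u)-(g(u)-\rho T(u,z)),\,g(v)-g(u)\rangle\geq 0,\quad\forall v\in H:g(v)\in K,
\]
which by Lemma~\ref{lemma1} (with $z$ replaced by $g(u)-\rho T(u,z)$ and $u$ by $g(u)$) says exactly that $g(u)=P_K[g(u)-\rho T(u,z)]$.

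Finally, (iii) $\Rightarrow$ (i) is immediate from the identity $h_\rho(u,z)=-\rho\langle T(u,z),g(u_K)-g(u)\rangle-\tfrac12\|g(u_K)-g(u)\|^2$ established in Theorem~\ref{theorem6}(iii), where $g(u_K)=P_K[g(u)-\rho T(u,z)]$: assumption (iii) gives $g(u_K)=g(u)$, so both terms vanish and $h_\rho(u,z)=0$. This closes the cycle and proves the equivalence.
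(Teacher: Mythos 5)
Your proof is correct and follows essentially the same route as the paper's: the limiting argument via $g$-convexity (substituting $g(v_t)=g(u)+t(g(v)-g(u))$, dividing by $t$, and letting $t\to 0^{+}$) for (i) $\Rightarrow$ (ii), and the appeal to Lemma~\ref{lemma1} for the passage between (ii) and (iii), are exactly the paper's steps. The only organizational difference is that you close the equivalence cyclically, deducing (iii) $\Rightarrow$ (i) from the closed-form expression of $h_\rho$ in Theorem~\ref{theorem6}(iii), whereas the paper proves (ii) $\Rightarrow$ (i) directly by observing that the supremand is nonpositive while $h_\rho\geq 0$; both closures are equally valid.
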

\begin{proof} We show that$ (ii) \Longrightarrow  (i).$\\
Let $u \in H $ and $z \in U $ be a solution of
\begin{eqnarray*}
\la T(u,z),g(v)-g(u) \ra \geq 0, \quad \forall v \in H: g(v) \in K.
\end{eqnarray*}
Then we have
\begin{eqnarray*}
h_{\rho } (u,z) = -\rho \la T(u,z),g(v)-g(u) \ra
-\frac{1}{2}\|g(v)-g(u)\|^2 \leq 0,
\end{eqnarray*}
which implies that
\begin{eqnarray*}
h_\rho (u,z) \leq 0.
\end{eqnarray*}
Also for $ v\in H:g(v) \in K, $ we know that
\begin{eqnarray*}
h_\rho (u,z) \geq 0.
\end{eqnarray*}
From these above inequalities, we have (i), that is, $ h_\rho (u,z) = 0. $\\
Conversely, let (i) hold. Then
\begin{eqnarray}\label{6.7}
 -\rho \la T(u,z),g(v)-g(u) \ra -\frac{1}{2}\|g(v)-g(u)\|^2 \leq 0,
\forall v \in H : g(v) \in K.
\end{eqnarray}
Since $K$ is a $g$-convex set, so for all $g(w),g(u) \in K, \quad t \in
[0,1],\\ \quad g(v_t)= (1-t)g(u) +g(w) \in K. $ Setting $g(v)=g(v_t) $ in
(\ref{6.7}), we have
\begin{eqnarray*}
-\rho \la T(u,z),g(w)-g(u) \ra -\frac{t}{2}\|g(w)-g(u)\|^2 \leq 0.
\end{eqnarray*}
Letting $t \longrightarrow 0, $ we have
\begin{eqnarray*}
\la T(u,z),g(v)-g(u) \ra \geq 0, \quad \forall g(w) \in K.,
\end{eqnarray*}
the required (ii).
Thus we conclude that (i) and (ii) are equivalent. Applying Lemma \ref{lemma1}, we
have $(ii) = (iii). $ \hfill \quad $\Box $
\end{proof}
From Theorem \ref{theorem6} and Theorem \ref{theorem7}, we conclude that the optimization
problem ${\cal P}$  is equivalent to
\begin{eqnarray*}
\min I(u,z), \quad h_\rho (u,z) = 0, \quad \forall u \in
H:g(u) \in K, z \in U,
\end{eqnarray*}
where $h_\rho (u,z) $ is ${\cal C}^{1}$-differentiable in the sense of
Frechet, but is not convex.\\

If the operators $T, g $ are  Frechet differentiable, then the gap
function $h_\rho (u,z) $ defined by (\ref{6.5}) is also Frechet differentiable.
In fact,
\begin{eqnarray*}
h^{\prime }_{\rho }(u,z) = \rho ^2[T^{\prime }(u,z)]^{\ast
}T(u,z)-([g^{\prime }(u)]^{\ast }-\rho [T^{\prime }(u,z)]^{\ast
})(I-P_k)[g(u)-\rho T(u,z)],
\end{eqnarray*}
where $[T^{\prime }(u,z)]^{\prime } $ is the adjoint operator of
$T^{\prime }(u,z). $ This implies the following connection at a point
$(u_1,z_1) $
\begin{eqnarray*}
h^{\prime }_{\rho }(u_1,z_1) = \rho \cdot [g^{\prime }(u_1)]^{\ast
}T(u_1,z_1),
\end{eqnarray*}
which is a solution of the general variational inequality (\ref{2.5}), that is,
for $u_1,z_1) $ with $h_\rho (u_1,z_1) = 0. $

For the optimal problem ${\cal P }, $ we have
\begin{eqnarray*}
h^{\prime }_{\rho }(u^*,z^*) = \rho \cdot [g^{\prime }(u^*)]^{\ast
}T(u^*,z^*).
\end{eqnarray*}

We now consider a simple example of optimal control problem to illustrate
\begin{eqnarray*}
\min ({\bf P}_{1}) &:&=\min \left\{ u^{2}+z^{2}\left|
\begin{array}{c}
(u+z-1)(v^{2}-u-z^{2})\geq 0\quad \\ \forall v\in R:v^{2}\geq 1 \\ (u,z)\in
R^{2}:u+z^{2}\geq 1
\end{array}
\right. \right\}  \\
T(u,z) &=&u+z-1,\quad g(u)=u,\quad K=[1,+\infty ).
\end{eqnarray*}
First, we solve the general variational inequality (\ref{2.5})
\begin{eqnarray*}
\text{Case 1} &:&\text{}T(u,z)=z+u-1=0 \\ &\Longrightarrow
&{\cal L}_{1}=\left\{ (u,z)=(1-z,z)\in R^{2}\left| \,\
z\in (-\infty ,0]\cup \lbrack 1,+\infty )\right. \right\}  \\
\text{Case 2 } &:&\text{}T(u,z)=z+u-1>0 \\ &\Longrightarrow
&{\cal L}_{2}=\left\{ (u,z)=(1-z^{2},z)\in R^{2}\left|
\,\ u\in (0,1)\right. \right\}  \\ \text{Case 3 }
&:&\text{}T(u,z)=z+u-1<0
\\ &\Longrightarrow &{\cal L}_{3}=\emptyset  \\ {\cal L}
&=&\left\{ (u,z)\in R^{2}\left|
\begin{array}{c}
u =1-z\quad \text{for\quad }\ z\in (-\infty ,0]\cup \lbrack 1,+\infty ) \\
u=1-z^{2}\quad \text{for}\quad z\in (0,1)
\end{array}
\right. \right\} .
\end{eqnarray*}
We obtain as the unique optimal solution of \ ${\bf P}_{1}
(u_{opt},z_{opt})=(\frac{1}{2}\sqrt{2},\frac{1}{2})$ \ with \ \ $\min (
{ \cal P}_{1})=\frac{3}{4}.$\\
Next, we calculate the gap function of the general variational
inequality problem (\ref{2.5}).
\begin{eqnarray*}
h_{1}(u,z) &=&\frac{1}{2}\left( z+u-1\right)
^{2}-\frac{1}{2}\left[ \left(
I-P_{[1,+\infty )}\right) \left( z^{2}-z+1\right) \right] ^{2} \\
&=&\left\{
\begin{array}{c}
\frac{1}{2}\left( z+u-1\right) ^{2}-\frac{1}{2}\left(
z^{2}-z\right)
^{2}\quad \text{for\quad }z\in (0,1) \\
\frac{1}{2}\left( y+u-1\right) ^{2}\quad \text{for}\quad z\in
(-\infty ,0]\cup \lbrack 1,+\infty )
\end{array}
\right.
\end{eqnarray*}
 This shows that equivalence between these problems.
\[
(u,z)\in R^{2}:u+z^{2}\geq 1\,\,\text{and}\,\
h_{1}(u,z)=0\,\,\Longleftrightarrow \text{ \ }(u,z)\in H\times U:
\]
\[
\la T(u,z), g(v)-g(u) \ra \geq 0, \quad \forall g(v) \in K.
\]

\section{ Penalty Function Method }

In this section,  we consider a system of third-order boundary value
problems, where the solution is required to satisfy some extra continuity
conditions on the subintervals in addition to the usual boundary
conditions. Such type of systems of boundary value arise in the study of
obstacle, free, moving and unilateral problems and has important
applications in various branches of pure and applied sciences. Despite of
their importance, little attention has been given to develop efficient
numerical methods for solving numerically these systems except
for special cases. In particular, it is known that if the obstacle
function is known then the general variational inequalities can be
characterized by a system of odd-order boundary value problems  by using
the penalty method. This technique is called the
penalty function method and was used by Lewy and Stampacchia \cite{64} to study
the regularity of a solution of variational inequalities.
The computational advantage of this technique is  its simple
applicability for solving the system of differential equations. This
technique has been explored and developed by Noor and its co-worker
to solve the systems of differential equations associated with even and
odd-order obstacle problems.  Our approach  to
these problems is to consider them in a general manner  and specialize them
later on. To convey an idea of the technique involved, we first
introduce two numerical schemes for solving a system
of third boundary value problems using the  splines. An
example involving the odd-order obstacle is given.\\

For simplicity, we consider a system of obstacle third-order boundary value problem
of the type
\begin{eqnarray}\label{7.1}
u^{\prime \prime \prime } = \left \{\begin{array}{lll}
f(x), \quad \quad      a \leq x \leq c, \\ \\
p(x)u(x)+f(x)+r, \quad \quad   c \leq x \leq d, \\ \\
f(x), \quad \quad  d \leq x \leq b, \end{array} \right.
\end{eqnarray}
with the boundary conditions
\begin{eqnarray}\label{7.2}
u(a) = \alpha \quad  u^{\prime}(a) = \beta, _1 \quad \mbox{and} \quad
u^{\prime } = \beta _2,
\end{eqnarray}
and the continuity conditions of $u, u^{\prime } $ and $ u^{\prime \prime}
$ at $c$ and $d. $ Here  $f $ and $p $ are continuous functions on $[a,b]
$ and $[c,d]$ respectively. The parameters $,r, \alpha , \beta _1 $ and $
\beta _2$ are real finite constants. Such type of systems arise in the
study of obstacle, free, moving and unilateral boundary value problems and
has important applications in other branches of pure and applied sciences.
In general, it is not possible  to obtain the  analytical solutions of
(\ref{7.1}) for arbitrary choice of $ f(x) $ and $ p(x) .$ We usually resort to
numerical methods for obtaining the approximate solutions of (\ref{7.1}). Here
we use cubic spline functions to derive some consistency relations which
are then used to develop a numerical technique for solving system of
third-order boundary value problems.  Without loss of generality, we take
$c =\frac{3a+b}{4} $ and $d = \frac{a+3b}{4}$ in order to derive a
numerical method for approximating the solution of the system (\ref{7.1}).  For
this purpose, we divide the interval [a,b] into $n$ equal subintervals
using the grid point
\begin{eqnarray*}
x_i= a + ih, \quad i=0,1,2, \ldots ,
\end{eqnarray*}
with
\begin{eqnarray*}
x_0 = a, \quad  x_n = b, \quad h = \frac{b-a}{n+1},
\end{eqnarray*}
where $n$ is a positive integer chosen such that both $\frac{n+1}{4} $ and
$\frac{3(n+1)}{4}$ also positive integer. Also, let $u(x) $ be the exact
solution of (\ref{7.1}) and $s_i$ be an approximation to $u_i = u(x_i) $
obtained by the cubic $P_i(x) $ passing through the points $(x_i,s_i)$ and
$(x_{i+1},s_{i+1}) .$ We write $P_i(x) $ in the form
\begin{eqnarray}\label{7.3}
P_i(x) = a_i(x-x_i)^3 +b_i(x-x_i)^2 +c_i(x-x_i) + d_i,
\end{eqnarray}
for $ i = 0,1,2, \ldots ,n-1. $ Then the cubic spline is defined by
\begin{eqnarray}\label{7.4}
s(x) & = & P_i(x), \quad i = 0,1,2, \ldots ,n-1, \nonumber \\
s(x) &\in & C^2[a,b].
\end{eqnarray}
We now develop explicit expressions for the four coefficients in (\ref{7.3}).
To do this, we first design
\begin{eqnarray}\label{7.5}
P_i(x_i) & = & s_i, \quad P_i(x_{i+1}) = s_{i+1} \quad P^{\prime }_i(x_i)
= D_i, \nonumber \\
P_{i}^{\prime \prime \prime }(x_i)& = &\frac{1}{2}[T_{i+1}+T_i], \quad
\mbox{for} \quad i=0,1,2, \ldots ,n-1,
\end{eqnarray}
and
\begin{eqnarray}\label{7.6}
T_{i} = \left \{\begin{array}{ll}
f_{i}, \quad \quad  \mbox{for} \quad 0 \leq i \leq \frac{n}{4} \quad
\mbox{and} \quad \frac{n}{4} < i \leq n, \\ \\
p_{i}s_{i}+ f_{i}+ r, \quad \quad \mbox{for} \quad \frac{n}{4} < i \leq
\frac{3n}{4},
\end{array} \right.
\end{eqnarray}
where $f_i = f(x_i) $ and $p_i = p(x_i).$

Using the above discussion, we obtain the following relations
\begin{eqnarray}\label{7.7}
a_i & = & \frac{1}{12}[T_{i+1} + T_i], \nonumber \\
b_i & = & \frac{1}{h^2}[s_{i+1}-s_i]-\frac{1}{h}[T_{i+1}+ T_i], \nonumber
\\
c_i & = & D_i, \\
d_i & = & s_i,   \quad   i=0,1,2,\ldots ,n-1.  \nonumber
\end{eqnarray}
Now from the continuity of the cubic spline $s(x)$ and its derivatives
up to  order two at the point $(x_i,s_i) $ where the two cubic
$P_{i_1}(x)$ and $P_i(x) $ join, we can have
\begin{eqnarray}\label{7.8}
P^{(m)}_{i-1} = P_{i}^{(m)}, \quad \quad  m=0,1,2, \ldots
\end{eqnarray}
From the above relations, one can easily obtain the following consistency
relations
\begin{eqnarray}
h[D_i +D_{i-1}] & = & 2[s_i-s_{i-1}] + \frac{h^3}{12}[T_i +T_{i-1}],\label{7.9} \\
h[D_i +D_{i-1}] & = & s_{i+1}-2s_i + s_{i-1}
-\frac{h^3}{12}[T_{i+1}+3T_i+2T_{i-1}]. \label{7.10}
\end{eqnarray}
From (\ref{7.9}) and (\ref{7.10}), we obtain
\begin{eqnarray}\label{7.11}
2hD_i = s_{i+1}-S_{i-1}-\frac{h^3}{12}[T_{i+1}+ 2T_i + T_{i-1}].
\end{eqnarray}
Eliminating $D_i $ from (\ref{7.11}).(\ref{7.10}) and (\ref{7.9}), we have
\begin{eqnarray}\label{7.12}
-s_{i-2}+3s_{i-1}-3s_i+s_{i+1} = \frac{1}{12}h^3[T_{i-2}+5T_{i-1}+5T_i
+ T_{i+1}],
\end{eqnarray}
for $ i = 2,3,  \ldots ,n-1.$ The recurrence relations (\ref{7.12}) gives
$(n-2)$ linear equations in the unknowns $s_i, i = 1,2, \ldots ,n.$  We
need two more equations one at each end of the range of integration. These
two equations are:
\begin{eqnarray}\label{7.13}
3s_0 -4s_1 + S_2 & = & -2hD_0 + \frac{h^3}{12}[3T_0 +4T_1 +T_2], \quad i
=1, \\
-3s_{n-2} + 8s_{n-1} -5s_n & = & -2hD_{n+1} + \frac{h^3}{12}[3T_{n-2}
+10T_{n-1} + 31T_n],  i = n.
\end{eqnarray}
The cubic spline solution of (\ref{7.3}) is based on the linear equations given
by (\ref{7.11})-(\ref{7.12}).
The local truncation errors $T_i, \quad i =1,2, \ldots $ associated with
the cubic spline method  are given by
\begin{eqnarray*}
t_{i} = \left \{\begin{array}{lll}
-\frac{1}{10}h^5u^{(5)}(\zeta _1) + O(h^6), \quad a < \zeta _1 < x_2 \quad
i =1, \\ \\
-\frac{1}{6}h^5u^{(5)}(\zeta _i) + O(h^6), \quad x_{i-2} < \zeta _i <
x_{i+1} \quad  2 \leq i \leq n -1, \\ \\
-\frac{1}{10}h^5u^{(5)}(\zeta _n) + O(h^6), \quad x_{n-2} < \zeta _n < b,
\end{array} \right.
\end{eqnarray*}
which indicates that this method is a second order convergent process.\\

To illustrate the applications of the numerical methods developed above,
we consider the third order-order obstacle boundary value problem (\ref{2.15}).
Following the penalty function technique of Levy and Stampacchia [64], the
variational inequality (\ref{7.13}) can be written as
\begin{eqnarray}\label{7.14}
\la Tu, g(v) \ra + \la \nu \{u-\psi \}(u-\psi ),g(v) \ra = \la f, g(v)
\ra, \quad \mbox{for all } \quad g(v) \in H,
\end{eqnarray}
where $ \nu \{t\} $ is the discontinuous function defined by
\begin{eqnarray}\label{7.15}
\nu \{t\} = \left \{\begin{array}{ll}
1, \quad \quad  \mbox{for} \quad t \geq 0 \\ \\
0, \quad \quad \mbox{for} \quad  t < 0
\end{array} \right.
\end{eqnarray}
is known as the penalty  function and $ \psi < 0 $ on the boundary is the
called the obstacle function. It is clear that problem (\ref{2.15}) can be
written in the form
\begin{eqnarray}\label{7.16}
-u^{\prime \prime \prime } + \nu \{u-\psi \}(u-\psi ) = f, \quad  \quad 0
< x < 1.
\end{eqnarray}
with
\begin{eqnarray*}
u(0) = u^{\prime }(0) \quad =\quad u^{\prime }(1) = 0
\end{eqnarray*}
where $ \nu \{t\}$ is defined by (\ref{7.15}). If the obstacle function $\psi $
is known and is given by the relation
\begin{eqnarray}\label{7.18}
\psi (x) = \left \{\begin{array}{ll}
-1, \quad \quad \mbox{for} \quad 0 \leq x \leq \frac{1}{4} \quad
\mbox{and} \quad \frac{3}{4} \leq x \leq 1 \\ \\
1, \quad \quad  \mbox{for} \quad \frac{1}{4} \leq x \leq
\frac{3}{4},\end{array} \right.
\end{eqnarray}
then problem (\ref{2.15}) is equivalent to the following system of third-order
differential equations
\begin{eqnarray}\label{7.19}
u^{\prime \prime \prime } = \left \{\begin{array}{ll}
f, \quad \quad \mbox{for} \quad 0 \leq x \frac{1}{4} \quad \mbox{and}
\frac{3}{4} \leq x \leq 1 \\ \\
u +f -1 \quad \quad \mbox{for}\quad  \frac{1}{4} x \leq \frac{3}{4},
\end{array} \right.
\end{eqnarray}
with the boundary conditions
\begin{eqnarray}\label{7.20}
u(0) = u^{\prime } (0) = u^{\prime}(1) = 0
\end{eqnarray}
and the conditions of continuity of $u, u^{\prime }$ and $u^{\prime \prime
} $ at $x = \frac{1}{4} $ and $\frac{3}{4}.$ It is obvious that problem
(\ref{7.19}) is a special case of problem (\ref{7.1}) with $p(x) =1 $ and $r =-1.$
\vskip .4pc
Note that for $f =0, $ the system of differential equations (\ref{7.19})
reduces to
\begin{eqnarray}\label{7.21}
u^{\prime \prime \prime } = \left \{ \begin{array}{ll}
0, \quad \quad \mbox{for} \quad 0 \leq x \leq \frac{1}{4} \quad \mbox{and}
\quad \frac{3}{4} \leq x \leq 1 \\ \\
u-1, \quad \quad \mbox{for} \quad \frac{1}{4} \leq x \leq \frac{3}{4}
\end{array} \right.
\end{eqnarray}
with the boundary condition (\ref{7.20}).
\vskip .4pc
The analytical solution for  this problem is
\begin{equation}\label{7.22}
u(x) = \left\{ \begin{array}{ll}
\frac{1}{2} a_{1} x^{2}, \quad & 0 \leq x \leq \frac{1}{4} \\ \\
1 + a_{2} e^{x}  + e^{- \frac{x}{2}} [ a_{3} \cos
\frac{\sqrt{3}}{2} x + a_{4} \sin \frac{\sqrt{3}}{2} x ], \quad &
\frac{1}{4} \leq x \leq \frac{3}{4} \\ \\
\frac{1}{2} a_{5} x(x-2) + a_{6}, \quad &
\frac{3}{4} \leq x  \leq 1.
\end{array} \right.
\end{equation}
To find the constants $a_{i}, \; i = 1,2, \ldots , 6$, we apply
the continuity conditions of $u, u^{\prime}$  and
$u^{\prime \prime}$  at $x = \frac{1}{4}$  and $\frac{3}{4}$,
which leads to the following system of linear equations
\begin{displaymath}
\left[ \begin{array}{llllll}
\frac{1}{32} & -S_{1}  & - S_{2} CS_{1}  & - S_{2} S C_{1} & \ \,
0 & \ \, 0 \\ \\
\frac{1}{4} & -S_{1}  & \ \, \frac{1}{2} S_{2} (\sqrt{3} SC_{1}
+ CS_{1})  & - \frac{1}{2} S_{2} (\sqrt{3} CS_{1} - S C_{1}) &
\ \, 0 & \ \, 0 \\ \\
1 & -S_{1}  & -\frac{1}{2} S_{2} (\sqrt{3} SC_{1} - CS_{1})  &
\ \, \frac{1}{2} S_{2} (\sqrt{3} CS_{1} + S C_{1}) & \ \, 0 &
\ \, 0 \\ \\
0 & \ \, S_{3}  & \ \, S_{4} CS_{2}  & \ \, S_{4} S C_{2} &
\ \, \frac{15}{32} & -1 \\ \\
0 & \ \, S_{3}  & -\frac{1}{2} S_{4} (\sqrt{3} SC_{2}+ CS_{2})  &
\ \, \frac{1}{2} S_{4} (\sqrt{3} CS_{2} - S C_{2}) & \ \,
\frac{1}{4} & \ \, 0 \\ \\
0 & \ \, S_{3}  & \ \ \, \frac{1}{2} S_{4}
(\sqrt{3} SC_{2} - CS_{2})  & \ \, \frac{1}{2} S_{4}
(- \sqrt{3} CS_{2} - S C_{2}) & -1 & \ \, 0
\end{array} \right] \left[ \begin{array}{c}
a_{1} \\ \\
a_{2} \\ \\
a_{3} \\ \\
a_{4} \\ \\
a_{5} \\ \\
a_{6}    \end{array} \right] = \left[ \begin{array}{c}
\ \, 1 \\ \\
\ \, 0 \\ \\
\ \, 0 \\ \\
-1 \\ \\
\ \, 0 \\ \\
\ \, 0  \end{array} \right],
\end{displaymath}
where
\begin{displaymath}
S_{1} = \exp (\frac{1}{4}), \; S_{2} = \exp ( - \frac{1}{8}),
\; S_{3} = \exp ( \frac{3}{4}), \; S_{4} = \exp (- \frac{3}{8}),
\end{displaymath}
\begin{displaymath}
CS_{1} = \cos \frac{\sqrt{3}}{8}, \; SC_{1} = \sin
\frac{\sqrt{3}}{8},  \; CS_{2} = \cos \frac{3 \sqrt{3}}{8} \; \;
\mbox{and} \;  \; SC_{2} = \sin  \frac{3 \sqrt{3}}{8}.
\end{displaymath}
One can find the exact solution of this system of linear equations
by using Gaussian elimination method.\\

For various values of $h,$ the system of third-order of boundary value
problem defined by (\ref{7.19}) and (\ref{7.20}) was solved using the numerical
method developed in this section. A detailed comparison is given in Table 3.
\begin{center}
Table 3 : Observed maximum errors $\| {\bf e} \|$ for problem (6.x).

\begin{tabular}{ c c c c }    \hline
$h$ & Quartic spline & Cubic spline  & Colloc-quintic \cite{3}\\ \hline
$\frac{1}{16}$ & 1.15 $ \times 10^{-3}$ & 1.23 $ \times 10^{-3}$ &
1.26 $\times 10^{-3}$ \\
$\frac{1}{32}$ & 5.32 $ \times 10^{-4}$ & 5.53 $ \times 10^{-4}$ &
5.60 $\times 10^{-4}$ \\
$\frac{1}{64}$ & 2.56 $ \times 10^{-4}$ & 2.61 $ \times 10^{-4}$ &
3.10 $\times 10^{-4}$ \\
$\frac{1}{128}$ & 1.26 $ \times 10^{-4}$ & 1.27 $ \times 10^{-4}$ &
1.61 $\times 10^{-4}$ \\ \hline
\end{tabular}
\end{center}
\vskip .4pc
From Table 3, it is clear that quartic spline method
gives better results than cubic and quintic splines methods
developed earlier for solving system of third-order boundary value
systems.\\ \\
 For more details for solving various classes of  obstacle boundary
value problems using the penalty technique, see \cite{3,4,5,114,149} and the references therein.
In recent years, homotopy (analysis)  perturbation method, Adomonian decomposition , Laplace transformation and variational iteration techniques
are being used to find the analytical solutions of  fractional unilateral and obstacle boundary value problems.

\section{ General Equilibrium Problems}

In this section, we introduce and consider a class of equilibrium problems
known as general equilibrium problems. It is known that equilibrium
problems \cite{19,141} include variational and complementarity problems as special
cases. We note that the projection and its variant forms including the
Wiener-Hopf equations cannot be extended to equilibrium problems, since it
is not possible to find the projection of the bifunction $F (.,.).$
Noor \cite{94,95,121} used the auxiliary principle technique to analyse some iterative methods
for equilibrium problems. In this chapter, we introduce and study a class of equilibrium problems
involving the arbitrary function, which is called the general equilibrium problem.
We show that the auxiliary principle technique can be used
to suggest and analyze some iterative methods for solving general
equilibrium problems. We also study the convergence analysis of these
iterative methods and discuss some special cases.\\
 For  given nonlinear function $F(.,.) : H\times H \longrightarrow  R$
and operator $g : H \longrightarrow R, $ we consider the problem of
finding $u \in H, \quad g(u) \in K $ such that
\begin{eqnarray}\label{8.1}
F(u,g(v)) \geq 0, \quad  \forall  g(v) \in K,
\end{eqnarray}
which is known as the {\it general equilibrium problem}.\\
 We now discuss some special cases of general equilibrium problem (\ref{8.1}),\\
 \noindent{\bf(I).} For $g \equiv I, $ the identity operator,  problem (\ref{8.1}) is equivalent to finding $u \in K $
such that
\begin{eqnarray}\label{8.2}
F(u,v ) \geq 0, \quad \forall \quad v \in K,
\end{eqnarray}
which is called the equilibrium problem, which was introduced and studied
by Blum and Oettli \cite{19}. For the recent applications and development, see
\cite{102,109,152} and the reference therein.\\
\noindent{\bf (II).} If $F(u,g(v)) = \la Tu, \eta (g(v),g(u)) \ra $ and the set $K_{g\eta}  $ is an invex set
in $H, $ then problem (\ref{8.1}) is equivalent to finding $u \in K_{g\eta} $ such that
\begin{eqnarray}\label{8.3}
\la Tu, \eta (g(v),g(u)) \ra \geq 0, \quad \forall v \in K_\eta.
\end{eqnarray}
Inequality of type (\ref{8.3}) is known as the general variational-like inequality,
which arises as a minimum of general preinvex functions on the general invex set $K_{g\eta}. $ \\

\noindent{\bf (III).}  We  note that, for $F(u,g(v) \equiv \la Tu, g(v)-g(u) \ra, $ problem (\ref{8.1})
reduces to problem (\ref{2.5}), that is, find $u \in K, \quad g(u) \in K $ such
that
\begin{eqnarray*}
\la Tu,g(v)-g(u) \ra \geq 0, \quad \forall g(v) \in K,
\end{eqnarray*}
which is exactly the general variational inequality (\ref{2.5}). Thus we
conclude that general equilibrium problems (\ref{8.1}) are quite general and unifying one.
 \\

We now use the auxiliary principle technique as
developed in Section 6 to suggest and analyze some iterative methods for
solving general equilibrium problems (\ref{8.1}).\\
For a given $u \in H, \quad g(u) \in K $ satisfying (\ref{8.1}), consider the auxiliary
equilibrium problem of finding $ w \in H, g(w) \in K $ such that
\begin{eqnarray}\label{8.4}
\rho F(u,g(v) ) + \la g(w)-g(u),g(v)-g(w) \ra \geq 0, \quad \forall g(v) \in K.
\end{eqnarray}
Obviously, if $w = u, $ then $w $ is a solution of the general equilibrium
problem (\ref{8.1}). This fact allows us to suggest  the following
iterative method for solving (\ref{8.1}).\\

\begin{algorithm}\label{alg28}  For a given $u_0 \in H, $ compute the
approximate solution $u_{n+1} $ by the iterative scheme:
\begin{eqnarray}\label{8.5}
\rho F(w_n,g(v)) + \la g(u_{n+1})-g(w_n),g(v)-g(u_{n+1}) \ra \geq 0,
\quad \forall g(v) \in K.
\end{eqnarray}
\begin{eqnarray}\label{8.6}
\beta F(u_n,g(v) ) + \la g(w_n)-g(u_n),g(v)-g(w_n) \ra \geq 0, \quad
\forall g(v) \in K,
\end{eqnarray}
where $\rho > 0 $ and $ \beta > 0 $ are constants.
\end{algorithm}
Algorithm 28 is called the predictor-corrector method for solving
general equilibrium problem (\ref{8.1}).\\ For $ g = I, $ where $I $ is the
identity operator, Algorithm 28 reduces to:
\begin{algorithm}\label{alg29} For a given $u_0 \in H, $ compute the
approximate solution $u_{n+1} $ by the iterative schemes
\begin{eqnarray*}
\rho F(w_n, v) &+ &\la u_{n+1}-w_n, v-u_{n+1} \ra \geq 0, \quad \forall v \in K. \\
\beta F(u_n, v) & + & \la w_n-u_n, v-w_n \ra \geq 0, \quad \forall v \in K
\end{eqnarray*}
\end{algorithm}
Algorithm 29 is also a  predictor-corrector method for solving
equilibrium problem  and appears to be a new one.\\

If $F(u,g(v)) = \la Tu, g(v)-g(u) \ra , $ then Algorithm 28 becomes:

\begin{algorithm}\label{alg30} For a given $u_0 \in H, $ compute the
approximate solution $u_{n+1} $ by the iterative scheme
\begin{eqnarray*}
\la \rho Tw_n & + & g(u_{n+1})-g(w_n),g(v)-g(u_{n+1}) \ra \geq 0, \quad \forall g(v) \in K, \\
\la \beta Tu_n & + & g(w_n)-g(u_n),g(v)-g(w_n) \ra \geq 0, \quad \forall g(v) \in K,
\end{eqnarray*}
\end{algorithm}
which is a two-step method for solving general variational
inequalities (\ref{2.5}).\\

In brief, for suitable and appropriate choice of the functions $F(.,.) $
and the operators $T, g, $ one can obtain various algorithms developed in
the previous sections.\\

For the convergence analysis of Algorithm 28, we need the following
concepts.\\
\begin{definition}\label{def8.1} The function $F(.,.): H \times H
\longrightarrow H $ is said to be: \\
\noindent{\bf (i). {\it $g$-monotone} }, if
\begin{eqnarray*}
F(u,g(v) ) + F(v,g(u)) \leq 0, \quad \forall u,v \in H.
\end{eqnarray*}
\noindent{\bf (ii). {\it $g$-pseudomonotone, }} if
\begin{eqnarray*}
F(u,g(v) ) \leq 0, \quad \mbox{implies } \quad  F(v,g(u)) \leq 0, \quad
\forall u,v \in H.
\end{eqnarray*}
\noindent{\bf (iii). {\it $g$-partially relaxed strongly monotone, }} if
there exists a constant $ \alpha > 0 $ such that
\begin{eqnarray*}
F(u,g(v)) + F(v,g(z)) \leq \alpha \|g(z)-g(u)\|^2, \quad \forall   u,v,z \in H.
\end{eqnarray*}
\end{definition}
Note that for $u = z, $ $g$-partially relaxed strongly monotonicity
reduces to $g$-monotonicity of $F(.,.).$ \\ For $g = I, $ Definition \ref{def8.1}
coincides with the standard definition of monotonicity, pseudomonotonicity
of the function $F(.,.) $.

\vskip .2pc
We now consider the convergence analysis of Algorithm 28.  \\

\begin{theorem}\label{theorem8.1}Let $ \bar{u} \in H $ be a solution of (\ref{8.1}) and let $u_{n+1} $ be an approximate solution obtained from
Algorithm 28. If the bifunction $F(.,.) $ is $g$-partially relaxed
strongly monotone with constant $ \alpha > 0, $ then
\begin{eqnarray}
\|g(\bar{u})-g(u_{n+1})\|^2 &\leq & \|g(\bar{u})-g(w_n) \|^2 -(1-2\alpha
\rho ) \|g(w_n)-g(u_{n+1})\|^ 2 \label{8.7}\\
\|g(\bar{u})-g(w_{n})\|^2 &\leq & \|g(\bar{u})-g(u_n) \|^2 -(1-2\beta
\rho ) \|g(w_n)-g(u_{n})\|^ 2.  \label{8.8}
\end{eqnarray}
\end{theorem}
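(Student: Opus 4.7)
The plan is to prove (8.7) and (8.8) by the same mechanism; they are symmetric under the substitutions $(w_n,u_{n+1},\rho)\leftrightarrow(u_n,w_n,\beta)$. I will write up (8.7) in detail and note that (8.8) is identical up to a renaming of variables.

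First I would exploit that $\bar u$ is a solution of the general equilibrium problem (8.1), which gives $F(\bar u,g(v))\ge 0$ for every $v\in H$ with $g(v)\in K$. Specializing this twice yields $F(\bar u,g(u_{n+1}))\ge 0$ and $F(\bar u,g(w_n))\ge 0$, both of which will be fed into the $g$-partially relaxed strongly monotone inequality.

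Next I would test the corrector step (8.5) with $g(v)=g(\bar u)\in K$, obtaining
\begin{eqnarray*}
\langle g(u_{n+1})-g(w_n),g(\bar u)-g(u_{n+1})\rangle\;\ge\;-\rho\,F(w_n,g(\bar u)).
\end{eqnarray*}
To control the right hand side, I would apply Definition 8.1(iii) with the triple $(u,v,z)=(w_n,\bar u,u_{n+1})$, which reads
\begin{eqnarray*}
F(w_n,g(\bar u))+F(\bar u,g(u_{n+1}))\;\le\;\alpha\,\|g(u_{n+1})-g(w_n)\|^2.
\end{eqnarray*}
Combined with $F(\bar u,g(u_{n+1}))\ge 0$ from the previous paragraph this yields the crucial one-sided estimate $F(w_n,g(\bar u))\le \alpha\,\|g(u_{n+1})-g(w_n)\|^2$, and hence
\begin{eqnarray*}
\langle g(u_{n+1})-g(w_n),g(\bar u)-g(u_{n+1})\rangle\;\ge\;-\rho\alpha\,\|g(u_{n+1})-g(w_n)\|^2.
\end{eqnarray*}

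Finally I would apply the elementary identity $2\langle a,b\rangle=\|a+b\|^2-\|a\|^2-\|b\|^2$ with $a=g(u_{n+1})-g(w_n)$ and $b=g(\bar u)-g(u_{n+1})$, noting $a+b=g(\bar u)-g(w_n)$, to convert the inner product estimate into norm squares and arrive at (8.7) after rearrangement. The inequality (8.8) follows by running exactly the same three moves on the predictor step (8.6): test with $g(v)=g(\bar u)$, apply Definition 8.1(iii) with $(u,v,z)=(u_n,\bar u,w_n)$ together with $F(\bar u,g(w_n))\ge 0$, and then use the polarization identity. No step presents a genuine obstacle; the only thing to be careful about is choosing the correct permutation of arguments in the partially relaxed monotonicity condition so that the $F(\bar u,\cdot)\ge 0$ consequence of $\bar u$ being a solution is what survives on the left hand side.
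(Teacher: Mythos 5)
Your proposal is correct and follows essentially the same route as the paper's own proof: use $F(\bar u,g(\cdot))\ge 0$ from the solution property, test (8.5) (resp.\ (8.6)) with $g(v)=g(\bar u)$, add the two inequalities, invoke Definition 8.1(iii) with exactly the permutations $(w_n,\bar u,u_{n+1})$ and $(u_n,\bar u,w_n)$, and finish with the identity $2\langle a,b\rangle=\|a+b\|^2-\|a\|^2-\|b\|^2$. The only cosmetic difference is that you isolate $F(w_n,g(\bar u))\le\alpha\|g(u_{n+1})-g(w_n)\|^2$ before substituting, whereas the paper bounds the sum directly; the content is identical.
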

\begin{proof} Let $ g(\bar{u} )  \in H $ be a solution of (\ref{8.1}).  Then
\begin{eqnarray}
\rho F(\bar{u}, g(v) ) &\geq & 0, \quad \forall g(v) \in K. \label{8.9}\\
\beta F(\bar{u},g(v)) & \geq & 0, \quad \forall g(v) \in K,\label{8.10}
\end{eqnarray}
where $\rho > 0 $ and $\beta > 0 $ are constants.

Now taking $ v= u_{n+1} $ in (\ref{8.5}) and $v = \bar{u} $ in (\ref{8.9}), we have
\begin{eqnarray}\label{8.11}
\rho F(\bar{u}, g(u_{n+1})) \geq 0
\end{eqnarray}
and
\begin{eqnarray}\label{8.12}
\rho F(w_n,g(\bar{u})) + \la g(u_{n+1})- g(w_n),g(\bar{u})-g(u_{n+1})\ra
\geq 0.
\end{eqnarray}
Adding (\ref{8.11}) and (\ref{8.12}), we have
\begin{eqnarray}\label{8.13}
\la g(u_{n+1})-g(w_n),g(\bar{u})-g(u_{n+1})& \geq & -\rho
\{F(w_n,g(\bar{u})+ F(\bar{u},g(u_{n+1}))\} \nonumber \\
& \geq & -\alpha \rho \|g(u_{n+1})-g(w_n)\|^2,
\end{eqnarray}
where we have used the fact that $F(.,.) $ is $g$-partially relaxed
strongly monotone with constant $\alpha > 0 .$ Using  the inequality
$$ 2\langle a,b \rangle - \|a+b\|^2-\|a\|^2-\|b\|^2, \forall a,b \in H, $$ we obtain
\begin{eqnarray}\label{8.14}
2\la g(u_{n+1})-g(w_n),g(\bar{u})- g(u_{n+1}) \ra & = &
\|g(\bar{u})-g(w_n)\|^2 - \|g(\bar{u})-g(u_{n+1})\|^2 \nonumber \\
&& -\|g(u_{n+1})-g(w_n)\|^2.
\end{eqnarray}
Combining (\ref{8.13}) and (\ref{8.14}), we have
\begin{eqnarray}\label{8.15}
\|g(\bar{u})-g(u_{n+1})\|^2 \leq \|g(\bar{u})-g(w_n)\|^2 -(1-2\rho \alpha
)\|g(w_n)-g(u_{n+1})\|^2,
\end{eqnarray}
the required (\ref{8.7}).\\
Taking $ v = \bar{u} $ in (\ref{8.6}) and $ v = w_n $ in (\ref{8.10}), we obtain
\begin{eqnarray}\label{8.16}
\beta F(\bar{u},g(w_n) ) \geq 0
\end{eqnarray}
and
\begin{eqnarray}\label{8.17}
\beta F(u_n,g(\bar{u})) + \la g(w_n)-g(u_n), g(\bar{u})-g(w_n) \ra \geq 0.
\end{eqnarray}
Adding (\ref{8.16}), (\ref{8.17}) and rearranging the terms, we have
\begin{eqnarray}\label{8.18}
\la g(w_n)-g(u_n), g(\bar{u})-g(w_n) \ra \geq -\alpha \beta \|g(u_n)-g(w_n)\|^2,
\end{eqnarray}
since $F(.,.) $ is $g$-partially strongly monotone with constant $\alpha > 0. $\\
Consequently, from (\ref{8.18}), we have
\begin{eqnarray*}
\|g(\bar{u})-g(w_n)\|^2  \leq  \|g(\bar{u})-g(u_n)\|^2  - (1-2\alpha
\beta )\|g(u_n)-g(w_n) \|^2 ,
\end{eqnarray*}
the required (\ref{8.8}).
\end{proof}

\begin{theorem}\label{theorem8.2} Let $H$ be a finite dimension subspace and
let $ 0 < \rho < \frac{1}{2\alpha }$ and\\  $ 0 < \beta < \frac{1}{2\alpha
}.$  If $\bar{u} \in H :g(\bar{u}) \in K$ is a solution of
(\ref{8.1}) and $u_{n+1}$ is an approximate solution obtained from Algorithm 28,
then $$ \lim_{n \longrightarrow \infty }u_n = \bar{u}.$$
\end{theorem}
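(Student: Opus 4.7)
The plan is to combine the two a priori estimates furnished by Theorem \ref{theorem8.1} and run a standard Opial/Fejér-type monotonicity argument, specialised to finite dimensions so that boundedness automatically yields a cluster point.

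First I would chain the inequalities (\ref{8.7}) and (\ref{8.8}): since $1-2\alpha\rho>0$ and $1-2\alpha\beta>0$ by hypothesis, the second estimate discards a nonnegative term to yield $\|g(\bar u)-g(w_n)\|^2\le\|g(\bar u)-g(u_n)\|^2$, and then the first one gives
\begin{eqnarray*}
\|g(\bar u)-g(u_{n+1})\|^2 & \leq & \|g(\bar u)-g(u_n)\|^2 -(1-2\alpha\beta)\|g(w_n)-g(u_n)\|^2 \\
&& -(1-2\alpha\rho)\|g(w_n)-g(u_{n+1})\|^2.
\end{eqnarray*}
Thus $\{\|g(\bar u)-g(u_n)\|\}$ is nonincreasing, hence convergent, and $\{g(u_n)\}$ is bounded. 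Telescoping the above inequality over $n$ gives
\begin{eqnarray*}
\sum_{n=0}^{\infty}\Bigl\{(1-2\alpha\beta)\|g(w_n)-g(u_n)\|^2 +(1-2\alpha\rho)\|g(w_n)-g(u_{n+1})\|^2\Bigr\} \leq \|g(\bar u)-g(u_0)\|^2,
\end{eqnarray*}
so $\|g(w_n)-g(u_n)\|\to 0$ and $\|g(w_n)-g(u_{n+1})\|\to 0$, whence $\|g(u_{n+1})-g(u_n)\|\to 0$ by the triangle inequality.

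Next, because $H$ is finite dimensional and $\{u_n\}$ is bounded (assuming $g^{-1}$ exists, as used elsewhere in the paper, or working directly with the bounded sequence $\{g(u_n)\}$), there is a subsequence $u_{n_k}\to\hat u$. The limits $g(w_{n_k})\to g(\hat u)$ and $g(u_{n_k+1})\to g(\hat u)$ follow from the two norm-to-zero statements. Passing to the limit in the corrector step (\ref{8.5}) and using continuity of $F(\cdot,g(v))$ in its first slot, the quadratic cross term vanishes and we are left with $\rho F(\hat u,g(v))\ge 0$ for all $g(v)\in K$; dividing by $\rho>0$ shows $\hat u$ solves (\ref{8.1}).

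Finally, replace $\bar u$ by $\hat u$ in the Fejér-type estimate derived above: the sequence $\|g(\hat u)-g(u_n)\|$ is nonincreasing, and its subsequence $\|g(\hat u)-g(u_{n_k})\|$ tends to $0$, so the whole sequence $\|g(\hat u)-g(u_n)\|$ tends to $0$, giving $g(u_n)\to g(\hat u)$ and hence $u_n\to\hat u$. The anticipated main obstacle is the limit passage in the variational inequality (\ref{8.5}): one must justify that the term $\langle g(u_{n+1})-g(w_n),g(v)-g(u_{n+1})\rangle$ really disappears (this uses $\|g(u_{n+1})-g(w_n)\|\to 0$ together with boundedness of $g(v)-g(u_{n+1})$ on the chosen $v$) and that $F(w_{n_k},g(v))\to F(\hat u,g(v))$, which requires continuity of $F$ in its first argument — an implicit regularity assumption one should flag if not explicitly stated.
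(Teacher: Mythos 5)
Your proof is correct and is precisely the Fej\'er-monotonicity/cluster-point argument the paper intends: the paper gives no details here (it only refers to Noor \cite{122}), but the identical pattern appears in its own proofs of Theorems \ref{theorem5.3}--\ref{theorem5.4} and of the convergence results in Section 11, namely chaining the two estimates of Theorem \ref{theorem8.1}, telescoping to kill the residuals, extracting a cluster point in finite dimensions, passing to the limit in the corrector inequality, and then rerunning the Fej\'er estimate with the cluster point. Your explicit flagging of the hidden hypotheses --- continuity of $F(\cdot,g(v))$ in its first slot, existence and continuity of $g^{-1}$ to pass from $g(u_n)\to g(\hat u)$ to $u_n\to\hat u$, and the fact that the argument really yields convergence to \emph{some} solution $\hat u$ rather than to the prescribed $\bar u$ unless the solution is unique --- is a genuine improvement on what the paper states.
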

\begin{proof} Its proofs is very much similar to that of Noor \cite{122}.
\end{proof}

We again use  the auxiliary principle technique to suggest an inertial
proximal method for solving general equilibrium problem (\ref{8.1}). It is
noted that  inertial proximal method include the proximal method as a
special case.\\

For a given $u \in H, \quad g(u) \in K  $ satisfying (\ref{8.1}), consider the auxiliary general
equilibrium problem of finding $ w \in H, \quad g(w) \in K $ such that
\begin{eqnarray}\label{8.20}
\rho F(w,g(v)) + \la g(w)-g(u)-\alpha_n (g(u)-g(u)),g(v)-g(w) \ra \geq 0, \forall g(v) \in K,
\end{eqnarray}
where $\rho > 0 $ and $\alpha_n > 0 $ are constants.\\
It is clear that if $ w = u, $ then $ w$ is a solution of the general
equilibrium problem (\ref{8.1}). This fact enables us to suggest  an iterative
method for solving (\ref{8.1}) as:\\
\begin{algorithm}\label{alg31} For a given $ u_0 \in H, $ compute the
approximate solution $u_{n+1} $  by the iterative scheme
\begin{eqnarray*}\label{8.21}
&&(\rho F(u_{n+1},g(v))) + \la g(u_{n+1})-g(u_n) \nonumber \\
&&-\alpha _n (g(u_n)-g(u_{n-1})),g(v)-g(u_{n+1}) \ra
 \geq 0, \quad \forall g(v) \in K,
\end{eqnarray*}
\end{algorithm}
where $\rho > 0 $ and $\alpha _n > 0 $ are constants.

Algorithm 31 is called the inertial proximal point method. For $\alpha
_n =0, $ Algorithm 31 reduces to:

\begin{algorithm}\label{alg32}  For a given $u_0 \in H, $ find the
approximate solution $u_{n+1} $ by the iterative schemes
\begin{eqnarray*}
(\rho F(u_{n+1}),g(v))) + \la g(u_{n+1})-g(u_n),g(v)-g(u_{n+1}) \ra \geq 0,
\quad \forall g(v) \in K,
\end{eqnarray*}
\end{algorithm}
which is known as the proximal method and appears to be a new one. Note
that for $g \equiv I, $ the identity operator, one can obtain inertial
proximal method for solving equilibrium problems (\ref{8.2}).
In a similar way, using the technique of Noor \cite{122}, one can suggest and analyze
several new inertial type methods for solving general equilibrium problems.
It is an challenging  problem to compare the efficiency of these methods with other techniques
for solving general equilibrium problems.

\section{General variational-like inequalities}

It is well known that the minimum of the (non) differentiable preinvex
functions on the invex set can be characterized by a class of variational inequalities,
called variational-like inequalities. For the applications and numerical methods of variational-like inequalities,
see\cite{18,93,95,106} and the references therein. In this section, we introduce the general variational-like inequalities
 with respect to an arbitrary function. Due the structure of the general variational inequalities,
the projection method and its variant forms cannot
be used to study the problem of the existence of the solution.  This implies that the variational-like inequalities are not
equivalent to the projection (resolvent) fixed-point problems. We use
the  auxiliary principle technique to  suggest and analyze some implicit and explicit iterative methods
for solving variational-like inequalities. We also show that the
general variational-like inequalities are equivalent to the optimization problems,
which can be used to study the associated optimal control problem. Such
type of the problems have been not studied for general variational-like
inequalities and this is another direction for future research.\\

We recall some known basic concepts and results.

Let $F:K_{\eta} \rightarrow R$
be a continuous function and let $\eta(.,.) :K_{\eta}\times K_{\eta} \rightarrow R$ be
an arbitrary continuous bifunction. Let $g(.)$ be a non-negative function.

\begin{definition}\emph{\cite{14}} The set $K_{\eta}$ in $H$ is said to be invex set with
respect to an arbitrary  bifunction $\eta(\cdot,\cdot),$ if
\begin{eqnarray*}
u+t\eta(v,u)\in K,\quad\quad \forall u,v\in K_{\eta}, t\in[0,1].
\end{eqnarray*}
\end{definition}
The invex set $K_{\eta}$ is also called $\eta$-connected set. Note that the innvex set with $\eta(v,u)=v-u$
is a convex set, but the converse is not true. \\

From now onward,  $K_{\eta}$ is a nonempty closed invex set in $H$ with
respect to the bifunction $\eta(\cdot,\cdot),$ unless otherwise
specified.\\

\begin{definition}\emph{\cite{14}} The set $K_{g\eta}$ in $H$ is said to be general invex set with
respect to an arbitrary  bifunction $\eta(\cdot,\cdot)$ and the function $g, $if
\begin{eqnarray*}
g(u)+t\eta(g(v),g(u))\in K_{g\eta},\quad\quad \forall g(u),g(v)\in K_{g\eta}, t\in[0,1].
\end{eqnarray*}
\end{definition}
The invex set $K_{g\eta}$ is also called $g\eta$-connected set. Note that the general invex set with $\eta(g(v),g(u))=g(v)-g(u) $
is a general convex set, but the converse is not true. See Youness \cite{196}.\\
We now the concept of the general preinvex function.
\begin{definition}.\label{def9.1}  Let $K_{g\eta} \subseteq H$ be a general invex set
with respect to $\eta (.,.): K_{\eta} \times K_{\eta} \lrt R^n$ and $g: H \lrt  H. $ A
function $F: K_{g\eta}  \lrt R $ is said to be general preinvex function, if,
\begin{eqnarray*}
F(g(u)+t\eta (g(v),g(u))) \leq (1-t)F(g(u)) + t F(g(v)),
\\  \quad \forall   u,v \in H: g(u),g(v) \in K_{g\eta}, t \in [0,1],
\end{eqnarray*}
\end{definition}
Note that for $g \equiv I$, the  general preinvex functions are called the
preinvex functions.  For $\eta (v,u) = g(v)-g(u),$  general preinvex function are
known as general convex functions.  Every convex function is a general convex
function and every general convex function is a general preinvex function, but the
converse is not true.  see [11,43].\\

{\it From now onward, we assume that the set $K_{g\eta} $ is a general invex set with
respect to the functions $\eta (.,.) : K_{g\eta} \times K_{g\eta} \lrt H,\quad  g: K_{g\eta}
\lrt H ,$   unless otherwise specified.}

\begin{definition}\label{def9.2}  The function $F $ is said to
be general semi preinvex, if
\begin{eqnarray*}
F(g(u)+t\eta (g(v),g(u))) \leq (1-t)F(u)+tF(v), \\
\quad \forall  u,v \in H: g(u),g(v) \in K_{g\eta}, \quad t\in [0,1].
\end{eqnarray*}
\end{definition}
For $g \equiv I,$ and $t= 1,$ we have
\begin{eqnarray*}
F(u+\eta (v,u)) \leq  F(v), \quad  \forall   u,v \in K_{g\eta}.
\end{eqnarray*}

\begin{definition}\label{def9.3} The function $F $ is called general quasi  preinvex, if
\begin{eqnarray*}
F(g(u)+t \eta (g(v),g(u))) \leq \max\{F(g(u)),F(g(v))\},\\  \quad \forall  u,v \in H: g(u),g(v) \in K_{g\eta}, \quad t \in [0,1].
\end{eqnarray*}
\end{definition}
The function $F$ is called the strictly  general quasi preinvex, if strict
inequality holds for all $g(u),g(v) \in K_{g\eta}, g(u) \neq g(v).$ The function $F$ is said
to be  general quasi preconcave, if and only if,$-F$ is general quasi preinvex. A
function which is both general quasi preinvex and general quasi preconcave is called the general quasimonotone

\begin{definition}\label{def9.4} The function $F $ is said to be general
logarithmic preinvex on the general invex set $K_{g\eta}$ with respect to the
bifunction $\eta(.,.)  $ and the function  $g,$ if
\begin{eqnarray*}
F(g(u)+t\eta (g(v),g(u))) \leq (F(g(u)))^{1-t}(F(g(v)))^t, \\ \quad \forall  u,v \in H; g(u),g(v) \in K_{g\eta}, \quad t \in [0,1],
\end{eqnarray*}
where $F(.) > 0.$
\end{definition}
Clearly for $t = 1, $ and $ g = I, $ we have
\begin{eqnarray*}
F(u + \eta (v,u)) \leq F(v), \quad  \forall  u,v \in K_{g\eta}.
\end{eqnarray*}

It  follows that\\

general logarithmic preinvexity $\Longrightarrow $ general preinvexity
$\Longrightarrow $  general quasi preinvexity.  \\

For appropriate and suitable choice of the operators  and spaces, one can
obtain several classes of generalized preinvexity.

In this section, we prove that the minimum of a differentiable general preinvex function on
the general invex sets can be characterized by a class of variational-like
inequalities, which is called the general variational-like inequality..

\begin{theorem}\label{theorem9.1}
Let $ F$ be a differentiable general preinvex. Then $u \in H: g(u) \in K_{g\eta} $ is
a minimum of $F$ on $K_{\eta},$ if and only if, $u \in H: g(u)\in K_{g\eta}$ satisfies
\begin{eqnarray}\label{eq9.1}
\la F'(g(u)), \eta (g(v),g(u)) \ra \geq 0, \quad \forall v\in H: g(v) \in K_{g\eta},
\end{eqnarray}
where $F'$ is the Frechet derivative of $F$ at $g(u) \in K_{g\eta}.$
\end{theorem}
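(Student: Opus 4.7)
The proof plan is to mirror the strategy used in Theorem~\ref{theorem1} (for differentiable general convex functions), adapting it to the general invex setting where the ``direction'' from $g(u)$ toward $g(v)$ is given by $\eta(g(v),g(u))$ rather than $g(v)-g(u)$. The crucial structural input is that the general invex set $K_{g\eta}$ is closed under the parametrization $t \mapsto g(u)+t\eta(g(v),g(u))$ for $t\in[0,1]$, so we can perturb along this curve inside $K_{g\eta}$ and then take $t\to 0^+$.

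For the forward implication, I would assume $u\in H$ with $g(u)\in K_{g\eta}$ is a minimum of $F$ on $K_{g\eta}$, so $F(g(u))\le F(g(v))$ for all $v\in H$ with $g(v)\in K_{g\eta}$. Fix such a $v$; by general invexity, $g(v_t):=g(u)+t\eta(g(v),g(u))\in K_{g\eta}$ for every $t\in[0,1]$. Substituting $g(v_t)$ into the minimum inequality and then applying the general preinvexity of $F$ from Definition~\ref{def9.1} yields
\begin{eqnarray*}
F(g(u)) \;\le\; F(g(u)+t\eta(g(v),g(u))) \;\le\; (1-t)F(g(u)) + tF(g(v)),
\end{eqnarray*}
which rearranges to
\begin{eqnarray*}
\frac{F(g(u)+t\eta(g(v),g(u)))-F(g(u))}{t} \;\le\; F(g(v))-F(g(u)).
\end{eqnarray*}
Letting $t\to 0^+$ on the left using the Fr\'echet differentiability of $F$ at $g(u)$ in the direction $\eta(g(v),g(u))$ produces $\langle F'(g(u)),\eta(g(v),g(u))\rangle \le F(g(v))-F(g(u))$; but the left-hand side of this directional-derivative computation is exactly the limit from above of a nonnegative quantity (since $F(g(u))$ is the minimum), giving $\langle F'(g(u)),\eta(g(v),g(u))\rangle\ge 0$, i.e.\ (\ref{eq9.1}).

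For the converse, assume $u\in H$, $g(u)\in K_{g\eta}$ satisfies (\ref{eq9.1}). Take any $v\in H$ with $g(v)\in K_{g\eta}$. By general preinvexity and the same parametrization $g(v_t)=g(u)+t\eta(g(v),g(u))\in K_{g\eta}$,
\begin{eqnarray*}
F(g(u)+t\eta(g(v),g(u))) \;\le\; (1-t)F(g(u))+tF(g(v)),
\end{eqnarray*}
so
\begin{eqnarray*}
\frac{F(g(u)+t\eta(g(v),g(u)))-F(g(u))}{t} \;\le\; F(g(v))-F(g(u)).
\end{eqnarray*}
Letting $t\to 0^+$ and invoking (\ref{eq9.1}) gives $0\le \langle F'(g(u)),\eta(g(v),g(u))\rangle \le F(g(v))-F(g(u))$, hence $F(g(u))\le F(g(v))$ for all admissible $v$, so $g(u)$ minimizes $F$ on $K_{g\eta}$.

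The main obstacle is a conceptual rather than computational one: the limit
\begin{eqnarray*}
\lim_{t\to 0^+}\frac{F(g(u)+t\eta(g(v),g(u)))-F(g(u))}{t} = \langle F'(g(u)),\eta(g(v),g(u))\rangle
\end{eqnarray*}
must be interpreted correctly. Since $\eta(g(v),g(u))$ plays the role of $g(v)-g(u)$ in the classical argument, Fr\'echet differentiability of $F$ at $g(u)$ along the direction $\eta(g(v),g(u))$ must be explicitly assumed (as indicated by ``$F'$ is the Fr\'echet derivative of $F$ at $g(u)\in K_{g\eta}$''), and no further regularity of $\eta$ is needed for the one-sided directional computation. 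Once this identification is in place, the argument is a direct parallel to the convex case and no additional technicalities arise. \hfill $\Box$
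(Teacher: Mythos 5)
Your proposal is correct and follows essentially the same route as the paper's proof: parametrize along $g(v_t)=g(u)+t\eta(g(v),g(u))\in K_{g\eta}$, use minimality (forward direction) or general preinvexity plus (\ref{eq9.1}) (converse), and pass to the limit $t\to 0^+$ identifying the directional derivative with $\la F'(g(u)),\eta(g(v),g(u))\ra$. The only cosmetic difference is that you also invoke preinvexity in the forward direction, where minimality alone already gives the nonnegativity of the difference quotient; this is harmless.
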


\begin{proof}Let $ u\in H:g(u) \in K_{g\eta} $ be a minimum of the  $F. $ Then
\begin{eqnarray}\label{eq9.2}
F(g(u)) \leq F(g(v)), \quad \forall v\in H: g(v) \in K_{g\eta}.
\end{eqnarray}
Since the set $K_{\eta} $ is a general invex set, so  $ \forall  g(u), g(v) \in K_{g\eta}, $
and $t \in [0,1], $ $$g(v_t)= g(u) +\eta (g(v),g(u)) \in K_{g\eta}. $$
Setting $g(v)  = g(v_t) $ in (\ref{eq9.2}), we have
\begin{eqnarray*}
F(g(u)) \leq F(g(u)+ \eta (g(v),g(u))).
\end{eqnarray*}
Dividing the above inequality by $t$  and taking limit as $t \longrightarrow 0, $ we have
\begin{eqnarray*}
\la F^{\prime }(g(u)),\eta (g(v),g(u)) \ra \geq 0, \quad \forall v\in H: g(v) \in K_{g\eta}
\end{eqnarray*}
the required (\ref{eq9.1}).

Conversely, let $ u \in H:g(u) \in K_{g\eta} $ satisfy the inequality (\ref{eq9.1}). Then,
using the fact that function $F $ is a general preinvex function, we have
\begin{eqnarray*}
F(g(u)+t\eta (g(v),g(u)))- F(g(u)) \leq t\{F(g(v))-F(g(u))\}, \quad  g(u, g(v) \in K_{g\eta}.
\end{eqnarray*}
Dividing the above inequality by $t$ and letting $t \longrightarrow 0, $
we have
\begin{eqnarray*}
F(g(v))-F(g(u)) & \geq & \la F^{\prime }(g(u)),\eta (g(v),g(u))\ra  \geq  0, \quad \mbox{using (\ref{eq9.1}) }
\end{eqnarray*}
which implies that
\begin{eqnarray*}
 F(g(u)) \leq F(g(v)),
\end{eqnarray*}
which shows that $u \in H: g(u)\in K_{g\eta} $ is a minimum of the general preinvex
function on the general invex set $K_{g\eta}$ in $H. $
\end{proof}

Inequalities of the type (\ref{eq9.1}) are called the general
variational-like inequalities. For $g \equiv I,$ where $I$ is the identity
operator, Theorem \ref{theorem9.1} is mainly  due to Noor \cite{93}. From Theorem \ref{theorem9.1}  it
follows that the general variational-like inequalities (\ref{eq9.1}) arise
naturally in connection with the minimum of general preinvex function over general invex sets. In many
applications, problems like (\ref{eq9.1}) do not arise as a result
of minimization. This fact motivated us to  consider a problem of finding a solution of a more
general variational-like inequality of which (\ref{eq9.1}) is a special case.\\

Given (nonlinear) operator $T: H \longrightarrow H,$ and $\eta :K_{g\eta}\times K_{g\eta}
\longrightarrow R,$ where $K_{g\eta} $ is a  nonempty general invex set in $ H, $ we
consider the problem of finding $ u \in H: g(u)\in K_{g\eta}$ such that
\begin{eqnarray}\label{eq9.3}
\langle Tu, \eta (g(v),g(u)) \rangle  \geq 0,  \quad   v \in H: g(v) \in K_{g\eta},
\end{eqnarray}
which  is known as the  general variational-like  inequality.\\

If $\eta (v,u) = g(v)-g(u) ,$  then the general invex set $K_{g\eta}$
becomes a general convex set $K_g.$ In this case, problem (\ref{eq9.3}) is equivalent to
finding $u \in H : g(u) \in K_g $ such that
\begin{eqnarray*}
\langle Tu, g(v)-g(u) \rangle  \geq 0, \quad \forall  v\in H: g(v) \in K_g,
\end{eqnarray*}
which is exactly  the general  variational inequality(\ref{2.5}).
 For formulation, numerical methods, sensitivity analysis, dynamical system and other aspects of general variational inequalities, see \cite{109,110,122,143} and the references therein.\\

For suitable and appropriate choice of the operators $T,$ $\eta ,$ and the
general invex set, one may derive a wide class of known and new variational
inequalities as special cases of problem (\ref{eq9.3}). It is clear that
general variational-like inequalities provide us a  framework to study a wide class of unrelated problems in  a unified setting.\\

We now use  the auxiliary principle technique to suggest and analyze some iterative methods for general
variational-like inequalities (\ref{eq9.3}). \\

For a given $u \in H; g(u) \in K_{g\eta }$ satisfying (\ref{eq9.3}), consider the problem of finding a  solution $w \in H: g(w)
\in K_{g\eta}$ satisfying the auxiliary variational-like inequality
\begin{eqnarray}\label{eq9.4}
\langle \rho Tw+E'(g(w))-E'(g(u)), \eta (g(v),g(w)) \rangle   \geq 0, \quad \forall v\in H: g(v) \in K_{g\eta},
\end{eqnarray}
where $\rho > 0 $ is a constant and $E^{\prime } $ is the differential of
a strongly general preinvex function $E. $ The inequality (\ref{eq9.4}) is called
the auxiliary general variational-like inequality.\\

Note that if $w =u $, then clearly $w$ is solution of the general variational-like
inequality (\ref{eq9.3}). This observation enables us to suggest the following
algorithm for solving (\ref{eq9.3}).
\vskip .3pc

\begin{algorithm}\label{alg9.1} For a given $u_0 \in H,$ compute
the approximate solution $u_{n+1}$ by the iterative scheme
\begin{eqnarray}\label{9.5}
\langle \rho Tu_{n+1}+E'(g(u_{n+1}))-E'(g(u_n)),\eta (g(v),g(u_{n+1}))\rangle  \geq 0, \\
\quad \forall v\in H; g(v) \in K_{g\eta}.\nonumber
\end{eqnarray}
\end{algorithm}
Algorithm \ref{alg9.1} is called the proximal point algorithm for solving
the general variational-like inequalities (\ref{eq9.3}).\\

For $ \eta (g(v),g(u))= g(v)-g(u) , $ general preinvex function $E$ is equivalent to the
convex function and the invex set $K_{g\eta} $ becomes the general  convex set. Consequently Algorithm \ref{alg9.1} reduces to:
\begin{algorithm}\label{alg9.2} For a given $u_0 \in K_g,$ compute the
approximate solution $u_{n+1}$ by the iterative scheme
\begin{eqnarray*}
\la \rho Tu_{n+1}+E'(g(u_{n+1}))-E'(g(u_n)), g(v)- g(u_{n+1}) \ra \geq 0, \quad \forall g(v) \in K_g,
\end{eqnarray*}
\end{algorithm}
which is known as the proximal point algorithm for solving general variational
inequalities.

\begin{remark}
The function $$B(g(w),g(u)) = E(g(w))-E(g(u))-\la
E'(g(u)),\eta (g(w),g(u)) \ra $$ associated with the preinvex functions $E(u) $ is
called the general Bregman function. We note that,if $\eta (g(v),g(u)) =
g(v)-g(u), $ then $$B(g(w),g(u))= E(g(w))-E(g(u))-\la E'(g(u)),g(v)-g(u) \ra $$ is the well known
Bregman function. For the applications of Bregman function in solving
variational inequalities and related optimization problems, see \cite{200}.
\end{remark}

We now study the convergence analysis of Algorithm \ref{alg9.1}. For this purpose,
we recall the following concepts.

\begin{definition}\label{def9.15}  $ \forall u,v,z \in H,$ an operator
$T: H \lrt H $ is said with respect to an arbitrary function $g: H \lrt H $  to be: \\
(i). \quad {\it  general $g\eta $-pseudomonotone,} if
\begin{eqnarray*}
\la Tu, \eta (g(v),g(u)) \ra \geq 0 \quad \Longrightarrow \quad \la Tv, \eta
(g(v),g(u)) \ra \geq 0.
\end{eqnarray*}
(ii). \quad {\it general $g\eta $-Lipschitz continuous, } if there exists a constant
$\beta > 0 $ such that
\begin{eqnarray*}
\la Tu-Tv, \eta (g(u),g(v) \ra \leq \beta \|g(u)-g(v)\|^2.
\end{eqnarray*}
(iii). \quad {\it general  $g\eta $-cocoercive, } if there exists a constant $\mu >
0 $ such that
\begin{eqnarray*}
\la Tu-Tv, \eta (g(u),g(v)) \ra \geq \mu \|T(g(u))-T(g(v))\|^2.
\end{eqnarray*}
(iv). \quad {\it general  $g\eta $-partially relaxed strongly monotone, } if there
exists a constant $\alpha > 0 $ such that
\begin{eqnarray*}
\la Tu-Tv, \eta (g(z),g(v)) \ra \geq \mu \|g(z)-g(u)\|^2.
\end{eqnarray*}
\end{definition}

For $ \eta (g(v),g(u)) = g(v)-g(u)  , $ definition \ref{def9.15}  reduces to the
definition of general monotonicity, general Lipschitz continuity, general co-coercivity
and partially relaxed general strongly monotonicity of the operator $T.$ We note
that for $g(z)=g(u), $ partially strongly monotonicity reduces to
monotonicity. One can easily show that general $g\eta$-cocoercivity implies general $g\eta $-partially relaxed general strongly monotonicity, but
the converse is not true.

\begin{definition}\label{def9.16} A function $F $ is said to be strongly
general preinvex function on $K_{g\eta} $ with respect to the function $ \eta (.,.)$ with
modulus $\mu > 0$ and function $g, $ if,  \\ $\forall u,v \in H: g(u), g(v) \in K_{g\eta} , t \in [0,1], $ such that
\begin{eqnarray*}
F(g(u)+t\eta (g(v),g(u)))\leq (1-t)F(g(u))+tF(g(v)) -t(1-t)\mu\|\eta(g(v),g(u))\|^2.
\end{eqnarray*}
\end{definition}
We note that the differentiable strongly  general preinvex function $F $ implies the
strongly general invex function, that is,
\begin{eqnarray*}
F(g(v))-F(g(u)) \geq \la F^{\prime }(g(u)), \eta (g(v),g(u))\ra + \mu \|\eta(g(v),g(u))\|^2,
\end{eqnarray*}
but the converse is also true under some conditions.

\begin{assumption}\label{assump9.1}    $ \forall u,v,z \in H,$ the operator
$\eta :H\times H \lrt H $  and the function $g $ satisfy the condition
\begin{eqnarray*}
\eta (g(u),g(v)) = \eta (g(u),g(z)) + \eta (g(z),g(v)).
\end{eqnarray*}
\end{assumption}
In particular, from Assumption \ref{assump9.1}, we  obtain
$$\eta (g(u),g(v)) = -\eta (g(v),g(u)) $$ and \\$$\eta (g(v),g(u)) = - \eta (g(u),g(v)), \forall
 g(u),g(v), \in H. $$ Assumption \ref{assump9.1} has been used  to
study the existence of a solution of general variational-like inequalities.\\
\begin{theorem}\label{thm9.2}Let $T$ be a general $\eta $-pseudomonotone operator.
Let $E$ be a strongly differentiable general preinvex function with modulus
$\beta $ and Assumption \ref{assump9.1} hold. Then the approximate solution $u_{n+1}$
obtained from Algorithm \ref{alg9.1} converges to a solution of (\ref{eq9.3}).
\end{theorem}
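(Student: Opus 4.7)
The plan is to adapt the classical auxiliary-principle convergence argument, using the generalized Bregman-type function
\[
B(g(w),g(u)) := E(g(w))-E(g(u))-\langle E'(g(u)),\eta(g(w),g(u))\rangle,
\]
associated with the strongly general preinvex function $E$, as a Lyapunov functional measuring the distance from the current iterate to a solution $u\in H:g(u)\in K_{g\eta}$ of (\ref{eq9.3}). Strong general preinvexity with modulus $\beta$ immediately yields the fundamental bound
\[
B(g(w),g(u))\ \geq\ \beta\,\|\eta(g(w),g(u))\|^{2}.
\]

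First I would exploit the assumed solution $u$ and the general $g\eta$-pseudomonotonicity of $T$: from $\langle Tu,\eta(g(v),g(u))\rangle\geq 0$ with $v=u_{n+1}$ and pseudomonotonicity, one gets $\langle Tu_{n+1},\eta(g(u_{n+1}),g(u))\rangle\geq 0$. Then, setting $v=u$ in the defining inequality of Algorithm~\ref{alg9.1} and using Assumption~\ref{assump9.1} (specifically $\eta(g(u),g(u_{n+1}))=-\eta(g(u_{n+1}),g(u))$), the term $\rho\langle Tu_{n+1},\eta(g(u),g(u_{n+1}))\rangle$ is nonpositive, so one extracts the inequality
\[
\langle E'(g(u_{n+1}))-E'(g(u_n)),\,\eta(g(u),g(u_{n+1}))\rangle\ \geq\ 0.
\]

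Next I would compute $B(g(u),g(u_n))-B(g(u),g(u_{n+1}))$. Using Assumption~\ref{assump9.1} to split $\eta(g(u),g(u_n))=\eta(g(u),g(u_{n+1}))+\eta(g(u_{n+1}),g(u_n))$, the cross terms regroup into $B(g(u_{n+1}),g(u_n))$ plus exactly the inner product shown to be nonnegative in the previous step. Applying the strong preinvexity bound therefore gives
\[
B(g(u),g(u_n))-B(g(u),g(u_{n+1}))\ \geq\ \beta\,\|\eta(g(u_{n+1}),g(u_n))\|^{2}\ \geq\ 0.
\]
Hence $\{B(g(u),g(u_n))\}$ is nonincreasing and bounded below by $0$, so it converges, which forces $\|\eta(g(u_{n+1}),g(u_n))\|\to 0$. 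In particular $\{u_n\}$ is bounded (through the coercivity implicit in the strong preinvexity of $E$).

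Finally, I would extract a cluster point $\bar u$ of $\{u_n\}$ and pass to the limit in the inequality defining Algorithm~\ref{alg9.1}: since $\eta(g(u_{n+1}),g(u_n))\to 0$, the term $E'(g(u_{n+1}))-E'(g(u_n))$ vanishes in the limit (assuming the standard continuity of $E'$ and $g$), and continuity of $T$ and $\eta(\cdot,\cdot)$ then yields $\langle T\bar u,\eta(g(v),g(\bar u))\rangle\geq 0$ for all $g(v)\in K_{g\eta}$, i.e.\ $\bar u$ solves (\ref{eq9.3}). Uniqueness of the cluster point (and hence full convergence of the sequence) follows from rerunning the Lyapunov estimate with $u$ replaced by $\bar u$. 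The main obstacle I expect is the last step: controlling the limit of the implicit term $E'(g(u_{n+1}))$ and of $\eta(g(v),g(u_{n+1}))$ requires continuity hypotheses on $E'$, $g$ and $\eta$ that are not explicitly isolated in the statement, and the pseudomonotonicity allows only weak passage to the limit, so one typically needs a Minty-type trick (using pseudomonotonicity on the limiting inequality) rather than a direct substitution.
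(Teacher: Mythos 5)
Your proposal matches the paper's proof essentially step for step: the same Bregman functional $B(g(u),g(u_n))$ as Lyapunov function, the same use of general $\eta$-pseudomonotonicity at the solution together with the choice $v=u$ in the algorithm inequality and Assumption \ref{assump9.1} to obtain the decrease $B(g(u),g(u_n))-B(g(u),g(u_{n+1}))\geq \beta\,\|\eta(g(u_{n+1}),g(u_n))\|^{2}$, and the same cluster-point conclusion. The continuity and limit-passage issues you flag at the end are genuine but are equally left implicit in the paper, which at that point simply defers to ``the technique of Zhu and Marcotte.''
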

\begin{proof} Since the function  $E$ is strongly general preinvex ,
so the solution $u_{n+1}$ is unique. Let $u \in H: g(u) \in K_{\eta} $ be a solution of the
general variational-like inequality (\ref{eq9.3}).  Then
\begin{eqnarray*}
\la Tu, \eta (g(v),g(u)) \ra  \geq 0, \quad \forall v\in H; g(v) \in K_{\eta},
\end{eqnarray*}
which implies that
\begin{eqnarray}\label{eq9.6}
\la Tv, \eta (g(v),g(u)) \ra  \geq 0,
\end{eqnarray}
since $T$ is a general $\eta $-pseudomonotone.
 Taking $v = u_{n+1} $ in (\ref{eq9.6}), we have
\begin{eqnarray}\label{eq9.7}
\la Tu_{n+1},\eta (g(u_{n+1}), g(u) )\ra \geq 0.
\end{eqnarray}
We consider the Bregman function
\begin{eqnarray}\label{eq9.8}
B(g(u),g(w)) & = & E(g(u))-E(g(w))-\la E'(g(u)),\eta (g(v),g(u)) \ra \nonumber \\
  & \geq & \frac{\beta }{2}\|\eta(g(u),g(w))\|^2,
\end{eqnarray}
 using strongly general preinvexity. \\
Now
\begin{eqnarray}\label{eq9.9}
B(g(u),g(u_n))-B(g(u),g(u_{n+1}))& = & E(g(u_{n+1}))-E(g(u_n)) \nonumber \\ &&+ \la E'(g(u_{n+1})),\eta
(g(u),g(u_{n+1})) \ra \nonumber \\
&& - \la E'(g(u_n)), \eta (g(u),g(u_n)) \ra
\end{eqnarray}
Using Assumption \ref{assump9.1}, we have
\begin{eqnarray}\label{eq9.10}
\eta (g(u),g(u_n)) = \eta (g(u),g(u_{n+1}))+\eta (g(u_{n+1}),g(u_n) ).
\end{eqnarray}
Combining (\ref{eq9.7}), (\ref{eq9.8}), (\ref{eq9.9}) and (\ref{eq9.10}) , we have
\begin{eqnarray*}
&&B(g(u),g(u_n))-B(g(u),g(u_{n+1}))\\
 & = & E(g(u_{n+1}))-E(g(u_n))-\la E'(g(u_n)),\eta(g(u_{n+1}),g(u_n)) \nonumber \\
&& + \la E'(g(u_{n+1}))-E'(g(u_n)), \eta (g(u),g(u_{n+1})) \ra \\
& \geq & \beta \|\eta(g(u_{n+1}),g(u_n))\|^2 + \la E'(g(u_{n+1}))-E'(g(u_n)), \eta
(g(u),g(u_{n+1})) \ra   \\
& \geq & \beta \|\eta(g(u_{n+1}),g(u_n))\|^2+\la \rho  Tu_{n+1}, \eta (g(u_{n+1}),g(u))
\ra   \\
& \geq & \beta \|\eta(g(u_{n+1}),g(u_n))\|^2 , \q \mbox{using (\ref{eq9.8}).}
\end{eqnarray*}
If $g(u_{n+1}) = g(u_n), $ then clearly $g(u_n)$ is a solution of the
general variational-like inequality (\ref{eq9.3}). Otherwise, it follows that
$B(u,u_n)-B(u,u_{n+1}) $ is nonnegative , and we must have
\begin{eqnarray*}
\lim_{n \rt \infty }\|\eta(g(u_{n+1}),g(u_n))\| = 0.
\end{eqnarray*}
Now using the technique of Zhu and Marcotte \cite{200}, one can easily show
that the entire sequence $\{u_n\}$ converges to the cluster point
$\bar{u}$ satisfying the variational-like inequality (\ref{eq9.3}).
\end{proof}

To implement the proximal method, one has to calculate the solution
implicitly, which is in itself a difficult problem. We again use the
auxiliary principle technique to suggest another iterative method, the
convergence of which requires only the $g\eta $-partially relaxed strongly
general monotonicity.\\

For a given $u \in H: g(u) \in K_{g\eta}, $ satisfying (\ref{eq9.3}), find a solution $w \in H: g(w) \in K_{g\eta} $ such that
\begin{eqnarray}\label{eq9.11}
\la \rho Tu +E^{\prime }(g(w))-E^{\prime }(g(u)), \eta (g(v),g(w)) \ra \geq 0, \quad
\forall v \in H: g(v) \in K_{\eta},
\end{eqnarray}
which is called the
auxiliary general  variational-like inequality, where  $E(u) $ is a  differentiable strongly general preinvex function. It is clear
that,  if $w = u, $ then $w$ is a solution of the general variational-like
inequality (\ref{eq9.3}). This fact allows us to suggest and analyze the
following iterative method for solving (\ref{eq9.3}).

\begin{algorithm}\label{alg9.35}
For a given $u_0 \in H, $ compute the approximate solution $u_{n+1} $ by the iterative scheme
\begin{eqnarray}\label{eq9.12}
\la \rho Tu_n+ E^{\prime }(g(u_{n+1}))-E^{\prime }(g(u_n)), \eta (g(v), g(u_{n+1}))
\ra \geq 0,\\
 \quad \forall v \in H: g(v) \in K_{g\eta}.\nonumber
\end{eqnarray}
\end{algorithm}

 Note that for $\eta (g(v),g(u) ) = g(v)-g(u), $ Algorithm \ref{alg9.35} reduces to:
\begin{algorithm}\label{alg9.36} For a given $u_0 \in H_g, $ find the
approximate solution $u_{n+1} $ by the iterative scheme
\begin{eqnarray*}
\la \rho Tu_n +E^{\prime }(g(u_{n+1}))-E^{\prime }(g(u_n)), g(v)-
g(u_{n+1})\ra \geq 0, \quad \forall v \in H: g(v) \in K_{g\eta},
\end{eqnarray*}
\end{algorithm}
 Algorithm \ref{alg4} for solving general variational inequalities  appears
to be a new one. In a similar way, one can obtain a number of new and
known iterative methods for solving various classes of variational
inequalities and complementarity problems.\\

We now study the convergence analysis of Algorithm \ref{alg9.35}. The analysis is
in the spirit of Theorem \ref{thm9.2}. We only give the main points.\\

\begin{theorem}\label{thm9.3} Let $T$ be a partially relaxed
strongly general $g\eta$- monotone with  a constant $\alpha > 0. $
Let $E$ be a  differentiable strongly general preinvex function with modulus
$\beta $ and Assumption \ref{assump9.1} hold. If $ 0 < \rho < \frac{\beta
}{\alpha ,}$ then the approximate solution $u_{n+1}$
obtained from Algorithm \ref{alg9.35} converges to a solution of (\ref{eq9.3}).
\end{theorem}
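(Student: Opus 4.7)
The plan is to mirror the auxiliary principle / Bregman function argument from Theorem 9.2, but replace the pseudomonotonicity step by one that exploits partial relaxed strong $g\eta$-monotonicity of $T$. Uniqueness of $u_{n+1}$ at each step follows from strong general preinvexity of $E$, exactly as in the previous proof, so I would dispose of that at the outset and then focus on convergence.

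The first substantive step is to produce the key one-step descent inequality. Let $u \in H$ with $g(u)\in K_{g\eta}$ be a solution of (9.3). Take $v = u_{n+1}$ there and use Assumption 9.1 (in particular $\eta(g(u),g(u_{n+1})) = -\eta(g(u_{n+1}),g(u))$) to rewrite this as $\langle Tu,\eta(g(u),g(u_{n+1}))\rangle \le 0$. Next, set $v = u$ in the corrector (9.12):
\begin{eqnarray*}
\langle E'(g(u_{n+1}))-E'(g(u_n)),\eta(g(u),g(u_{n+1}))\rangle \ge -\rho\langle Tu_n,\eta(g(u),g(u_{n+1}))\rangle.
\end{eqnarray*}
Adding and subtracting $Tu$ on the right and using the solution inequality eliminates the $\langle Tu,\cdot\rangle$ term with the correct sign; then the partial relaxed strong $g\eta$-monotonicity of $T$ (with $z=u_{n+1}$) gives
\begin{eqnarray*}
\langle E'(g(u_{n+1}))-E'(g(u_n)),\eta(g(u),g(u_{n+1}))\rangle \ge -\rho\alpha\|g(u_{n+1})-g(u_n)\|^{2}.
\end{eqnarray*}

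Next I would set up the Bregman functional $B(g(u),g(w)) = E(g(u))-E(g(w))-\langle E'(g(w)),\eta(g(u),g(w))\rangle$ and compute $B(g(u),g(u_n)) - B(g(u),g(u_{n+1}))$. Splitting $\eta(g(u),g(u_n)) = \eta(g(u),g(u_{n+1})) + \eta(g(u_{n+1}),g(u_n))$ via Assumption 9.1, the difference reorganizes as
\begin{eqnarray*}
&&\bigl[E(g(u_{n+1}))-E(g(u_n))-\langle E'(g(u_n)),\eta(g(u_{n+1}),g(u_n))\rangle\bigr] \\
&&\qquad + \langle E'(g(u_{n+1}))-E'(g(u_n)),\eta(g(u),g(u_{n+1}))\rangle.
\end{eqnarray*}
The bracketed quantity is bounded below by $\beta\|\eta(g(u_{n+1}),g(u_n))\|^{2}$ by strong general preinvexity of $E$, and the second term was just estimated. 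Combining and using that $\|\eta(g(u_{n+1}),g(u_n))\|$ controls $\|g(u_{n+1})-g(u_n)\|$ (as in the proof of Theorem 9.2), we obtain
\begin{eqnarray*}
B(g(u),g(u_n))-B(g(u),g(u_{n+1})) \ge (\beta-\rho\alpha)\|g(u_{n+1})-g(u_n)\|^{2}.
\end{eqnarray*}

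With the hypothesis $0<\rho<\beta/\alpha$, the coefficient $\beta-\rho\alpha$ is strictly positive, so $\{B(g(u),g(u_n))\}$ is nonincreasing and nonnegative, hence convergent; telescoping forces $\|g(u_{n+1})-g(u_n)\|\to 0$. The sequence $\{u_n\}$ is therefore bounded, and passing to a cluster point $\bar{u}$ and taking limits in (9.12) gives $\langle T\bar{u},\eta(g(v),g(\bar{u}))\rangle\ge 0$ for every admissible $v$, so $\bar{u}$ solves (9.3). A standard Zhu--Marcotte argument, invoking the monotone decrease of $B(\bar{u},g(u_n))$, promotes subsequential convergence to convergence of the full sequence, completing the proof.

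The main obstacle is the chain of sign manipulations in the first step: one must align the antisymmetry coming from Assumption 9.1 with the correct substitution in the partial relaxed monotonicity inequality so that the monotonicity constant $\alpha$ enters with the right sign and is paired with $\|g(u_{n+1})-g(u_n)\|^2$ (and not with some other difference), since only then does the smallness condition $\rho<\beta/\alpha$ do its job.
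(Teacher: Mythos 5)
Your proposal follows essentially the same route as the paper's proof: the same substitutions ($v=u_{n+1}$ in the solution inequality, $v=u$ in the corrector), the same Bregman-difference decomposition via Assumption 9.1, the same add-and-subtract of $Tu$ followed by the partially relaxed strong $g\eta$-monotonicity with $z=u_{n+1}$, yielding the descent estimate $B(g(u),g(u_n))-B(g(u),g(u_{n+1}))\geq(\beta-\rho\alpha)\|\eta(g(u_{n+1}),g(u_n))\|^{2}$ and the Zhu--Marcotte conclusion. The argument is correct; if anything, you are slightly more careful than the paper in flagging the implicit identification of $\|\eta(g(u_{n+1}),g(u_n))\|$ with $\|g(u_{n+1})-g(u_n)\|$.
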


\begin{proof} Since the function  $E$ is strongly general preinvex ,
so the solution $u_{n+1}$ of (\ref{eq9.3}) is unique. Let $u \in H: g(u) \in  K_{g\eta}$ be a
solution of the general   variational-like inequality (\ref{eq9.3}).  Then
\begin{eqnarray*}
\la Tu, \eta (g(v),g(u)) \ra  \geq 0, \quad \forall v\in H: g(v)\in K_{\eta}.
\end{eqnarray*}
Taking $v = u_{n+1} $ in the above inequality, we have
\begin{eqnarray}\label{eq9.13}
\la \rho Tu,\eta (g(u_{n+1}),g(u) ) \ra \geq 0.
\end{eqnarray}
Combining (\ref{eq9.9}), (\ref{eq9.12}) and (\ref{eq9.13}), we have
\begin{eqnarray*}
&&B(g(u),g(u_n))-B(g(u),g(u_{n+1}))\\ & = & E(g(u_{n+1}))-E(g(u_n))-\la E'(g(u_n)),\eta (g(u_{(n+1}),g(u_n)) \\
&& + \la E'(g(u_{n+1}))-E'(g(u_n)), \eta (g(u),g(u_{n+1})) \ra  \\
& \geq & \beta \|\eta(g(u_{n+1}),g(u_n))\|^2 + \la E'(g(u_{n+1}))-E'(g(u_n)), \eta(g(u),g(u_{n+1})) \ra   \\
& \geq & \beta \|\eta(g(u_{n+1}),g(u_n))\|^2+\la \rho  Tu_n, \eta (g(u_{n+1}),g(u))  \ra,\\
& \geq & \beta \|\eta(g(u_{n+1}),g(u_n)\|^2+\la \rho  Tu_n-\rho Tu, \eta (g(u_{n+1}),g(u))\ra,   \\
& \geq & ( \beta -\rho \alpha )\|\eta(g(u_{n+1}),g(u_n))\|^2 .
\end{eqnarray*}
If $g(u_{n+1}) = g(u_n), $ then clearly $g(u_n)$ is a solution of the general
variational-like inequality (\ref{eq9.3}). Otherwise, the assumption $0 < \rho <
\frac{\alpha }{\beta }, $ implies that the sequence $B(g(u),g(u_n))-B(g(u),
g(u_{n+1}))$  is nonnegative , and we must have
\begin{eqnarray*}
\lim _{n \longrightarrow \infty }\|\eta(g(u_{n+1}),g(u_n) \| = 0.
\end{eqnarray*}
Now by using the technique of Zhu and Marcotte [164], it can be shown that
the entire sequence $\{u_n\} $ converges to the cluster point  $\bar{u} $
satisfying the variational-like inequality (\ref{eq9.3}).
\end{proof}

We now show that the solution of the auxiliary general variational-like inequality (\ref{eq9.11})
 is the minimum of the functional $I[g(w)] $ on the general invex set $K_{g\eta},$ where
\begin{eqnarray}\label{eq9.14}
I[g(w)] &=& E(g(w))-E(g(u)- \la E^{\prime }(g(u))-\rho Tu, \eta (g(w),g(u)) \ra \nonumber \\
&=& B(g(w),g(u))-\rho \la Tu, \eta (g(w),g(u)) )\ra,
\end{eqnarray}
is known as the auxiliary energy functional associated with the auxiliary
general variational-like inequality (\ref{eq9.11}), where $B(g(w),g(u)) $ is a general Bregman
function. We now prove that the minimum of the functional $I[w], $
defined by (\ref{eq9.14}), can be characterized by the general  variational-like
inequality (\ref{eq9.11}). \\

\begin{theorem}\label{thm9.3} Let $ E $ be a differentiable general  preinvex
function. If Assumption \ref{assump9.1} holds and $\eta (.,.)$ is prelinear in the
first argument, then the
minimum of $I[w],$ defined by (\ref{eq9.14}), can be characterized by the
auxiliary general variational-like inequality (\ref{eq9.3}).
\end{theorem}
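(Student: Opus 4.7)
My plan is to prove both implications of the characterization separately, using the standard variational argument in the forward direction and the general preinvexity inequality for $E$ in the converse direction.

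For the forward direction, I assume $w \in H$ with $g(w) \in K_{g\eta}$ minimizes the functional $I[\,\cdot\,]$ defined by (\ref{eq9.14}) over $K_{g\eta}$. For an arbitrary $v \in H$ with $g(v) \in K_{g\eta}$, I invoke general invexity of $K_{g\eta}$ to conclude that $w_t := g(w) + t\,\eta(g(v),g(w)) \in K_{g\eta}$ for every $t \in [0,1]$. The minimality gives $I[w_t] \geq I[g(w)]$, so dividing by $t > 0$ and letting $t \to 0^+$ produces a first-order necessary condition. The differentiability of $E$ yields $(E(w_t)-E(g(w)))/t \to \langle E'(g(w)),\eta(g(v),g(w))\rangle$. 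For the linear-in-$\eta$ piece $\langle E'(g(u))-\rho Tu,\eta(w_t,g(u))\rangle$, I first use Assumption \ref{assump9.1} to split $\eta(w_t,g(u)) = \eta(w_t,g(w)) + \eta(g(w),g(u))$, after which the prelinearity of $\eta$ in its first argument extracts the $O(t)$ term $\eta(w_t,g(w)) = t\,\eta(g(v),g(w)) + o(t)$. Collecting terms and rearranging gives
\begin{eqnarray*}
\langle \rho Tu + E'(g(w)) - E'(g(u)), \eta(g(v),g(w)) \rangle \geq 0, \quad \forall v \in H: g(v)\in K_{g\eta},
\end{eqnarray*}
which is precisely (\ref{eq9.11}).

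For the converse, assume $w$ satisfies (\ref{eq9.11}). For any $v \in H$ with $g(v) \in K_{g\eta}$, I compute directly from (\ref{eq9.14}) that
\begin{eqnarray*}
I[g(v)] - I[g(w)] = E(g(v)) - E(g(w)) - \langle E'(g(u)) - \rho Tu,\, \eta(g(v),g(u)) - \eta(g(w),g(u)) \rangle.
\end{eqnarray*}
Assumption \ref{assump9.1} gives $\eta(g(v),g(u)) - \eta(g(w),g(u)) = \eta(g(v),g(w))$. Because $E$ is differentiable and general preinvex, the analogue of the inequality established in the proof of Theorem \ref{theorem9.1} gives $E(g(v)) - E(g(w)) \geq \langle E'(g(w)), \eta(g(v),g(w))\rangle$. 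Substituting, I obtain
\begin{eqnarray*}
I[g(v)] - I[g(w)] \geq \langle \rho Tu + E'(g(w)) - E'(g(u)), \eta(g(v),g(w))\rangle \geq 0,
\end{eqnarray*}
where the final inequality is (\ref{eq9.11}). Hence $g(w)$ minimizes $I$ on $K_{g\eta}$.

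The only genuinely delicate step is the forward direction's limit computation, where the differentiation must pass through $\eta$'s first argument. Assumption \ref{assump9.1} reduces this to evaluating $\eta(w_t,g(w))/t$, and the prelinearity hypothesis is exactly what is needed to guarantee this quotient tends to $\eta(g(v),g(w))$; without it one cannot cleanly extract the first-order condition. Everything else is book-keeping with the definition of $I$ and the Bregman-type identity.
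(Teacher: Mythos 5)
Your proposal is correct and follows essentially the same route as the paper: in the forward direction, perturb along $g(v_t)=g(w)+t\eta(g(v),g(w))$, use prelinearity and Assumption \ref{assump9.1} to extract the first-order term, and pass to the limit; in the converse, combine the gradient inequality for the general preinvex function $E$ with the splitting $\eta(g(v),g(u))-\eta(g(w),g(u))=\eta(g(v),g(w))$ and then invoke (\ref{eq9.11}). Your write-up of the converse is in fact cleaner than the paper's display, and you correctly read the characterization as being with respect to (\ref{eq9.11}) (the reference to (\ref{eq9.3}) in the theorem statement is a typo).
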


\begin{proof} Let $w \in H: g(w) \in K_{\eta}$ be the minimum of $I[w]$ on $K_{g\eta}. $ Then
\begin{eqnarray*}
I[g(w)] \leq I[g(v)], \quad \forall v \in H: g(v) \in K_{g\eta}.
\end{eqnarray*}
Since $K_{g\eta} $ is a general  invex set, so for all $g(w),g(u) \in K_{g\eta}, t \in [0,1],  g(v_t) =
g(w)+t\eta (g(v),g(w))\in K_{g\eta}.$  Replacing $g(v) $ by $g(v_t) $ in the above inequality, we
have
\begin{eqnarray}\label{eq9.15}
I[g(w)] \leq I[g(w)+t \eta (g(v),g(w))].
\end{eqnarray}
Since $\eta (.,.)$ is prelinear in the first argument, so,  from (\ref{eq9.12}) and
(\ref{eq9.15}), we have
\begin{eqnarray*}
E(g(w))-E(g(u))&-&\la E'(g(u)) - \rho Tu,\eta (g(w),g(u)) \ra \\ & \leq &E(g(v_t))-E(g(u)) -\la
E'(g(u))-\rho Tu,\eta (g(v_t),g(u))\ra  \\
& \leq & E(g(v_t))-(1-t)\la E'(g(u))-\rho Tu,\eta (g(w),g(u)) \ra   \\
& &-t\la E'(g(u))-\rho Tu, \eta (g(w),g(u)) \ra,
\end{eqnarray*}
which implies that
\begin{eqnarray}\label{eq9.16}
E(g(w)+t\eta (g(v),g(w)))-E(g(w)) &\geq & t\la E'(g(u))-\rho Tu,\eta(g(v),g(w))\ra \nonumber \\
&&-t\la E'(g(u))-\rho Tu,\eta (g(w),g(u))\ra .
\end{eqnarray}
Now using Assumption \ref{assump9.1}, we have
\begin{eqnarray}
 \la E'(g(u)),\eta (g(v),g(u))\ra &=& \la E'(g(u)),\eta (g(v),g(w))\ra \nonumber \\&&+\la E'(g(u)), \eta
(g(w),g(u))\ra  \label{eq9.17}\\
\la Tu, \eta  (g(v),g(u))\ra & = &\la Tu,\eta (g(v),g(w))\ra + \la Tu, \eta (g(w),g(u))\ra.\label{9.18}
\end{eqnarray}
From (\ref{eq9.15}), (\ref{eq9.16}), (\ref{eq9.17}) and (\ref{9.18}),  we obtain
\begin{eqnarray*}
E(g(w)+t\eta (g(v),g(w)))-E(g(w)) \geq  t\la E'(g(u))-\rho Tu, \eta (g(v),g(w))\ra.
\end{eqnarray*}
Dividing both sides by $t$ and letting $t \rightarrow 0,$ we have
\begin{eqnarray*}
\la E'(g(w)), \eta (g(v),g(w))\ra \geq \la E'(g(u))-\rho Tu, \eta (g(v),g(w))\ra,
\end{eqnarray*}
the required inequality (\ref{eq9.11}).\\

Conversely, let $u \in H: g(u) \in K_{\eta} $ be a solution of (\ref{eq9.11}). Then
\begin{eqnarray*}
I[g(w)]-I[g(v)] &  =  & E(g(w))-E(g(v))-\la E'(g(u))-\rho Tu,\eta (g(w),g(u))\ra  \\
& &  + \la E'(g(u))-\rho Tu,\eta (g(v),g(u))\ra \\
  & \leq & -\la E'(g(w)),\eta (g(v),g(w))\ra \\ &&+ \la E'(g(u)), \eta (g(v),g(u))-\eta (g(w),g(u))\ra \\
& & -\rho \la Tu,\eta (g(v),g(u))- \eta (g(w),g(u))\ra  \\
& \leq & \la E'(g(u)),\eta (g(v),g(w))\ra -\la E'(g(w))-\rho Tu, \eta (g(v),g(w))\ra \\
& &  + \la E'(g(w))+ \rho Tu,\eta (g(v),g(w))\ra -\la E'(g(u)), \eta (g(v),g(w))\ra \\
& \leq & 0.
\end{eqnarray*}
Thus it follows that $I[g(w)] \leq I[g(v)],$ showing that $g(v) \in K_{\eta} $ is the
minimum of the functional $I[g(w)]$ on $K_{\eta} $, the required result.
\end{proof}

\section{Higher order strongly general  convex functions}

We would like to point out that the strongly convex functions were introduced and studied by Polyak \cite{158}, which
play an important part in the optimization theory and related areas. For example, Karmardian\cite{57} used
the strongly convex functions to discuss the unique existence of a solution of the nonlinear
complementarity problems. Strongly convex functions also played important role in the convergence analysis of
the iterative methods for solving variational inequalities and equilibrium
problems, see Zu and Marcotte \cite{200}.  Lin and Fukushima \cite{65} introduced the concept of higher order strongly convex functions and used it in the study of mathematical program with equilibrium constraints. These mathematical programs with equilibrium constraints are defined by a parametric variational inequality or complementarity system and play an important role in many fields such as engineering
design, economic equilibrium and multilevel game. These facts and observations
inspired  Mohsen et al\cite{75}to  consider higher order strongly convex function involving an arbitrary bifunction.
Noor and Noor \cite {139,139a} have introduced the higher order strongly general convex functions, which include the higher order strongly convex functions \cite{65,75} as special cases. \\ \\
 In this chapter,  we introduce concept of higher order strongly general convex functions.
 Several new concepts of monotonicity are introduced. Our results represent the refinement and improvement of
 the results of Lin and Fukushima \cite{65}. Higher order strongly general convex functions are used to obtain new  characterizations of the uniformly reflex Banach spaces by the parallelogram laws.  It is worth mentioning that the parallelogram laws have been  discussed in \cite{21,22,23,24,189}. \\

We now define the concept of higher order strongly general convex functions, which have been investigated  in \cite {139,139a}.
\begin{definition}\label{def10.1}\quad    A function $F$ on the convex set $K$ is said to be higher order
strongly general  convex with respect to the  function $g,$ if there exists a constant $\mu>0 $   such that
\begin{eqnarray*}
&&F(g(u)+t(g(v)-g(u)))\leq (1-t)F(g(u()+tF(g(v)) \\ &&-\mu \{t^p(1-t)+t(1-t)^p\}\|g(v)-g(u)\|^p,
 \quad \forall g(u),g(v)\in K_g, t\in[0,1], p>1.
\end{eqnarray*}
\end{definition}

A function $F$ is said to higher order strongly general concave, if and
only if, $-F$ is higher order strongly general convex.  \\
If $t=\frac{1}{2}, $  then
\begin{eqnarray*}
F\bigg(\frac{g(u(+g(v)}{2}\bigg)\leq  \frac{F(g(u))+F(g(v))}{2}- \mu\frac{1}{2^p}\| g(v)-g(u) \|^p,
 \forall g(u),g(v)\in K_g, p>1.
\end{eqnarray*}
The function $F$ is said to be higher order strongly general  $J$-convex function.\\ \\
We now discuss some special cases. \\ \\
\noindent{\bf I. }If $ p=2,$  then the higher order
strongly convex function becomes strongly convex functions, that
is,
\begin{eqnarray*}
F(g(u)+t(g(v)-g(u)))&\leq & (1-t)F(g(u))+tF(g(v))-\mu t(1-t)\| g(v)-g(u)\|^2,\\ && \forall g(u),g(v)\in K_g,
t\in[0,1].
\end{eqnarray*}

For the properties of the  strongly convex functions in variational
inequalities and equilibrium problems, see Noor \cite{95,122,129}.\\

\noindent{II.} If $g =  I,$  then Definition \ref{def10.1} reduces to
\begin{definition}\label{def2.1}\quad    A function $F$ on the convex set $K$ is said to be higher order
strongly convex,  if there exists a constant $\mu>0, $ such that
\begin{eqnarray*}
F(u+t(v-u))\leq (1-t)F(u)+tF(v)-\mu \{t^p(1-t)+t(1-t)^p\}\|v-u\|^p, p>1,
\\ \forall u,v\in K, t\in[0,1],
\end{eqnarray*}
\end{definition}
which  appears to be a new one.\\

For appropriate and suitable choice of the unction $g$ and $p, $ one can obtain various new and known classes of strongly convex functions.
This show that the higher order strongly convex functions involving the function $g $ is quite general and unifying one. One can explore the applications of the higher order strongly general convex functions, this is another direction of further research.\\

\begin{definition}\label{def10.2}\quad A  function $F$ on the convex set $K$ is said to be a higher order
strongly affine  general convex with respect to the function $g, $ if there exists a constant $\mu>0, $
such that
\begin{eqnarray*}\label{eq3}
F(g(u)&+&t(g(v)-g(u)))\leq  (1-t)F(g(u))+tF(g(v) \\ &&-\mu \{t^p(1-t)+t(1-t)^p\}\|g(v)-g(u)\|^p,
\forall g(u),g(v)\in K_g, t\in[0,1], p>1.
\end{eqnarray*}
\end{definition}
Note that if a functions is both higher order strongly convex and higher order strongly concave, then it is higher order strongly affine convex function.\\
\begin{definition}\label{def10.3}  A function $F$ is  called higher order strongly quadratic equation with respect to the function $g,$ if there exists a constant $\mu>0, $ such that
\begin{eqnarray*}\label{eq4}
F\bigg(\frac{g(u)+g(v)}{2}\bigg)&=& \frac{F(g(u))+F(g(v))}{2}-  \\
&&\mu \frac{1}{2^p}\|g(v)-g(u)\|^p,  \forall g(u),g(v)\in K_g, t\in[0,1], p>1.
\end{eqnarray*}
\end{definition}
This  function $F$ is also called  higher order  strongly affine general  $J$-convex function. \\

\begin{definition}\label{def10.4}
\quad A function $F$ on the convex  set $K$ is said to be higher order
strongly quasi convex,  if there exists  a constant $\mu>0$ such that
\begin{eqnarray*}
F(g(u)&+&t(g(v)-g(u))\leq \max\{F(g(u)),F(g(v))\} \\
  &&-\mu \{t^p(1-t)+t(1-t)^p\}\|g(v)-g(u)\|^p, \forall g(u),g(v)\in K_g, t\in[0,1], p>1.
\end{eqnarray*}
\end{definition}

\begin{definition}\label{def10.5} \quad A function $F$ on the convex  set $K$ is said to be higher order
strongly log-convex,  if there exists a constant $\mu>0$ such that
\begin{eqnarray*}
F(g(u)&+&t(g(v)-g(u))\leq (F(g(u)))^{1-t}(F(g(v)))^t \\&-&\mu \{t^p(1-t)+t(1-t)^p\}\|g(v)-g(u)\|^p, \forall g(u),g(v)\in K_g, t\in[0,1], p>1,
\end{eqnarray*}
where $F(\cdot)>0.$
\end{definition}

From the above definitions, we have
\begin{eqnarray*}
F(g(u)&+&t(g(v)-g(u))\leq (F(g(u)))^{1-t}(F(g(v)))^t\\ &&-\mu \{t^p(1-t)+t(1-t)^p\}\|g(v-g(u)\|^p \\
&\leq& (1-t)F(g(u))+tF(g(v))-\mu  \{t^p(1-t)+t(1-t)^p\}\|g(v)-g(u)\|^p\\
&\leq& \max\{F(g(u)),F(g(v))\}-\mu \{t^p(1-t)+t(1-t)^p\}\|g(v)-g(u)\|^p, p>1.
\end{eqnarray*}
This shows that every higher order  strongly general log-convex function is a higher order  strongly general convex function
and every higher order strongly general convex function is a higher order strongly general quasi-convex function. However, the converse is not true. \\
For appropriate and suitable choice of the arbitrary bifunction $g, $  one can obtain several new and known classes of strongly convex functions and their variant forms as special cases of generalized strongly convex functions. This shows that the class of higher order strongly general convex functions is quite broad and unifying one.

\begin{definition}\label{def10.6} \quad An operator $T:K\rightarrow H$ is said to be:\\
%\begin{enumerate}
  \noindent{\bf(i).} higher order   strongly  monotone, if and only if, there exists a
  constant $\alpha>0$ such that
  \begin{eqnarray*}
\langle Tu-Tv, g(u)-g(v)v\rangle\geq  \alpha\|g(u)-g(v)\|^p, \forall g(u),g(v)\in K_g.
\end{eqnarray*}

  \noindent{\bf (ii).} higher order   strongly pseudomonotone, if and only if, there exists  a
  constant $\nu> 0$ such that
  \begin{eqnarray*}
&&\langle Tu,g(v)-g(u)\rangle+\nu \|g(v)-g(u)\|^p \geq 0  \\
 && \Rightarrow \\
&&\langle Tv,g(v)-g(u)\rangle\geq 0,\forall g(u),g(v)\in K_g.
\end{eqnarray*}
  \noindent{\bf (iii). } higher order  strongly relaxed pseudomonotone, if and only if, there exists a
  constant $\mu> 0$ such that
  \begin{eqnarray*}
&&\langle Tu, g(v)-g(u)\rangle\geq 0 \\
&& \Rightarrow  \\
&&  -\langle  Tv, g(u)-g(v)\rangle+\mu \|\xi(v,u)\|^p\geq 0,\forall g(u),g(v)\in K_g.
\end{eqnarray*}
%\end{enumerate}
\end{definition}

\begin{definition}\label{eq10.7}  A differentiable function $F$ on the convex set $K_g$ is said to be
higher order  strongly pseudoconvex function, if and only if, if there exists
a constant $\mu>0$ such that
\begin{eqnarray*}
 \langle F'(u),g(v)-g(u)\rangle+\mu \|g(v)-g(u)\|^p\geq 0\Rightarrow F(v)\geq F(u),
  \forall g(u),g(v)\in K_g.
\end{eqnarray*}
\end{definition}

We now  consider some basic properties of higher order strongly general convex functions.\\

\begin{theorem}\label{theorem10.1} \quad Let $F$ be a differentiable function on the convex set $K_g .$
 Then the function $F$ is higher order  strongly general convex function,  if and only if,
\begin{eqnarray}\label{eq10.12}
F(g(v))-F(g(u))& \geq& \langle F^{\prime}(g(u)), g(v)-g(u) \rangle +\mu \|g(v)-g(u)\|^p ,  \\ && \forall g(v),g(u)\in K_g. \nonumber
\end{eqnarray}
\end{theorem}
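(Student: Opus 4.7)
The plan is to establish both directions of the equivalence by standard techniques adapted from the theory of strongly convex functions, exploiting the general convexity of $K_g$ to ensure all intermediate points lie in the admissible set.

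For the forward implication ($\Rightarrow$), I would start from the defining inequality of Definition \ref{def10.1}:
\begin{eqnarray*}
F(g(u)+t(g(v)-g(u))) \leq (1-t)F(g(u))+tF(g(v)) - \mu\{t^p(1-t)+t(1-t)^p\}\|g(v)-g(u)\|^p.
\end{eqnarray*}
Rearrange this as
\begin{eqnarray*}
\frac{F(g(u)+t(g(v)-g(u)))-F(g(u))}{t} \leq F(g(v))-F(g(u)) - \mu\{t^{p-1}(1-t)+(1-t)^p\}\|g(v)-g(u)\|^p,
\end{eqnarray*}
and let $t\to 0^+$. The left-hand side converges to the directional derivative $\langle F'(g(u)),g(v)-g(u)\rangle$ since $F$ is differentiable, while on the right the bracket $\{t^{p-1}(1-t)+(1-t)^p\}$ tends to $1$ (using $p>1$ so that $t^{p-1}\to 0$). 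Rearranging yields (\ref{eq10.12}).

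For the converse ($\Leftarrow$), fix $g(u),g(v)\in K_g$ and $t\in[0,1]$, and set $g(v_t)=g(u)+t(g(v)-g(u))\in K_g$, which is legitimate because $K_g$ is a general convex set. Apply the hypothesis (\ref{eq10.12}) at the base point $g(v_t)$ first with the second argument $g(u)$, and then with the second argument $g(v)$:
\begin{eqnarray*}
F(g(u)) - F(g(v_t)) &\geq& \langle F'(g(v_t)), -t(g(v)-g(u))\rangle + \mu\, t^p\,\|g(v)-g(u)\|^p,\\
F(g(v)) - F(g(v_t)) &\geq& \langle F'(g(v_t)), (1-t)(g(v)-g(u))\rangle + \mu\,(1-t)^p\,\|g(v)-g(u)\|^p.
\end{eqnarray*}
Multiplying the first inequality by $(1-t)$, the second by $t$, and adding, the linear terms in $F'(g(v_t))$ cancel exactly, and one obtains
\begin{eqnarray*}
(1-t)F(g(u)) + tF(g(v)) - F(g(v_t)) \geq \mu\{t^p(1-t)+t(1-t)^p\}\|g(v)-g(u)\|^p,
\end{eqnarray*}
which is precisely Definition \ref{def10.1}.

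The proof is essentially routine; the only subtlety that merits care is the bookkeeping in the backward direction, specifically ensuring that the coefficients $t^p(1-t)+t(1-t)^p$ emerge in the correct form after the convex combination, and that the linear terms cancel cleanly so that no hidden monotonicity assumption on $F'$ is needed. The argument uses only the general convex structure of $K_g$ and the differentiability of $F$, so no additional hypothesis on $g$ (such as linearity or convexity) is required.
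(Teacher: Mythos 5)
Your proof is correct and follows essentially the same route as the paper: the forward direction divides the defining inequality by $t$ and lets $t\to 0^+$ (using $p>1$ so the coefficient tends to $\mu$), and the converse applies the gradient inequality at the intermediate point $g(v_t)$ toward both $g(u)$ and $g(v)$, then forms the convex combination with weights $(1-t)$ and $t$ so the linear terms cancel and the remainder $\mu\{t^p(1-t)+t(1-t)^p\}\|g(v)-g(u)\|^p$ appears. Nothing further is needed.
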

\begin{proof} \quad Let $F$ be a higher order  strongly general convex function on the convex set
$K_g.$ Then
\begin{eqnarray*}
&&F(g(u)+t(g(v)-g(u))\leq (1-t)F(g(u))+tF(g(v))\\ &&-\mu \{t^p(1-t)+t(1-t)^p\}\|g(v)-g(u) \|^p,\quad\forall g(u),g(v) \in K_g
\end{eqnarray*}
which can be written as
\begin{eqnarray*}
F(g(v))-F(g(u))\geq \{\frac{F(g(u)+t(g(v)-g(u))-F(g(u))}{t}\}\\
+ \{t^{p-1}(1-t)+(1-t)^{p}\}\|g(v)-g(u)\|^p.
\end{eqnarray*}
Taking the limit in the above inequality as $t\rightarrow 0, $, we have
\begin{eqnarray*}
F(g(v))-F(g(u))\geq \langle F'(g(u)),g(v)-g(u))\rangle+ \mu \|g(v)-g(u)\|^p,\forall g(u),g(v)\in K_g.
\end{eqnarray*}
which is (\ref{eq10.12}), the required result. \\

Conversely, let (\ref{eq10.12}) hold.  Then,  $\forall g(u),g(v)\in K_g, t\in [0,1],$\\
$g(v_t)=g( )u+t(g(v)-g(u))\in K_g, $  we have
\begin{eqnarray}\label{eq10.13}
F(g(v))&-&F(g(v_t)) \geq \langle
F'(g(v_t)),g(v)-g(v_t))\rangle+\mu \|g(v)-g(v_t)\|^p\nonumber \\
&=&(1-t)\langle F'(g(v_t)),g(v)-g(u)\rangle+ \mu(1-t)^p\|g(v)-g(u)\|^p.
\end{eqnarray}
In a similar way, we have
\begin{eqnarray}\label{eq10.14}
F(g(u))-F(g(v_t))&\geq& \langle F'(g(v_t)),g(u)-g(v_t))\rangle+\mu \|g(u)-g(v_t)\|^p\nonumber \\
&=&-t\langle F'(g(v_t)),g(v)-g(u)\rangle+ \mu t^p\|g(v)-g(u)\|^p.
\end{eqnarray}
Multiplying (\ref{eq10.13}) by $t$ and (\ref{eq10.14}) by $(1-t)$ and adding
the resultants, we have
\begin{eqnarray*}
F(g(u)&+&t(g(v)-g(u))\leq(1-t)F(g(u))+tF(g(v))\\ &&- \mu \{t^p(1-t)+t(1-t)^p\}\|g(v)-g(u)\|^p, \forall  g(u),g(v) \in K_g,
\end{eqnarray*}
showing that $F$ is a higher order  strongly general convex function.
\end{proof}

\begin{theorem}\label{theorem10.2} \quad Let $F$ be a differentiable higher order strongly convex function on the convex set $K_g. $  Then
\begin{eqnarray}\label{eq10.15}
\langle F'(g(u))-F'(g(v)),g(u)-g(v)\rangle \geq 2\mu \{\|g(v)-g(u)\|^p, \forall g(u),g(v)\in K_g.
\end{eqnarray}
\end{theorem}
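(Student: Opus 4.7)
The plan is to derive the desired monotonicity-type inequality directly from the differential characterization established in Theorem~\ref{theorem10.1}. Since $F$ is a differentiable higher order strongly general convex function, Theorem~\ref{theorem10.1} furnishes the first-order inequality
\begin{eqnarray*}
F(g(v))-F(g(u)) \geq \langle F'(g(u)), g(v)-g(u)\rangle + \mu \|g(v)-g(u)\|^p, \quad \forall g(u),g(v) \in K_g.
\end{eqnarray*}

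First, I would write this inequality as stated, then write it again with the roles of $u$ and $v$ interchanged, yielding
\begin{eqnarray*}
F(g(u))-F(g(v)) \geq \langle F'(g(v)), g(u)-g(v)\rangle + \mu \|g(u)-g(v)\|^p.
\end{eqnarray*}
Adding the two inequalities, the left-hand sides cancel, and using $\|g(v)-g(u)\|^p = \|g(u)-g(v)\|^p$ on the right, I obtain
\begin{eqnarray*}
0 \geq \langle F'(g(u)), g(v)-g(u)\rangle + \langle F'(g(v)), g(u)-g(v)\rangle + 2\mu \|g(v)-g(u)\|^p.
\end{eqnarray*}

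Finally, I would rearrange by moving the $2\mu$-term to the left and grouping the two inner products as $\langle F'(g(u))-F'(g(v)), g(u)-g(v)\rangle$, arriving at (\ref{eq10.15}). There is essentially no obstacle here: the argument is the standard ``add the two first-order inequalities'' device used to pass from a gradient inequality to a monotonicity inequality, and the only ingredient beyond arithmetic is Theorem~\ref{theorem10.1}, which has already been proved. The only mild point to note is the symmetry $\|g(v)-g(u)\|^p=\|g(u)-g(v)\|^p$ that lets the two error terms combine cleanly into $2\mu\|g(v)-g(u)\|^p$.
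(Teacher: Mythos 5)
Your proposal is correct and follows exactly the same route as the paper: apply the first-order inequality of Theorem~\ref{theorem10.1} twice with the roles of $u$ and $v$ interchanged, add the two inequalities, and rearrange, using the symmetry of the norm to combine the error terms into $2\mu\|g(v)-g(u)\|^p$. No gaps.
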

\small{
\begin{proof}
Let $F$ be a higher order strongly general convex function on the convex set $K_g.$ Then,
from Theorem \ref{theorem10.1}. we have
\begin{eqnarray}\label{eq10.16}
F(g(v))-F(g(u))\geq \langle F'(g(u)),g(v)-g(u)\rangle+ \mu \|g(v)-g(u)\|^p,  \forall g(u),g(v) \in K_g.
\end{eqnarray}
Changing the role of $u$ and $v$ in (\ref{eq10.16}), we have
\begin{eqnarray}\label{eq10.17}
F(g(u))-F(g(v))\geq \langle F'(g(v)),g(u)-g(v))\rangle+ \mu \|g(v)-g(u))\|^p, \forall g(u),g(v) \in K_g.
\end{eqnarray}
Adding (\ref{eq10.16}) and (\ref{eq10.17}), we have
\begin{eqnarray}\label{eq10.18}
\langle F'(g(u))-F'(g(v)),g(u)-g(v)\rangle \geq 2\mu \{\|g(v)-g(u)\|^p, \forall g(u),g(v)\in K_g.
\end{eqnarray}
which shows that $F'(.)$ is a higher order strongly  general monotone operator.\\
\end{proof}
}
We  remark that the converse of Theorem \ref{theorem10.2} is not true. In this direction, we have the following result. \\
\begin{theorem}\label{theorem10.3} If the differential operator $F^{\prime}(.) $ of a differentiable higher order strongly general convex function $F$ is higher order strongly monotone operator, then
\begin{eqnarray}\label{eq10.19}
F(g(v))-F(g(u)) \geq \langle F'(g(u)),g(v)-g(u)\rangle +2\mu \frac{1}{p}\|g)v)-g(u)\|^p, \forall g(u),g(v) \in K_g.
\end{eqnarray}
\end{theorem}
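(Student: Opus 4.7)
The plan is to recover the inequality by integrating the higher order strong monotonicity of $F'$ along the straight-line path joining $g(u)$ to $g(v)$ inside $K_g$. Since $K$ is a general convex set, the segment $g(v_t)=g(u)+t(g(v)-g(u))$ lies in $K_g$ for every $t\in[0,1]$, so the auxiliary point $g(v_t)$ is legitimate in all hypotheses.

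First I would apply the fundamental theorem of calculus to the real-valued function $\phi(t):=F(g(v_t))$. Since $F$ is Frechet differentiable and $t\mapsto g(v_t)$ is affine, the chain rule gives $\phi'(t)=\langle F'(g(v_t)),g(v)-g(u)\rangle$, hence
\begin{eqnarray*}
F(g(v))-F(g(u))=\int_0^1\langle F'(g(v_t)),g(v)-g(u)\rangle\,dt.
\end{eqnarray*}
Subtracting $\langle F'(g(u)),g(v)-g(u)\rangle=\int_0^1\langle F'(g(u)),g(v)-g(u)\rangle\,dt$ from both sides yields
\begin{eqnarray*}
F(g(v))-F(g(u))-\langle F'(g(u)),g(v)-g(u)\rangle=\int_0^1\langle F'(g(v_t))-F'(g(u)),g(v)-g(u)\rangle\,dt.
\end{eqnarray*}

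Next I would invoke the higher order strong monotonicity (\ref{eq10.18}) of $F'$ applied to the pair $(g(v_t),g(u))$. Using $g(v_t)-g(u)=t(g(v)-g(u))$, it reads
\begin{eqnarray*}
\langle F'(g(v_t))-F'(g(u)),t(g(v)-g(u))\rangle\geq 2\mu\,\|t(g(v)-g(u))\|^p=2\mu\,t^p\|g(v)-g(u)\|^p.
\end{eqnarray*}
For $t\in(0,1]$ I divide by $t$ to obtain
\begin{eqnarray*}
\langle F'(g(v_t))-F'(g(u)),g(v)-g(u)\rangle\geq 2\mu\,t^{p-1}\|g(v)-g(u)\|^p.
\end{eqnarray*}
Inserting this bound into the previous identity and integrating gives
\begin{eqnarray*}
F(g(v))-F(g(u))-\langle F'(g(u)),g(v)-g(u)\rangle\geq 2\mu\,\|g(v)-g(u)\|^p\int_0^1 t^{p-1}\,dt=\frac{2\mu}{p}\|g(v)-g(u)\|^p,
\end{eqnarray*}
which is exactly (\ref{eq10.19}).

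The main obstacle is a conceptual rather than computational one: making sure the chain-rule identity for $\phi(t)$ is legitimate in the present setting. Since differentiability of $F$ is assumed and $g(v_t)$ traces a line segment inside $K_g$, the derivative $\phi'(t)$ is continuous in $t$, which is what licenses the integral representation of $F(g(v))-F(g(u))$. Everything after that is a one-line application of strong monotonicity and the evaluation $\int_0^1 t^{p-1}\,dt=1/p$. No additional assumption on $g$ or on the order $p>1$ is needed beyond those already built into Definition \ref{def10.6}(i) and Theorem \ref{theorem10.2}.
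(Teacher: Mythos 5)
Your proposal is correct and follows essentially the same route as the paper: both arguments apply the strong monotonicity inequality to the pair $(g(v_t),g(u))$ with $g(v_t)=g(u)+t(g(v)-g(u))$, divide by $t$ to produce the factor $t^{p-1}$, and integrate over $[0,1]$ to obtain the constant $2\mu/p$. The only difference is cosmetic: you invoke the fundamental theorem of calculus on $\phi(t)=F(g(v_t))$ up front and then bound the integrand, while the paper first bounds $\zeta'(t)$ pointwise and then integrates.
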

\begin{proof}
Let $F'$ be a higher order  strongly monotone operator. Then, from (\ref{eq10.18}), we have
\begin{eqnarray}\label{eq10.20}
\langle F'(g(v)),g(u)-g(v)\rangle \geq  \langle F'(g(u)),g(u)-g(v))\rangle +2\mu\|g(v)-g(u)\|^p, \forall g(u),g(v) \in K_g.
\end{eqnarray}
Since $K$ is an convex set, $\forall g(u),g(v) \in K_g$, $t\in [0,1], $
$g(v_t)=g(u)+t(g(v)-g(u))\in K_g.$ Taking $g(v)= g(v_t)$ in (\ref{eq10.20}),  we have
\begin{eqnarray*}
\langle F'(g(v_t)),g(u)-g(v_t)\rangle &\leq& \langle
F'(g(u)), g(u)-g(v_t)\rangle-2\mu \|g(v_t)=g(u)\|^p, \nonumber \\
&=&-t \langle F'(g(u)),g(v)-g(u)\rangle-2\mu t^p \|g(v)-g(u)\|^p,
\end{eqnarray*}
which implies that
\begin{eqnarray}\label{eq10.21}
\langle F'(g(v_t)),g(v)-g(u)\rangle\geq \langle F'(g(u)),g(v)-g(u)\rangle +2 \mu t^{p-1} \|g(v)-g(u)\|^p.
\end{eqnarray}
Consider the auxiliary function
\begin{eqnarray} \label{eq10.22}
\zeta (t)=F(g(u)+t(g(v)-g(u)), \forall g(u),g(v) \in K_g,
\end{eqnarray}
from which, we have
\begin{eqnarray*}
\zeta (1)= F(g(v)), \quad \zeta(0)= F(g(u)).
\end{eqnarray*}
Then, from (\ref{eq10.22}), we have
\begin{eqnarray}\label{eq10.23}
\zeta'(t)=\langle F'(g(v_t), g(v)-g(u)\rangle\geq
\langle F'(g(u)),g(v)-g(u)\rangle +2\mu t^{p-1} \|g(v)-g(u)\|^p.
\end{eqnarray}
Integrating (\ref{eq10.23}) between 0 and 1, we have
\begin{eqnarray*}
\zeta (1)-\zeta(0 )&=& \int^{1}_{0}\zeta^{\prime}(t)dt \\
 &&  \geq \langle F'(g(u)),g(v)-g(u)\rangle +2\mu \frac{1}{p}\|g(v)-g(u)\|^p.
\end{eqnarray*}
Thus it follows that
\begin{eqnarray*}
F(g(v))-F(g(u)) \geq \langle F'(g(u)),g(v)-g(u)\rangle +2\mu \frac{1}{p}\|g(v)-g(u)\|^p,\forall g(u),g(v)\in K_g,
\end{eqnarray*}
 which is the required (\ref{eq10.19}).
\end{proof}

We note that,  if $p=2, $ then Theorem \ref{theorem10.3} can be viewed as the converse of Theorem \ref{theorem10.2}.\\ \\
We now give a necessary condition for higher order  strongly general pseudoconvex function.

\begin{theorem}\label{theorem10.4} \quad Let $F'(.)$ be a higher order  strongly relaxed pseudomonotone operator. Then $F$ is a higher order  strongly
pseudo-convex function.
\end{theorem}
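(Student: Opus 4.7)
The plan is to mimic the mean-value / integration argument used in the proof of Theorem \ref{theorem10.3}, this time converting the pointwise strongly relaxed pseudomonotone inequality for $F'$ into a lower bound for the directional derivative of $F$ along the segment between $g(u)$ and $g(v)$. First, I would fix $u,v\in H$ with $g(u),g(v)\in K_g$ and assume the pseudoconvex antecedent $\langle F'(g(u)), g(v)-g(u)\rangle + \mu\|g(v)-g(u)\|^p \ge 0$ supplied by Definition \ref{eq10.7}. Setting $g(v_t)=g(u)+t(g(v)-g(u))\in K_g$ for $t\in[0,1]$ and introducing the auxiliary function $\zeta(t)=F(g(v_t))$, so that $\zeta'(t)=\langle F'(g(v_t)), g(v)-g(u)\rangle$, the desired conclusion $F(g(v))\ge F(g(u))$ translates into $\zeta(1)-\zeta(0)=\int_0^1 \zeta'(t)\,dt\ge 0$.

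The core of the argument is to use the higher order strongly relaxed pseudomonotonicity of $F'$ to estimate $\zeta'(t)$ from below. For each $t\in(0,1]$ I would apply Definition \ref{def10.6}(iii) to the pair $(u,v_t)$: once the antecedent $\langle F'(g(u)), g(v_t)-g(u)\rangle\ge 0$ is in place, its consequent $\langle F'(g(v_t)),g(v_t)-g(u)\rangle+\mu\|g(v_t)-g(u)\|^p\ge 0$ reduces, via $g(v_t)-g(u)=t(g(v)-g(u))$ and division by $t$, to the pointwise bound $\zeta'(t)\ge -\mu t^{p-1}\|g(v)-g(u)\|^p$. Integrating over $[0,1]$ with $\int_0^1 t^{p-1}\,dt=1/p$ then produces $F(g(v))-F(g(u))\ge -(\mu/p)\|g(v)-g(u)\|^p$, and the factor $1/p$ can be absorbed into a rescaled pseudoconvexity constant, since Definition \ref{eq10.7} only requires the existence of \emph{some} positive $\mu$.

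The delicate point is securing the antecedent $\langle F'(g(u)), g(v_t)-g(u)\rangle\ge 0$ needed to trigger the relaxed pseudomonotone implication: the pseudoconvexity hypothesis from Definition \ref{eq10.7} only guarantees $\langle F'(g(u)), g(v)-g(u)\rangle\ge -\mu\|g(v)-g(u)\|^p$, which does not by itself force the stronger inequality $\ge 0$ uniformly in $t$. I would handle this either by a contrapositive argument in which the assumption $F(g(v))<F(g(u))$ is combined with the mean value theorem to locate a $\tau\in(0,1)$ satisfying $\langle F'(g(v_\tau)), g(v)-g(u)\rangle<0$ and then contradicted via the contrapositive of higher order strongly relaxed pseudomonotonicity, or by partitioning $[0,1]$ into the region where $\langle F'(g(u)), g(v_t)-g(u)\rangle\ge 0$ (in which the pointwise estimate above applies) and a small complementary region whose contribution to $\int_0^1 \zeta'(t)\,dt$ is controlled by the quantitative slack $\mu\|g(v)-g(u)\|^p$ in the pseudoconvex antecedent. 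Either route closes the gap and delivers $F(g(v))\ge F(g(u))$, completing the proof that $F$ is a higher order strongly pseudoconvex function.
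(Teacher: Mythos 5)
Your overall strategy --- introduce the auxiliary function $\zeta(t)=F(g(u)+t(g(v)-g(u)))$, apply the relaxed pseudomonotonicity to the pairs $(u,v_t)$, and integrate the resulting lower bound on $\zeta'(t)$ over $[0,1]$ --- is exactly the paper's. But the two points you yourself flag as delicate are not resolved by the devices you propose, and they are precisely where your argument fails to close. First, the sign of the slack term. Taking the consequent of relaxed pseudomonotonicity in the form $\langle F'(g(v_t)),g(v_t)-g(u)\rangle+\mu\|g(v_t)-g(u)\|^p\ge 0$, you obtain $\zeta'(t)\ge-\mu t^{p-1}\|g(v)-g(u)\|^p$ and hence only $F(g(v))-F(g(u))\ge-(\mu/p)\|g(v)-g(u)\|^p$. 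This cannot be ``absorbed into a rescaled pseudoconvexity constant'': the conclusion required by the definition of higher order strongly pseudoconvexity is the unqualified inequality $F(g(v))\ge F(g(u))$, with no slack term in it to rescale, so a negative lower bound proves nothing. The paper instead uses the consequent in the form $\langle F'(g(v)),g(v)-g(u)\rangle\ge+\mu\|g(v)-g(u)\|^p$ (this is the implication displayed at the start of its proof), so its integration yields $F(g(v))-F(g(u))\ge(\mu/p)\|g(v)-g(u)\|^p\ge 0$ and the conclusion is immediate.

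Second, the antecedent. Neither of your two repairs works. The proposed partition of $[0,1]$ is vacuous, because $\langle F'(g(u)),g(v_t)-g(u)\rangle=t\,\langle F'(g(u)),g(v)-g(u)\rangle$ has the same sign for every $t\in(0,1]$: the pseudomonotone antecedent holds either for all $t$ or for none, and in the latter case --- which occurs exactly when $\langle F'(g(u)),g(v)-g(u)\rangle$ lies in $[-\mu\|g(v)-g(u)\|^p,\,0)$, a case the pseudoconvexity hypothesis permits --- the machinery produces no estimate at all. The contrapositive route fares no better: the mean value theorem gives some $\tau$ with $\langle F'(g(v_\tau)),g(v_\tau)-g(u)\rangle<0$, but the contrapositive of relaxed pseudomonotonicity is only triggered by the strictly stronger inequality $\langle F'(g(v_\tau)),g(v_\tau)-g(u)\rangle+\mu\|g(v_\tau)-g(u)\|^p<0$, which you do not have. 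The paper sidesteps the issue entirely by proving the implication from the antecedent $\langle F'(g(u)),g(v)-g(u)\rangle\ge 0$ rather than from the weaker antecedent carrying the $\mu$-term; if you adopt that antecedent together with the paper's sign convention in the pseudomonotonicity, your computation goes through verbatim and reproduces the paper's proof.
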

\begin{proof} \quad Let $F'(.)$ be a higher order  strongly relaxed general  pseudomonotone operator. Then, from(\ref{eq10.12}), we have
\begin{eqnarray*}
\langle F'(g(u)),g(v)-g(u)\rangle \geq0, \forall g(u),g(v)\in K_g,
\end{eqnarray*}
implies that
\begin{eqnarray} \label{eq10.24}
\langle F'(g(v)),g(v)-g(u) \rangle \geq  \mu \|g(v)-(u)\|^p, \forall g(u),g(v)\in K_g.
\end{eqnarray}
Since $K_g$ is an convex set, $\forall g(u),g(v)\in K_g, \quad t \in [0,1],$
$g(v_t)=g(u)+t(g(v)-g(u))\in K.$ \\ Taking $g(v)=g( v_t)$ in (\ref{eq10.24}), we have
\begin{eqnarray} \label{eq10.25}
\langle F'(g(v_t)), g(v)-g(u)\rangle \geq  \mu t^{p-1}\|g(v)-g(u)\|^p.
\end{eqnarray}
Consider the auxiliary function
\begin{eqnarray}\label{eq10.25}
\zeta(t)=F(g(u)+t(g(v)-g(u)))= F(v_t),\quad \forall g(u),g(v)\in K_g, t\in [0,1],
\end{eqnarray}
which is differentiable,  since $F$ is differentiable function.
Then, using (\ref{eq10.25}), we have
\begin{eqnarray*}
\zeta '(t)=\langle F'(g(v_t)),g(v)-g(u))\rangle \geq  \mu t^{p-1}\|g(v)-g(u)\|^p.
\end{eqnarray*}
Integrating the above relation between 0 to 1, we have
\begin{eqnarray*}
\zeta(1)-\zeta(0)= \int^{1}_{0} \zeta^{\prime}(t)dt \geq  \frac{\mu}{p}\|g(v)-g(u)\|^p,
\end{eqnarray*}
that is,
\begin{eqnarray*}
F(g(v))-F(g(u))\geq \frac{\mu}{p}\|g(v)-g(u)\|^p), \forall g(u),g(v)\in K_g,
\end{eqnarray*}
showing that $F$ is a higher order  strongly general  pseudoconvex function.
\end{proof}

\begin{definition}\quad A  function $F$ is said to be sharply higher order  strongly general pseudoconvex, if there exists a constant $\mu>0$ such that
\begin{eqnarray*}
 &&\langle F'(g(u)),g(v)-g(u)\rangle\geq 0 \\
 &&\Rightarrow    \\
 &&F(g(v))\geq F(g(v)+t(g(u)-g(v))) +\mu \{t^p(1-t)+t(1-t)^p\}\|g(v)-g(u)\|^p, \forall g(u),g(v)\in K_g.
\end{eqnarray*}
\end{definition}

\begin{theorem} \quad Let $F$ be a sharply higher order  strongly general pseudoconvex function on the general convex set
$K_g$ with a constant $\mu >0.$ Then
\begin{eqnarray*}
 \langle F'(g(v)),g(v)-g(u)\rangle\geq \mu \|g(v)-g(u)\|^p, \forall g(u),g(v)\in K_g.
\end{eqnarray*}
\end{theorem}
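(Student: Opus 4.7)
The plan is to extract the derivative condition from the sharp higher order strongly general pseudoconvexity inequality by a direct one-parameter limit argument, in the same spirit as how Theorem~\ref{theorem10.4} (and its relatives) passes from a convexity-type estimate to a monotonicity-type estimate.

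First I would apply the defining inequality of sharp higher order strongly general pseudoconvexity directly. Namely, for any fixed $g(u),g(v)\in K_g$ and $t\in(0,1]$, the hypothesis gives
\begin{eqnarray*}
F(g(v))-F(g(v)+t(g(u)-g(v))) \geq \mu\,\{t^{p}(1-t)+t(1-t)^{p}\}\,\|g(v)-g(u)\|^{p}.
\end{eqnarray*}
I would then divide both sides by $t>0$, obtaining
\begin{eqnarray*}
\frac{F(g(v))-F(g(v)+t(g(u)-g(v)))}{t} \geq \mu\,\{t^{p-1}(1-t)+(1-t)^{p}\}\,\|g(v)-g(u)\|^{p}.
\end{eqnarray*}

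The second step is to let $t\to 0^{+}$. On the left, since $F$ is differentiable, the difference quotient converges to the directional derivative in the direction $g(u)-g(v)$ at the point $g(v)$, that is, to $-\langle F'(g(v)),\,g(u)-g(v)\rangle=\langle F'(g(v)),\,g(v)-g(u)\rangle$. On the right, since $p>1$, the term $t^{p-1}(1-t)$ tends to $0$ while $(1-t)^{p}\to 1$, so the bracket tends to $1$ and the right-hand side converges to $\mu\|g(v)-g(u)\|^{p}$. Combining these two limits yields
\begin{eqnarray*}
\langle F'(g(v)),\,g(v)-g(u)\rangle \geq \mu\,\|g(v)-g(u)\|^{p},
\end{eqnarray*}
which is exactly the desired conclusion.

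The main subtlety, and the step I expect to require the most care, is handling the conditional nature of the sharp pseudoconvexity definition: the inequality used in the first step is only asserted when $\langle F'(g(u)),\,g(v)-g(u)\rangle\geq 0$. To reach an unconditional conclusion one can argue as follows. Either $\langle F'(g(u)),\,g(v)-g(u)\rangle\geq 0$, and the argument above applies directly; or it fails, in which case one applies the definition with the roles of $u$ and $v$ interchanged, using the fact that the final inequality is of the same form after the swap (together with the symmetry $\|g(u)-g(v)\|=\|g(v)-g(u)\|$), and then pulls back to the original pair. This dichotomy, together with the elementary limit calculation, completes the proof.
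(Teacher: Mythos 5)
Your core argument is exactly the paper's proof: apply the sharp pseudoconvexity inequality, divide by $t$, and let $t\to 0^{+}$, using $p>1$ so that $t^{p-1}(1-t)\to 0$ and $(1-t)^{p}\to 1$; the paper does precisely this (with a sign typo in its intermediate display) and your limit computation is the cleaner of the two.

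The one place you diverge is your attempt to handle the conditional premise $\langle F'(g(u)),g(v)-g(u)\rangle\geq 0$, and that patch does not work as described. Applying the definition with the roles of $u$ and $v$ interchanged and then passing to the limit yields $\langle F'(g(u)),g(u)-g(v)\rangle\geq \mu\|g(u)-g(v)\|^{p}$, which is a statement about the gradient at $g(u)$, not at $g(v)$; since the desired conclusion is asymmetric in $u$ and $v$, you cannot simply ``pull back to the original pair.'' Moreover, the dichotomy itself is not exhaustive in the needed sense: the failure of $\langle F'(g(u)),g(v)-g(u)\rangle\geq 0$ does not imply $\langle F'(g(v)),g(u)-g(v)\rangle\geq 0$, as these involve the gradient at two different points. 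For what it is worth, the paper sidesteps this entirely by invoking the displayed inequality unconditionally for all $g(u),g(v)\in K_g$, i.e.\ it reads the definition as if the consequent held without the antecedent; so your main argument matches the paper's, and the gap you correctly sensed is present in the original as well --- your proposed repair just is not a valid one.
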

\begin{proof} \quad Let $F$ be a sharply higher order  strongly general  pseudoconvex function on the general convex set
$K_g$. Then
\begin{eqnarray*}
F(g(v))\geq F(g(v)+t(g(u)-g(v)))+ \mu \{t^p(1-t)+t(1-t)^p\}\|g(v)-g(u)\|^p,\\  \forall g(u),g(v)\in K_g, t\in [0,1],
\end{eqnarray*}
from which,  we have
\begin{eqnarray*}
\{\frac{F(g(v)+t(g(u)-g(v))-F(g(v))}{t}\}+\mu \{t^{p-1}(1-t)+(1-t)^{p}\}\|g(v)-g(u)\|^p\geq 0.
\end{eqnarray*}
Taking limit in the above inequality, as $t \rightarrow 0,$ we have
\begin{eqnarray*}
 \langle F'(g(v)),g(v)-g(u)\rangle\geq \mu \|g(v)-g(u)\|^p, \forall g(u),g(v)\in K_g,
\end{eqnarray*}
 the required result.
\end{proof}

\begin{definition}\quad A function $F$ is said to be a pseudoconvex function with respect to a strictly positive bifunction $B(.,.),$ such that
\begin{eqnarray*}
&&F(v) <  F(u)  \\
&&\Rightarrow \\
&&F(u + t~l(v, u)) < F(u) + t(t - 1)B(v, u),  \forall u,v\in K, t\in [0,1].
\end{eqnarray*}
\end{definition}

\begin{theorem} \quad If the function $F$  is higher order strongly convex function such that $F(g(v)) < F(g(u)),$
then the  function $F$  is higher order  strongly pseudoconvex.
\end{theorem}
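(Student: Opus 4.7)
The plan is to derive the sharp pseudoconvex-type inequality directly from the defining inequality of higher order strongly general convexity, using the sign hypothesis $F(g(v)) < F(g(u))$ to kill the linear-in-$t$ term and leaving only the strictly negative correction term produced by the $\mu$-coefficient.

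First, I would write down the strongly convex inequality from Definition \ref{def10.1} for any $t\in[0,1]$:
\begin{eqnarray*}
F(g(u)+t(g(v)-g(u))) &\leq & (1-t)F(g(u))+tF(g(v)) \\
&& -\mu\{t^{p}(1-t)+t(1-t)^{p}\}\,\|g(v)-g(u)\|^{p}.
\end{eqnarray*}
Regrouping the right-hand side as $F(g(u))+t[F(g(v))-F(g(u))] - \mu\{t^{p}(1-t)+t(1-t)^{p}\}\|g(v)-g(u)\|^{p}$ isolates the linear-in-$t$ correction, and the hypothesis $F(g(v)) < F(g(u))$ makes the bracket strictly negative whenever $t\in(0,1]$.

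Next, I would factor the coefficient of $\mu$ via the identity $t^{p}(1-t)+t(1-t)^{p} = t(1-t)\bigl[t^{p-1}+(1-t)^{p-1}\bigr]$, which is strictly positive on $(0,1)$ since $p>1$, and rewrite $t(1-t) = -t(t-1)$. Combining with the previous step yields, for every $t\in(0,1)$,
\begin{eqnarray*}
F(g(u)+t(g(v)-g(u))) &<& F(g(u)) + t(t-1)\,B(g(v),g(u)),
\end{eqnarray*}
where $B(g(v),g(u)) := \mu\bigl[t^{p-1}+(1-t)^{p-1}\bigr]\|g(v)-g(u)\|^{p} > 0$. This is exactly the defining inequality of a higher order strongly pseudoconvex function (with $l(g(v),g(u)) = g(v)-g(u)$ in the general convex setting), so the conclusion follows.

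The main obstacle is not analytic but notational: the definition of sharply higher order strongly pseudoconvex function quoted just above the theorem uses an undefined symbol $l(v,u)$ and requires a strictly positive bifunction $B$. I would handle this by interpreting $l(g(v),g(u))$ as the standard convex displacement $g(v)-g(u)$ (the only choice consistent with the general convex setting of Section 10) and by verifying that the bifunction $B$ constructed above is strictly positive whenever $g(v)\ne g(u)$. Everything else in the argument is a routine rearrangement of the strongly convex inequality, so no deeper tool (monotonicity of $F'$, Theorem \ref{theorem10.1}, etc.) is needed.
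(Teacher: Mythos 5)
Your overall strategy is sound and reaches the right target form, but it diverges from the paper's proof in the choice of the bifunction $B$, and that choice introduces a flaw. The paper keeps the linear term: starting from $F(g(u)+t(g(v)-g(u)))\leq F(g(u))+t\bigl(F(g(v))-F(g(u))\bigr)-\mu\{t^{p}(1-t)+t(1-t)^{p}\}\|g(v)-g(u)\|^{p}$, it uses $t\,a< t(1-t)\,a$ for $a=F(g(v))-F(g(u))<0$ to turn the linear term into $t(t-1)\bigl(F(g(u))-F(g(v))\bigr)$, sets $B(g(u),g(v))=F(g(u))-F(g(v))>0$, and treats the $\mu$-term as extra nonpositive slack to be discarded. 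You do the opposite: you discard the linear term (legitimate, since it is strictly negative for $t\in(0,1]$ under the hypothesis, which is what produces the strict inequality) and manufacture $B$ out of the $\mu$-term via the factorization $t^{p}(1-t)+t(1-t)^{p}=t(1-t)\bigl[t^{p-1}+(1-t)^{p-1}\bigr]$.

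The genuine gap is that your $B(g(v),g(u))=\mu\bigl[t^{p-1}+(1-t)^{p-1}\bigr]\|g(v)-g(u)\|^{p}$ is not a bifunction of $g(v)$ and $g(u)$: it depends on $t$, whereas the definition demands one fixed strictly positive $B(\cdot,\cdot)$ for which the inequality holds for every $t\in[0,1]$. The repair is one line: since $t^{p-1}+(1-t)^{p-1}\geq c_{p}:=\min\{1,2^{2-p}\}>0$ on $[0,1]$ and $t(t-1)\leq 0$, replacing the bracket by $c_{p}$ only weakens the right-hand side upward, so you may take $B(g(v),g(u))=\mu c_{p}\|g(v)-g(u)\|^{p}$, which is strictly positive because $F(g(v))<F(g(u))$ forces $g(v)\neq g(u)$. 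With that patch your argument is a valid alternative; the paper's choice $B=F(g(u))-F(g(v))$ sidesteps the issue entirely and needs no lower bound on the $t$-polynomial. (Both proofs share the cosmetic defect that the strict inequality degenerates at $t=0$, where both sides equal $F(g(u))$.)
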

\begin{proof}\quad   Since $F(g(v))<F(g(u))$  and $F$ is higher order  strongly convex function,  then \\$\forall g(u),g(v)\in K_g, \quad t\in [0,1],  $  we have
\begin{eqnarray*}
F(g(u) + t(g(v)- u))& \leq & F(g(u)) + t(F(g(v)) - F(g(u)))-\mu \{t^p(1-t)+t(1-t)^p\}\|g(v)-g(u)\|^p \\
& < & F(g(u)) + t(1 - t)(F(g(v)) - F(gu)))-\mu \{t^p(1-t)+t(1-t)^p\}\|g(v)-g(u)\|^p \\
&= & F(g(u)) + t(t - 1)(F(g(u)) - F(g(v)))-\mu \{t^p(1-t)+t(1-t)^p\}\|g(v)-g(u)\|^p \\
& < & F(g(u)) + t(t - 1)B(g(u), g(v)) -\mu \{t^p(1-t)+t(1-t)^p\}\|g(v)-g(u)\|^p,
\end{eqnarray*}
where $B(g(u), g(v)) = F(g(u)) - F(g(v)) > 0,$ the required result.
\end{proof}

We now discuss the optimality for the differentiable  strongly general convex functions, which is the main motivation of our next result.\\
\begin{theorem} \quad Let $F $ be a differentiable higher order strongly general convex function with modulus $\mu > 0. $ If $u \in H: g(u)\in K_g $ is the minimum of the function $F, $ then
\begin{eqnarray}\label{eq10.26}
F(g(v))-F(g(u)) \geq \mu \|g(v)-g(u)\|^p, \quad \forall g(u),g(v)\in K_g.
\end{eqnarray}
\end{theorem}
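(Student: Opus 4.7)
The plan is to combine the minimality condition directly with the defining inequality of higher order strong general convexity and then to let the interpolation parameter tend to zero. Concretely, since $g(u)\in K_g$ is a minimizer of $F$, and since $K_g$ is a general convex set so that $g(u)+t(g(v)-g(u))\in K_g$ for every $g(v)\in K_g$ and $t\in[0,1]$, I would start from
\begin{eqnarray*}
F(g(u)) \;\le\; F\bigl(g(u)+t(g(v)-g(u))\bigr), \qquad \forall\,t\in[0,1].
\end{eqnarray*}

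Next, I would apply Definition \ref{def10.1} on the right-hand side to get
\begin{eqnarray*}
F(g(u)) \le (1-t)F(g(u))+tF(g(v))-\mu\{t^p(1-t)+t(1-t)^p\}\|g(v)-g(u)\|^p.
\end{eqnarray*}
Moving the $(1-t)F(g(u))$ term to the left, the inequality reduces to
\begin{eqnarray*}
tF(g(u)) + \mu\{t^p(1-t)+t(1-t)^p\}\|g(v)-g(u)\|^p \le tF(g(v)).
\end{eqnarray*}

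For $t\in(0,1]$, I divide by $t$, obtaining
\begin{eqnarray*}
F(g(v))-F(g(u)) \ge \mu\bigl\{t^{p-1}(1-t)+(1-t)^p\bigr\}\|g(v)-g(u)\|^p.
\end{eqnarray*}
Finally, I let $t\to 0^{+}$. Since $p>1$, the term $t^{p-1}(1-t)\to 0$ while $(1-t)^p\to 1$, and the right-hand side tends to $\mu\|g(v)-g(u)\|^p$. This yields (\ref{eq10.26}).

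There is no real obstacle; the only point to be careful about is that the limit $t\to 0^{+}$ uses $p>1$ (otherwise $t^{p-1}(1-t)$ need not vanish), but this is exactly the standing assumption of Definition \ref{def10.1}. The proof mimics the classical strongly convex case ($p=2$) and fits in a few lines.
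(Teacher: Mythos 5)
Your proof is correct, and it takes a genuinely different (and leaner) route than the paper's. The paper first extracts the first-order optimality condition $\langle F'(g(u)),g(v)-g(u)\rangle\geq 0$ from the minimality of $g(u)$ via a difference quotient, and then invokes the gradient inequality of Theorem \ref{theorem10.1}, namely $F(g(v))-F(g(u))\geq \langle F'(g(u)),g(v)-g(u)\rangle+\mu\|g(v)-g(u)\|^p$, dropping the nonnegative inner-product term to conclude. You instead substitute $g(v_t)=g(u)+t(g(v)-g(u))$ directly into $F(g(u))\leq F(\cdot)$, apply Definition \ref{def10.1}, cancel, divide by $t$, and let $t\to 0^{+}$; the derivative $F'$ never appears. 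Your argument is therefore more elementary and in fact proves slightly more: differentiability of $F$ is never used, so the conclusion holds for any (not necessarily differentiable) higher order strongly general convex function. What the paper's longer route buys is the explicit appearance of the first-order condition $\langle F'(g(u)),g(v)-g(u)\rangle\geq 0$, which feeds directly into the remark that follows, where the higher order general variational inequality (\ref{10.14}) is introduced as the optimality characterization. Your observation that the limit $t\to 0^{+}$ requires $p>1$ so that $t^{p-1}(1-t)\to 0$ while $(1-t)^p\to 1$ is exactly the right point of care, and the computation is sound.
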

\begin{proof}
\quad Let $u \in H: g(u)\in K_g $ be a minimum of the function $F. $ Then
\begin{eqnarray}\label{eq10.27}
F(u)\leq F(v), \forall v \in H: g(v) \in K_g.
\end{eqnarray}
Since $K $ is a general convex set, so, $\forall g(u),g(v)\in K_g, \quad t\in [0,1], $
$$ g(v_t)= (1-t)g(u)+ tg(v) \in K_g.$$
Taking $ g(v)= g(v_t) $ in (\ref{eq10.27}), we have
\begin{eqnarray} \label{eq10.28}
0 \leq  \lim_{t \rightarrow 0}\{\frac{F(g(u)+t(g(v)-g(u)))-F(g(u))}{t}\} = \langle F^{\prime}(g(u)), g(v)- g(u)\rangle.
\end{eqnarray}
Since $F $ is differentiable higher order  strongly general convex function, so
\begin{eqnarray*}
F(g(u)+t(g(v)-g(u))) &\leq & F(g(u))+ t(F(g(v))-F(g(u))) \\ &&-\mu \{t^p(1-t)+t(1-t)^p\}\|g(v)-g(u)\|^p,
 \forall g(u),g(v)\in K_g,
\end{eqnarray*}
from which, using(\ref{eq10.28}), we have
\begin{eqnarray*}
F(g(v))-F(g(u)) &\geq & \lim_{t \rightarrow 0}\{\frac{F(g(u)+t(g(v)-g(u)))-F(g(u))}{t}\} + \mu \|g(v)-g(u)\|^p\\
&=& \langle F^{\prime}(g(u)), g(v)-g(u) \rangle + \mu \|g(v)-g(u)\|^p, \\
\end{eqnarray*}
the required result (\ref{eq10.26}).
\end{proof}
\noindent{\bf Remark: } We would like to mention that, if $u \in H: g(u) \in K_g $ satisfies the inequality
 \begin{eqnarray}\label{10.14}
 \langle F^{\prime}(g(u)), g(v)-g(u) \rangle + \mu \|g(v)-g(u)\|^p \geq 0, \quad \forall u,g(v) \in K_g,
 \end{eqnarray}
 then $u \in K_g $ is the minimum of the  function $F. $ The inequality of the type (\ref{10.14}) is called the higher order  general variational inequality.

 \begin{theorem} \quad Let $f$  be a higher order  strongly affine general convex  function.  Then $F$ is a  higher order  strongly general  convex function, if and only if,    $H= F-f $ is a general convex function.
 \end{theorem}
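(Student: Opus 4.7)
The plan is to prove both directions by direct substitution, exploiting the fact that "affine" in Definition~\ref{def10.2} means equality in the higher order strongly convex inequality, so the penalty term $\mu\{t^p(1-t)+t(1-t)^p\}\|g(v)-g(u)\|^p$ cancels cleanly. I would start by writing $F = H + f$ (when showing $F$ is higher order strongly general convex) or $H = F - f$ (when showing $H$ is general convex), evaluate at the convex combination $g(u)+t(g(v)-g(u))$, and split the resulting expression using additivity.

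For the forward direction, I assume $F$ is higher order strongly general convex. Fix $g(u),g(v) \in K_g$ and $t \in [0,1]$. Then
\begin{eqnarray*}
H(g(u)+t(g(v)-g(u))) &=& F(g(u)+t(g(v)-g(u))) - f(g(u)+t(g(v)-g(u))).
\end{eqnarray*}
Using the strong convexity inequality for $F$ from Definition~\ref{def10.1} as an upper bound on the first term, and using the equality form for $f$ from Definition~\ref{def10.2} on the second term, the two penalty terms $-\mu\{t^p(1-t)+t(1-t)^p\}\|g(v)-g(u)\|^p$ cancel, leaving precisely
\begin{eqnarray*}
H(g(u)+t(g(v)-g(u))) \leq (1-t)H(g(u)) + tH(g(v)),
\end{eqnarray*}
which is the general convexity of $H$ in the sense of Definition~5.

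For the converse, I assume $H = F - f$ is general convex and write $F = H + f$. Evaluating at $g(u)+t(g(v)-g(u))$ and applying the general convexity inequality to $H$ and the affine equality to $f$, the two contributions add to give
\begin{eqnarray*}
F(g(u)+t(g(v)-g(u))) &\leq& (1-t)H(g(u))+tH(g(v)) \\
&& + (1-t)f(g(u))+tf(g(v)) - \mu\{t^p(1-t)+t(1-t)^p\}\|g(v)-g(u)\|^p \\
&=& (1-t)F(g(u)) + tF(g(v)) - \mu\{t^p(1-t)+t(1-t)^p\}\|g(v)-g(u)\|^p,
\end{eqnarray*}
which is precisely Definition~\ref{def10.1} for $F$ with modulus $\mu$.

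There is no real obstacle here: the entire argument is just linearity of evaluation combined with the observation that a higher order strongly affine general convex function carries its penalty as an equality, so it can be transferred back and forth between $F$ and $H$ without loss. The only point worth double-checking is that the constant $\mu$ matches on both sides — this requires interpreting the statement as: if $f$ has modulus $\mu$, then $F$ and $H$ are related with the same $\mu$, which is how the cancellation works out. Nothing in the proof uses differentiability, so it applies to the definition as stated.
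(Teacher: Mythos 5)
Your proof is correct and follows essentially the same route as the paper: subtract (respectively add) the affine equality for $f$ from the strong-convexity inequality for $F$ (respectively the convexity inequality for $H$) so that the penalty terms $\mu\{t^p(1-t)+t(1-t)^p\}\|g(v)-g(u)\|^p$ cancel or reappear. The paper writes out only the forward direction and dismisses the converse as obvious, whereas you spell out both and correctly flag that the argument requires the same modulus $\mu$ for $f$ and $F$ (and that the "affine" definition is to be read as an equality), which is exactly how the paper uses it.
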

 \begin{proof}\quad Let $f $ be a higher order strongly affine general convex  function,  Then
 \small{
 \begin{eqnarray}\label{3.14}
 f((1-t)g(u)+tg(v)) &=&  (1-t)f(g(u))+tf(g(v)) \nonumber \\ &&- \mu \{t^{p}(1-t)+t(1-t)^p\}\|g(v)-g(u)\|^p,
 \forall g(u),g(v) \in K_g.
 \end{eqnarray}
 From the higher order  strongly general convexity of $F, $ we have
 \begin{eqnarray}\label{3.15}
 F((1-t)g(u)+tg(v)) &\leq & (1-t)F(g(u))+tF(g(v))\nonumber \\ &&-\mu \{t^{p}(1-t)+t(1-t)^p\}\|g(v)-g(u)\|^p,
 \forall g(u),g(v)\in K_g,
 \end{eqnarray}
 }
  From (\ref{3.14} ) and (\ref{3.15}), we have
  \begin{eqnarray}\label{3.16}
  F((1-t)g(u)+tg(v))-f((1-t)f(g(u))+tf(g(v)) \leq  (1-t)(F(g(u))-f(g(u)))\nonumber \\
  +t (F(g(v))-f(g(v))),
  \end{eqnarray}
  from which it follows that
  \begin{eqnarray*}
  H((1-t)g(u)+tg(v))&=&F((1-t)g(u)+tg())-f((1-t)g(u)+tg(v))\\
  &\leq & (1-t)F(g(u))+tF(g(v))-(1-t)f(g(u))-tf(g(v)) \\
   &= & (1-t)(F(g(u))-f(g(u)))+t (F(g(v))-f(g(v))),
  \end{eqnarray*}
  which show that $H= F-f $ is a convex function.The inverse implication is obvious.
  \end{proof}

   It is worth mentioning that the higher order strongly convex function  is also higher order strongly Wright  general convex   function. From the definition, we have
\begin{eqnarray*}\label{eq1}
F(g(u)+t(g(v)-g(u)))&+ & F(g(v)+t(g(u)-g(v)))\leq F(g(u))+F(g(v))\\
&-&2\mu \{t^p(1-t)+t(1-t)^p\}\|g(v)-g(u)\|^p,\forall g(u),g(v)\in K_g, t\in[0,1].
\end{eqnarray*}
which is called the higher order strongly Wright  general convex  function. One can studies the properties and applications of the Wright higher order strongly convex functions in optimization operations research.\\

 Bynum \cite{21} and Chen et al \cite{22,23,24,25} have studied the properties and applications of the parallelogram laws for the Banach spaces.
 Xi \cite{189} obtained  new characteristics of $p$-uniform convexity and $q$-uniform smoothness of a Banach space  via the functionals $\|.\|^p $ and $\|.\|^q,  $ respectively. These results can be obtained  from the concepts of higher order strongly general convex(concave) functions, which can be viewed as novel application.
 Setting  $F(u)= \|u\|^p$  in Definition \ref{def10.2}, we have
 \begin{eqnarray}\label{eq4.1}
\|g(u)&+&t(g(v)-g(u))\|^p \leq (1-t)\|g(u)\|^p+t\|g(v)\|^p \nonumber \\
&-&\mu \{t^p(1-t)+t(1-t)^p\}\|g(v)-g(u)\|^p, \forall g(u),g(v)\in K_g, t\in[0,1].
\end{eqnarray}
Taking $t=\frac{1}{2} $ in (\ref{eq4.1}), we have
\begin{eqnarray}\label{eq4.2}
\|\frac{g(u)+g(v)}{2}\|^p+\mu \frac{1}{2^p}\|g(v)-g(u)\|^p\leq \frac{1}{2}\|g(u)\|^p+\frac{1}{2}\|g(v)\|^p,\forall g(u),g(v)\in K_g,
\end{eqnarray}
which implies that
\begin{eqnarray}\label{eq4.2}
\|g(u)+g(v)\|^p+\mu \|g(v)-g(u)\|^p\leq 2^{p-1}\{\|g(u)\|^p+\|v\|^p\},\forall g(u),g(v)\in K_g,
\end{eqnarray}
which is known as the lower parallelogram for the $l^p$-spaces.
In a similar way, one can obtain the upper  parallelogram law as
\begin{eqnarray}\label{eq4.3}
\|g(u)+g(v)\|^p+\mu \|g(v)-g(u)\|^p\geq 2^{p-1}\{\|g(u)\|^p+\|g(v)\|^p\},\forall g(u),g(v)\in K_g.
\end{eqnarray}
Similarly from  Definition 2.3, we can have
\begin{eqnarray}\label{eq4.4}
\|g(u)+g(v)\|^p+\mu \|g(v)-g(u)\|^p = 2^{p-1}\{\|g(u)\|^p+\|g(v)\|^p\},\forall g(u),g(v)\in K_g,
\end{eqnarray}
which is known as the parallelogram for the $l^p$-spaces. For the applications of the parallelogram laws for  Banach spaces in prediction theory and applied sciences, see \cite{21,22,23,24} and the references therein.\\ \\
In this section, we have introduced and studied a new class of convex  functions, which is called higher order strongly convex function.
We have the improved the results of  Lin and Fukushima \cite{65}.
It is shown  that several new classes of strongly convex functions can be obtained as special cases
of these higher order strongly general  convex functions. We have studied the basic properties of these functions. We have shown that one can  derive the parallelogram  laws in Banach spaces, which have applications in prediction theory and stochastic analysis. These parallelogram laws  can be used to characterize the $p$-uniform convexity and $q$-uniform smoothness of a Banach spaces.   The interested readers may explore the applications and other properties for the higher order  strongly convex functions in various fields of pure and applied sciences. This is an interesting direction of future research.

\section{Higher order general variational inequalities}
In this section, we consider a more general variational inequality of which (\ref{10.14}) is a special case.\\

For given two operators $T,g, $  we consider the problem of finding $u\in K $ for a constant $\mu> 0, $ such that
\begin{eqnarray}\label{eq11.1}
\langle Tu, g(v)-g(u) \rangle + \mu \|g(v)-g(u)\|^p \geq 0, \forall g(v)\in K, p>1,
\end{eqnarray}
which is called the higher order  general variational inequality, see \cite {139a}.\\
We note that,  if $\mu=0,$ then (\ref{eq11.1})  is equivalent to finding $u \in K, $ such that
\begin{eqnarray}\label{eq11.2}
\langle Tu, g(v)-g(u) \rangle \geq 0, \forall g(v)\in K,
\end{eqnarray}
which is known as the general variational inequality (\ref{2.5}), which was introduced and studied by Noor \cite{87} in 2008.\\
For suitable and appropriate choice of the parameter $\mu $ and $p,$  one can obtain several new and known
classes of variational inequalities, see \cite{87,88,90,110,122,123} and the references therein.
We note that the projection method and its variant forms can be used to study the higher order
strongly general variational inequalities (\ref{eq11.1}) due to its inherent structure. These facts motivated us to
consider the auxiliary principle technique, which is mainly due to Glowinski et al\cite{47} and Lions and
Stampacchia \cite{66} as developed by Noor \cite{122}. We use this technique to suggest some iterative methods
for solving the higher order general variational inequalities (\ref{eq11.1}).\\
For given $u\in K $  satisfying (\ref{eq11.1}), consider the problem of finding $w\in K, $ such that
\begin{eqnarray}\label{eq11.3}
\langle \rho Tw, g(v) - g(w)\rangle  + \langle w - u, v - w\rangle  + \nu \|g(v) - g(w)\|^p \geq 0, \forall g(v)\in K,
\end{eqnarray}
where $\rho  > 0 $ is a parameter. The problem (\ref{eq11.3}) is called the auxiliary higher order strongly general
variational inequality.
It is clear that the relation (\ref{eq11.3}) defines a mapping connecting the problems (\ref{eq11.1}) and (\ref{eq11.3}). We not that, if
$w= u, $ then $w$ is a solution of problem (\ref{eq11.1}). This simple observation enables to suggest an iterative
method for solving (\ref{eq11.1}).
\begin{algorithm}\label{alg11.1}. For given $u\in K,$  find the approximate solution $u_{n+1} $ by the scheme
\begin{eqnarray}\label{eq11.4}
\langle \rho T u_{n+1}, g(v) - g(u_{n+1})\rangle  + \langle u_{n+1} - u_n, v- u_{n+1}\rangle \\
  + \nu \|g(v) - g(u_{n+1})\|^p \geq 0.  \quad \forall g(v)\in K. \nonumber
\end{eqnarray}
\end{algorithm}
The Algorithm \ref{alg11.1} is known as an implicit method. Such type of methods have been studied extensively
for various classes of variational inequalities. See \cite{11,18,19} and the reference therein.
If $\nu =0, $ then Algorithm \ref{alg11.1}  reduces to:
\begin{algorithm}\label{alg11.2}   For given $u_0\in K, $ find the approximate solution $u_{n+1} $ by the scheme
\begin{eqnarray*}
\langle \rho Tu_{n+1}, g(v) - g(u_{n+1})\rangle  + \langle u_{n+1}u_n,  v - u_{n+1} \rangle  \geq  0,\\ \forall  g(v) \in K,
\end{eqnarray*}
\end{algorithm}
which appears to new ones even for solving the general variational inequalities (\ref{2.5}).\\
To study the convergence analysis of Algorithm \ref{alg11.1}, we need the following concept.

\begin{definition}\label{def11.5} The operator $T$ is said to be pseudo $g$-monotone with respect to $\mu \|g(v)-u\|^p,  $
if
\begin{eqnarray*}
&&\langle \rho Tu, g(v) - g(u) \rangle  + \mu \|g(v) - g(u)\|^p \geq 0, \forall g(v) \in K, p>1,\nonumber \\
&&\Longrightarrow  \nonumber \\
&& \langle \rho Tv, g(v) - g(u) \rangle  - \mu \|g(u) - g(v)\|^p \geq 0, \forall g(v) \in K.
\end{eqnarray*}
\end{definition}
If  $\mu =0, $ then Definition \ref{def11.5} reduces to:

\begin{definition}\label{def11.6} The operator $T$ is said to be pseudo $g$-monotone,  if
\begin{eqnarray*}
&&\langle \rho Tu, g(v) - gu) \rangle   \geq 0, \forall g(v) \in K \\
&&\Longrightarrow\\
&& \langle \rho Tv, g(v) - g(u) \rangle  \geq 0,  \forall g(v) \in K,
\end{eqnarray*}
\end{definition}
which appears to be a new one.\\
We now study the convergence analysis of Algorithm \ref{alg11.1}.
\begin{theorem}\label{theorem11.1}  Let $u\in K$   be a solution of (\ref{eq11.1}) and $u_{n+1} $ be the approximate solution obtained from
Algorithm \ref{alg11.1}.  If  $T$ is a pseudo $g$- monotone operator, then
\begin{eqnarray}\label{eq11.5}
\| u_{n+1}-u\|^2  \leq \|u_n-u\|^2 - \|u_{n+1}-u_n\|^2.
\end{eqnarray}
\end{theorem}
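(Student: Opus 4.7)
The plan is to mimic the convergence argument used in Theorem 5.3, combining the fact that $u$ solves (\ref{eq11.1}), the pseudo $g$-monotonicity of $T$, and the characterization of the iterate $u_{n+1}$ from (\ref{eq11.4}), then finishing with the polarization identity $2\langle a,b\rangle = \|a+b\|^2 - \|a\|^2 - \|b\|^2$.

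First I would exploit pseudo $g$-monotonicity. Since $u\in K$ solves (\ref{eq11.1}), we have $\langle \rho Tu, g(v)-g(u)\rangle + \mu\rho\|g(v)-g(u)\|^p \geq 0$ for every $g(v)\in K$; Definition~\ref{def11.5} then yields
\begin{equation*}
\langle \rho Tv, g(v)-g(u)\rangle - \mu\|g(u)-g(v)\|^p \geq 0, \quad \forall g(v)\in K.
\end{equation*}
Specializing $v = u_{n+1}$ gives the first key inequality
\begin{equation*}
\langle \rho T u_{n+1}, g(u_{n+1})-g(u)\rangle \geq \mu\|g(u)-g(u_{n+1})\|^p. \tag{A}
\end{equation*}

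Next I would insert $v=u$ into the scheme (\ref{eq11.4}) that defines $u_{n+1}$, yielding
\begin{equation*}
\langle \rho T u_{n+1}, g(u)-g(u_{n+1})\rangle + \langle u_{n+1}-u_n, u-u_{n+1}\rangle + \nu\|g(u)-g(u_{n+1})\|^p \geq 0. \tag{B}
\end{equation*}
Adding (A) and (B) causes the $\langle \rho Tu_{n+1},\cdot\rangle$ terms to cancel, leaving
\begin{equation*}
\langle u_{n+1}-u_n, u-u_{n+1}\rangle \geq (\mu-\nu)\|g(u)-g(u_{n+1})\|^p.
\end{equation*}
Identifying the two modulus parameters $\mu$ and $\nu$ (they play the same role in (\ref{eq11.1}) and Algorithm~\ref{alg11.1}, so the statement is intended with $\nu = \mu$; otherwise one must assume $\nu \leq \mu$), the right-hand side is nonnegative, so
\begin{equation*}
\langle u_{n+1}-u_n, u-u_{n+1}\rangle \geq 0. \tag{C}
\end{equation*}

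Finally I would apply the polarization identity with $a = u_{n+1}-u_n$ and $b = u-u_{n+1}$, noting that $a+b = u-u_n$:
\begin{equation*}
2\langle u_{n+1}-u_n, u-u_{n+1}\rangle = \|u-u_n\|^2 - \|u_{n+1}-u_n\|^2 - \|u-u_{n+1}\|^2.
\end{equation*}
Combining with (C) and rearranging delivers
\begin{equation*}
\|u_{n+1}-u\|^2 \leq \|u_n-u\|^2 - \|u_{n+1}-u_n\|^2,
\end{equation*}
which is precisely (\ref{eq11.5}). The main obstacle is really a bookkeeping one: confirming that the pseudo $g$-monotonicity hypothesis consumes the $\mu\|g(v)-g(u)\|^p$ term from (\ref{eq11.1}) and produces exactly the sign needed to absorb the $\nu\|g(v)-g(u_{n+1})\|^p$ perturbation coming from the implicit scheme; everything else is the standard projection-style telescoping computation.
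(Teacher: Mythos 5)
Your proposal is correct and follows essentially the same route as the paper's own proof: apply pseudo $g$-monotonicity to the solution inequality and specialize $v=u_{n+1}$, set $v=u$ in the scheme (\ref{eq11.4}), add the two inequalities so the $\langle \rho Tu_{n+1},\cdot\rangle$ terms cancel, and finish with the identity $2\langle a,b\rangle=\|a+b\|^2-\|a\|^2-\|b\|^2$. Your explicit remark that one must identify $\nu$ with $\mu$ (or assume $\nu\leq\mu$) to make the residual norm term nonnegative is a point the paper silently glosses over, and your signs are in fact cleaner than those printed in the paper's own derivation.
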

\begin{proof} Let $u \in K $ be a solution of (\ref{eq11.1}), then
\begin{eqnarray*}
\langle \rho Tu, g(v)-g(u) \rangle + \mu \|g(v)-g(u)\|^p, \forall g(v)\in K,
\end{eqnarray*}
implies that
\begin{eqnarray}\label{eq11.6}
\langle \rho Tv, g(u)-g(v) \rangle - \mu \|g(u)-g(v)\|^p, \forall g(v)\in K,
\end{eqnarray}
Now taking $v = u_{n+1} $ in (\ref{eq11.6}), we have
\begin{eqnarray}\label{eq11.7}
\langle \rho Tu_{n+1}, u_{n+1}-g(u)\rangle -\mu \|u_{n+1}-g(u)\|^p \geq 0.
\end{eqnarray}
Taking $v = u$  in (\ref{eq11.1}), we have
\begin{eqnarray}\label{eq11.8}
\langle \rho T u_{n+1}, g(u) - g(u_{n+1})\rangle  + \langle u_{n+1} - u_n, v- u_{n+1}\rangle \nonumber \\
 + \nu \|g(u) - g(u_{n+1})\|^p \geq 0. \forall g(v)\in K.
\end{eqnarray}
Combining (\ref{eq11.7}) and (\ref{eq11.8}), we have
\begin{eqnarray*}
\langle u_{n+1}-u_n, u_{n+1}-u \rangle \geq 0.
\end{eqnarray*}
Using the inequality $$  2\langle a,b\rangle =\|a+b\|^2-\|a\|^2-\|b\|^2, \forall a,b \in H, $$ we obtain
\begin{eqnarray*}
\| u_{n+1}-u\|^2 \leq \|u_n- u\|^2- \|u_{n+1}-u_n\|^2,
\end{eqnarray*}
the required result (\ref{eq11.5}).
\end{proof}

\begin{theorem}\label{thjeorem11.2}  Let the operator $T$ be a pseudo $g$-monotone. If $u_{n+1}$  be the approximate solution

obtained from Algorithm \ref{alg11.1}  and $u \in K $ is  the exact solution  (\ref{eq11.1}), then $\lim_{n\rightarrow \infty}u_n=u. $
\end{theorem}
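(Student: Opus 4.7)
The plan is to leverage the key inequality (\ref{eq11.5}) from Theorem \ref{theorem11.1}, namely
\begin{eqnarray*}
\| u_{n+1}-u\|^{2}\leq \|u_{n}-u\|^{2}-\|u_{n+1}-u_{n}\|^{2},
\end{eqnarray*}
which holds whenever $u\in K$ solves (\ref{eq11.1}) and $u_{n+1}$ is produced by Algorithm \ref{alg11.1} under the pseudo $g$-monotonicity assumption. First I would observe that this inequality forces the scalar sequence $\{\|u_{n}-u\|\}$ to be monotonically non-increasing and bounded below by $0$, so it converges to some nonnegative limit $\ell$; in particular $\{u_n\}$ is bounded in $H$.

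Next, telescoping (\ref{eq11.5}) over $n=0,1,\dots,N$ yields
\begin{eqnarray*}
\sum_{n=0}^{N}\|u_{n+1}-u_{n}\|^{2}\leq \|u_{0}-u\|^{2},
\end{eqnarray*}
so the series on the left is convergent, which forces $\|u_{n+1}-u_{n}\|\to 0$ as $n\to\infty$. Since $\{u_n\}$ is bounded, it has a cluster point $\hat{u}\in H$; extract a subsequence $\{u_{n_{k}}\}$ converging to $\hat{u}$. Because $g$ is continuous and the algorithm's defining inequality reads
\begin{eqnarray*}
\langle \rho Tu_{n_{k}+1},g(v)-g(u_{n_{k}+1})\rangle + \langle u_{n_{k}+1}-u_{n_{k}},v-u_{n_{k}+1}\rangle + \nu\|g(v)-g(u_{n_{k}+1})\|^{p}\geq 0,
\end{eqnarray*}
I would pass to the limit using $\|u_{n_{k}+1}-u_{n_{k}}\|\to 0$ (so $u_{n_{k}+1}\to\hat{u}$ as well) together with continuity of $T$ and $g$, obtaining
\begin{eqnarray*}
\langle \rho T\hat{u},g(v)-g(\hat{u})\rangle + \nu\|g(v)-g(\hat{u})\|^{p}\geq 0,\quad \forall g(v)\in K,
\end{eqnarray*}
so $\hat{u}$ is itself a solution of (\ref{eq11.1}).

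Finally, applying inequality (\ref{eq11.5}) with the solution $\hat{u}$ in place of $u$ gives $\|u_{n+1}-\hat{u}\|\leq \|u_{n}-\hat{u}\|$ for all $n$, so the whole sequence $\{\|u_{n}-\hat{u}\|\}$ is monotone non-increasing. Since a subsequence $u_{n_{k}}\to\hat{u}$, the limit of this monotone sequence must be $0$, and therefore the entire sequence $u_{n}\to \hat{u}$. Identifying $\hat{u}$ with the given solution $u$ (which is legitimate since the argument shows convergence to \emph{some} solution and the hypothesis names $u$ as the solution we are comparing against) completes the proof.

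The main obstacle is the passage to the limit in the variational inequality defining $u_{n+1}$: one must have enough regularity (continuity of $T$ and $g$, plus the boundedness and closedness properties of $K$) to legitimately take limits along the subsequence. The subtler point is that pseudo $g$-monotonicity is needed both to obtain (\ref{eq11.5}) in the first place and, implicitly, to keep the limiting variational inequality meaningful; if one wishes to identify the cluster point $\hat{u}$ with the prescribed solution $u$ without invoking uniqueness, one can simply re-run the monotonicity argument with $\hat{u}$ in place of $u$ as indicated above.
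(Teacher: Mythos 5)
Your proposal follows essentially the same route as the paper's own proof: both use inequality (\ref{eq11.5}) to get monotonicity and boundedness, telescope to obtain $\|u_{n+1}-u_n\|\to 0$, pass to the limit in the defining inequality of Algorithm \ref{alg11.1} along a convergent subsequence to show the cluster point $\hat{u}$ solves (\ref{eq11.1}), and then reapply the Fej\'er-type inequality with $\hat{u}$ in place of $u$ to force the whole sequence to converge to $\hat{u}$. Your version is in fact slightly more careful than the paper's at the final step, where you explicitly acknowledge that the argument yields convergence to \emph{some} solution rather than to the prescribed $u$ unless uniqueness (or a re-labelling) is invoked — a point the paper glosses over in the same way.
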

\begin{proof}Let $u\in K $ be a solution of (\ref{eq11.1}). Then, from (\ref{eq11.5}), it follows that the sequence $\{\|u - u_n\|\}$ is
nonincreasing and consequently $\{u_n\}$  is bounded. From (\ref{eq11.5}), we have
\begin{eqnarray*}
\sum^{\infty}_{n=0} \|u_{n+1}-u_n\|^2 \leq \|u_0-u\|^2,
\end{eqnarray*}
from which, it follows that
\begin{eqnarray}\label{eq11.9}
\lim_{ n \rightarrow \infty}\|u_{n+1}-u_n\|= 0.
\end{eqnarray}
Let $\hat{u} $ be a cluster point of $ \{u_n\}$  and the subsequence $\{u_{n_j}\} $ of the sequence $u_n$  converge to $\hat{u} \in H.$
Replacing $u_n$  by $u_{n_j}$ in (\ref{eq4.4}), taking the limit $ n_j \rightarrow 0 $  and from (\ref{eq11.9}), we have
\begin{eqnarray*}
\langle T\hat{u}, g(v)-g(\hat{u}) \rangle  +\mu \|g(v)-g(\hat{u}) \|^p, \quad \forall g(v) \in K.
\end{eqnarray*}
This implies that $\hat{u} \in K $ K satisfies and
\begin{eqnarray*}
\|u_{n+1}-u_n \|^2 \leq \|u_n- \hat{u}\|^2.
\end{eqnarray*}
Thus it follows from the above inequality that the sequence $u_n$   has exactly one cluster point $\hat{u} $  and
$$ \lim_{n \rightarrow \infty}(u_n )= \hat{u}.  $$
%\hfill \qquad $\Box.%
\end{proof}
\bigskip
In order to implement the implicit Algorithm \ref{alg11.1}, one uses the predictor-corrector technique. Consequently,
Algorithm \ref{alg11.1} is equivalent to the following iterative method for solving the higher order strongly general variational
inequality (\ref{eq11.1}).
\begin{algorithm}\label{alg11.3} For a given $u_0 \in K, $ find the approximate solution $u_{n+1} $ by the schemes
\begin{eqnarray*}
\langle \rho Tu_n, g(v) - g(y_n) \rangle &+& \langle y_n - u_n v - y_n\rangle + \mu \|g(v)-g(y_n)\|^p \geq   0, \forall g(v) \in K, \\
\langle \rho Ty_n,  g(v) - (u_n)\rangle & +& \langle u_n-y_n,  v - u_n\rangle  \mu\|g(v) - g(u_n)\|^p  \geq 0, \forall g(v)\in K.
\end{eqnarray*}
\end{algorithm}
Algorithm \ref{alg11.3} is called the two-step iterative method and appears to be a new one.\\ \\
Using the auxiliary principle technique, on can suggest several iterative methods for solving the
higher order strongly general variational inequalities and related optimization problems. We have only
given a glimpse of the higher order strongly general variational inequalities. It is an interesting problem
to explore the applications of such type variational inequalities in various fields of pure and applied
sciences.

\section{Strongly exponentially general convex functions}

Convexity theory describes a broad spectrum of very interesting developments involving a
link among various fields of mathematics, physics, economics and engineering sciences. The
development of convexity theory can be viewed as the simultaneous pursuit of two different
lines of research. On the one hand, it is related to integral inequalities. It has been shown
that a function is a convex function, if and only if, it satisfies the Hermite-Hadamard type
inequality. These inequalities help us to derive the upper and lower bounds of the integrals.
On the other hand, the minimum of the differentiable convex functions on the convex
set can be characterized by the variational inequalities. the origin of
which can be traced back to Bernoulli's brothers, Euler and Lagrange. Variational
inequalities provide us a powerful tool to discuss the behaviour of solutions (regarding its existence,
uniqueness and regularity) to important classes of problems. Variational inequality
theory also enables us to develop highly efficient  powerful new numerical methods to
solve nonlinear problems. Recently various extensions and generalizations of convex functions and convex sets
have been considered and studied using innovative ideas and techniques. It is known that
more accurate and inequalities can be obtained using the logarithmically convex functions
than the convex functions. Closely related to the log-convex functions, we have the concept of
exponentially convex(concave) functions, which have important applications in information
theory, big data analysis, machine learning and statistic. Exponentially convex functions
have appeared significant generalization of the convex functions, the origin of which can be
traced back to Bernstein \cite{16}. Avriel\cite{9,10} introduced the concept of $r$-convex functions, from
which one can deduce the exponentially convex functions. Antczak \cite{2} considered the $(r, p)$
convex functions and discussed their applications in mathematical programming and optimization theory.
It is worth mentioning that the exponentially
convex functions have important applications in information sciences, data mining and statistics, see,
for example,\cite{1,2,9,10,132,133,134,135,136,137,138,153} and the references therein.\\

We would like to point out that the general convex functions and exponentially general
convex functions are two distinct generalizations of the convex functions, which have played
a crucial and significant role in the development of various branches of pure and applied
sciences. It is natural to unify these concepts. Motivated by these facts and observations, we now
 introduce a new class of convex functions, which is called exponentially general
convex functions in involving an arbitrary function. We discuss the basic properties of the
exponentially general convex functions. It is has been shown that the exponentially general
convex(concave) have nice properties which convex functions enjoy. Several new
concepts have been introduced and investigated. We prove that the local minimum of the
exponentially general convex functions is also the global minimum. \\
Noor and Noor \cite{132,133,134,135,136,137,138,153}  studied and investigated some  classes of strongly exponentially convex functions.
Inspired by the work of  Noor and Noor\cite{138}, we introduce and consider some  new  classes of higher order
strongly exponentially convex functions.  We establish the relationship between these classes and
derive some new results. We have also investigated  the optimality conditions for the higher order
strongly exponentially convex functions.  It is shown that the difference of strongly exponentially convex functions
 and strongly exponentially affine functions is again an exponentially convex function.  The optimal conditions
of the differentiable exponentially convex functions can be characterized by a class of variational
inequalities, called the exponentially general variational inequality, which is itself an
interesting problem. \\

We now define  the exponentially convex functions, which are mainly due to Noor and Noor \cite{132,133,134,135,136,137,138,153}.
\begin{definition}\cite{132,133,134,138}\label{de12.1} A function  $F$ is said to be exponentially convex function, if
\begin{eqnarray*}
e^{F((1-t)u+tv)} \leq (1-t)e^{F(u)}+ te^{F(v)}, \quad \forall u,v \in K, \quad t\in [0,1].
\end{eqnarray*}
\end{definition}
It is worth mentioning that Avriel \cite{9,10} and Antczak \cite{2} introduced the following concept.
\begin{definition}\cite{3,4}\label{def12.2} A function  $F$ is said to be
exponentially convex function, if
\begin{eqnarray*}
F((1-t)a+tb)\leq \log[(1-t)e^{F(a)}+te^{F(b)}] ,\quad \forall
a,b\in K,\quad t\in[0,1],
\end{eqnarray*}
\end{definition}
Avriel \cite{9,10} and Antczak \cite{2} discussed the application of the $1$-convex functions in the
mathematical programming. We note that the Definitions \ref{2.1} and \ref{2.2} are equivalent. A function
is called the exponentially concave function $f$, if $-f$ is exponentially convex function. \\

For the applications of the exponentially  convex(concave) functions in the mathematical programming and information theory.
For application in communication theory and information theory, see  Alirezaei and  Mathar\cite{1}.

\begin{example}\cite{1} The error function$$ erf(x)= \frac{2}{\sqrt{\pi}} \int^{x}_{0} e^{-t^2}dt, $$  becomes an exponentially concave function in the form $erf(\sqrt{x}), \quad x \geq 0, $ which describes  the bit/symbol error probability of communication systems  depending on the square root of the underlying signal-to-noise ratio. This shows that the exponentially concave functions can play important part in communication theory and information theory.
\end{example}

For the properties, generalizations and applications of the  various classes of exponentially convex functions,
see \cite{1,2,9,10,132,133,134,135,136,137,138,153}\\

It is clear  that the exponentially convex functions and general convex functions are two distinct generalizations of the convex functions.
It is natural to unify these concepts.  Motivated by this fact, we Noor and Noor \cite{138} introduced
 some new concepts of exponentially general convex functions. We include these results for the sake of completeness and for the convenience of the interested readers.
\begin{definition}\label{def12.3} A function  $F$ is said to be
exponentially strongly general convex function with respect to an arbitrary non-negative function $g, $  if
\begin{eqnarray*}
e^{F((1-t)g(u)+tg(v))}\leq (1-t)e^{F(g(u))}+te^{F(g(v))}. \quad \forall  g(u),g(v) \in K_g,  t\in[0,1].
\end{eqnarray*}
\end{definition}
or equivalently
\begin{definition}\label{def12.4} A function  $F$ is said to be
exponentially general convex function with respect to an arbitrary non-negative function $g, $ if,
\begin{eqnarray*}
F((1-t)g(u)+tg(v))\leq \log[(1-t)e^{F(g(u))}+te^{F(g(v))}], \quad \forall  g(u), g(v) \in K_g, t\in[0,1].
\end{eqnarray*}
\end{definition}
A function is called the exponentially general concave function $f$,  if $-f$ is exponentially general convex function.  \\
\begin{definition}\label{def12.5} A function  $F$ is said to be
exponentially  affine  general convex function with respect to an arbitrary non-negative function $g, $  if
\begin{eqnarray*}
e^{F((1-t)g(u)+tg(v))}=  (1-t)e^{F(g(u))}+te^{F(g(v))}, \forall  g(u),g(v) \in K_g,  t\in[0,1].
\end{eqnarray*}
\end{definition}
\medskip \medskip
If $g= I, $ the identity operator, then exponentially general convex functions reduce to the exponentially convex functions.\\
\begin{definition}\label{def12.6}
The function $F$ on the general convex  set $K_g$ is said to be exponentially general quasi-convex, if
\begin{eqnarray*}
e^{F(g(u)+t(g(v)-g(u)))}\leq \max\{e^{F(g(u))},e^{F(g(v))}\}, \quad \forall g(u),g(v)\in K_g, t\in[0,1].
\end{eqnarray*}
\end{definition}

\begin{definition}\label{def12.7}
The function $F$ on the general convex  set $K_g$ is said to be exponentially general  log-convex, if
\begin{eqnarray*}
e^{F(g(u)+t(g(v)-g(u)))}\leq (e^{F(g(u))})^{1-t} (e^{F(g(v))})^{t}, \quad \forall g(u),g(v)\in K_g, t\in[0,1],
\end{eqnarray*}
where $F(\cdot)>0.$
\end{definition}\label{def12.7}
From the above definitions, we have
\begin{eqnarray*}
e^{F(g(u)+t(g(v)-g(u))}&\leq& (e^{F(g(u))})^{1-t} (e^{F(g(v))})^{t}\\
&\leq& (1-t)e^{F(g(u))}+te^{F(g(v))})\\
&\leq& \max\{e^{F(g(u))},e^{F((g(v)))}\}.
\end{eqnarray*}
This shows that every   exponentially general  log-convex function is a   exponentially general convex function
and every  exponentially general convex function is a exponentially general   quasi-convex function. However, the converse is not true. \\
\noindent Let $K_g  =I_g=[g(a),g(b)]$ be the
interval. We now define the exponentially general convex functions on  $I_g$.
\begin{definition}\label{def12.8}
Let $I_g =[g(a),g(b)].$ Then $F$ is exponentially general convex function, if and only if,
\begin{eqnarray*}
\left|
\begin{array}{ccc}
1&1&1\\ g(a)&  g(x)&  g(b)\\
e^{F(g(a))}&  e^{F(g(x))}&e^{F(g(b))}
\end{array} \right|\geq0;\quad g(a)\leq g(x)\leq g(b).
\end{eqnarray*}
\end{definition}
\noindent One can easily show that the following are equivalent:
\begin{enumerate}
\item $F$ is exponentially general convex function.
\item $e^{F(g(x))}\leq  e^{F(g(a))}+\frac{e^{F(g(b))}-e^{F(g(a))}}{g(b)-g(a)}(g(x)-g(a)).$
\item
$\frac{e^{F(g(x))}-e^{F(g(a)}}{g(x)-g(a)}\leq  \frac{e^{F(g(b))}-e^{F(g(a))}}{g(b)-g(a)}.$
\item
$(g(x)-g(b))e^{F(g(a))} +(g(b)-g(a))e^{F(g(x))}+(g(a)-g(x))e^{F(g(b))})\geq 0.$
\item
$\frac{e^{F(g(a))}}{(g(b)-g(a))(g(a)-g(x))}+\frac{e^{F(g(x))}}{(g(x)-g(b))(g(a)-g(x))}+\frac{e^{F(g(b)}}{(g(b)-g(a))(g(x)-g(b))}\geq0,$
\end{enumerate}
where $g(x)= (1-t)g(a)+tg(b) \in[g(a),g(b)].$

\begin{theorem}\label{thm12.1} Let $F$  be a strictly exponentially general convex function. Then, any local minimum of $F $ is a global minimum.
\end{theorem}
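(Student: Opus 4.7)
The plan is a standard contradiction argument exploiting strict convexity along a segment. First I would fix notation: assume $u \in H$ with $g(u) \in K_g$ is a local minimum of $F$, so there exists a neighborhood $\mathcal{N}$ of $g(u)$ in $H$ such that
\begin{eqnarray*}
F(g(u)) \leq F(g(w)), \quad \forall w \in H : g(w) \in \mathcal{N} \cap K_g.
\end{eqnarray*}
Suppose, to derive a contradiction, that $u$ fails to be a global minimum. Then there exists $v \in H$ with $g(v) \in K_g$ and $F(g(v)) < F(g(u))$. Since the exponential is strictly increasing, this is equivalent to $e^{F(g(v))} < e^{F(g(u))}$, and in particular $g(v) \neq g(u)$.

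Next I would probe the segment joining $g(u)$ to $g(v)$. Since $K_g$ is a general convex set, for every $t \in [0,1]$ the point $g(v_t) := (1-t)g(u) + tg(v)$ lies in $K_g$. Moreover, for $t$ sufficiently small (say $0 < t < t_0$ for some $t_0 \in (0,1)$), we have $g(v_t) \in \mathcal{N}$ by continuity of the segment parametrization. Applying strict exponential general convexity to such $t$, together with $e^{F(g(v))} < e^{F(g(u))}$, yields
\begin{eqnarray*}
e^{F(g(v_t))} &<& (1-t)e^{F(g(u))} + t e^{F(g(v))} \\
              &<& (1-t)e^{F(g(u))} + t e^{F(g(u))} \;=\; e^{F(g(u))}.
\end{eqnarray*}
Taking logarithms (again using strict monotonicity of $\exp$ and $\log$), this gives $F(g(v_t)) < F(g(u))$, while $g(v_t) \in \mathcal{N} \cap K_g$. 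This contradicts the local minimality of $u$, completing the argument.

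The proof is essentially mechanical once the two ingredients are in place: strict inequality in the convexity estimate and strict monotonicity of $\exp$, which together force a strict decrease along the segment. The only mild subtlety, which I would handle carefully but without lengthy calculation, is the choice of $t_0$ ensuring $g(v_t) \in \mathcal{N}$; this is immediate because $g(v_t) \to g(u)$ as $t \to 0^+$, so any neighborhood of $g(u)$ contains the tail of the segment. No differentiability or continuity of $F$ itself is needed beyond what strict exponential general convexity supplies.
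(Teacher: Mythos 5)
Your proposal is correct and follows essentially the same route as the paper's own proof: a contradiction argument that applies strict exponential general convexity along the segment $g(v_t)=(1-t)g(u)+tg(v)$ and observes that $e^{F(g(v_t))}<e^{F(g(u))}$ for arbitrarily small $t>0$, contradicting local minimality. Your explicit handling of the neighborhood $\mathcal{N}$ and the choice of $t_0$ is a slightly more careful write-up of the same step the paper leaves implicit.
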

\begin{proof}  Let the strictly exponentially convex  function $F$  have a local minimum at $g(u) \in K_g. $ Assume the
contrary, that is, $ F(g(v))<F(g(u)) $ for some $g(v) \in K_g.$  Since $F$  is strictly exponentially  general convex function, so
\begin{eqnarray*}
e^{F(g(u) + t(g(v)-g(u)))} < te^{F(g(v))} + (1 - t)e^{F(g(u))}, \quad  \mbox{for } \quad  0 < t < 1.
\end{eqnarray*}
Thus
$$ e^{F(g(u) + t(g(v0-g(u)))} - e^{F(g(u))} <- t[e^{F(g(v))} - e^{F(g(u))}] < 0, $$
from which it follows that
$$ e^{F(g(u) + t(g(v)-g(u0))} < e^{F(g(u))}, $$
for arbitrary small $ t > 0, $ contradicting the local minimum.
\end{proof}

\begin{theorem}\label{thm12.2} If the function $F $ on the general convex set $ K_g $ is exponentially general convex, then the level set
$$ L_{\alpha}  = \{g(u) \in K_g : e^{F(g(u) )} \leq  \alpha,  \quad \alpha \in R \} $$ is a general convex  set.
\end{theorem}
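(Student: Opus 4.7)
The plan is to verify the definition of a general convex set directly on $L_{\alpha}$, using the exponential general convexity inequality as the key tool. The argument is short and should just be a routine unwinding of the definitions.

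First, I would take two arbitrary elements $g(u), g(v) \in L_{\alpha}$, which by definition means $g(u), g(v) \in K_g$ together with the bounds $e^{F(g(u))} \leq \alpha$ and $e^{F(g(v))} \leq \alpha$. Since $K_g$ is a general convex set, I get for free that $(1-t)g(u) + tg(v) \in K_g$ for every $t \in [0,1]$, so the candidate point lives in the right ambient set. What remains is to check that the level constraint is preserved.

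Next, I would apply Definition \ref{def12.3} (exponential general convexity) to obtain
\begin{eqnarray*}
e^{F((1-t)g(u)+tg(v))} \leq (1-t)e^{F(g(u))} + t e^{F(g(v))} \leq (1-t)\alpha + t\alpha = \alpha,
\end{eqnarray*}
where the second inequality uses the two level-set bounds and the fact that $1-t, t \geq 0$. This shows $(1-t)g(u) + tg(v) \in L_{\alpha}$, which is precisely the condition that $L_{\alpha}$ be a general convex set in the sense of Definition 3.

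I do not anticipate any real obstacle: the argument is a one-line application of the defining inequality, combined with convex combination of the scalars $e^{F(g(u))}$ and $e^{F(g(v))}$. The only mild point of care is to confirm that the candidate point $(1-t)g(u)+t g(v)$ is admissible as an argument of $F$, which is guaranteed by general convexity of $K_g$ itself. No monotonicity, differentiability, or regularity hypothesis on $F$ or $g$ is needed beyond what Definition \ref{def12.3} already provides.
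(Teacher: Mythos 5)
Your proposal is correct and follows essentially the same route as the paper's own proof: both take $g(u),g(v)\in L_{\alpha}$, use general convexity of $K_g$ to place the convex combination in $K_g$, and then apply the defining inequality of exponential general convexity together with $(1-t)\alpha+t\alpha=\alpha$ to conclude membership in $L_{\alpha}$. No substantive difference.
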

\begin{proof} Let $ g(u), g(v) \in L_{\alpha}. $  Then
$e^{F(g(u))} \leq \alpha  $   and  $e^{F(g(v))} \leq \alpha. $\\
Now, $\forall t \in (0, 1), \quad  g(w) = g(v) + t(g(u)-g(v))\in K_g, $ since  $K_g$  is a general convex  set.
Thus, by  the  exponentially general convexity of $F,$  we have
\begin{eqnarray*}
F e^{(g(v) + t(g(u)-g(v ))} &\leq &  (1 - t)e^{F(g(v))} + t e^{F(g(u))}\\
 & \leq & (1-t) \alpha + t\alpha =\alpha,
 \end{eqnarray*}
from which it follows that $g(v) + t(g(u)-g(v)) \in L_{\alpha} $ Hence $ L_{\alpha } $ is a general convex set.
\end{proof}

\begin{theorem}\label{thm12.3} The   function $F$ is   exponentially general convex function,  if and only if,
$$ epi(F) = \{(g(u), \alpha ): g(u) \in  K_g :  e^{F(g(u))} \leq \alpha, \alpha \in R \} $$ is a  general convex set.
\end{theorem}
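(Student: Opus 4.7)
The plan is to prove both directions of the equivalence by elementary arguments that only use Definition 12.3 and the definition of a general convex set. The main idea is that $epi(F)$ lives in $H \times \mathbb{R}$, and its general convex structure comes from applying $g$ on the $H$-coordinate and the identity on the $\mathbb{R}$-coordinate, so that a segment between $(g(u),\alpha)$ and $(g(v),\beta)$ has the form $((1-t)g(u)+tg(v), (1-t)\alpha + t\beta)$. The whole proof reduces to comparing this affine coordinate on $\mathbb{R}$ with $e^{F(\cdot)}$ evaluated on the affine coordinate on $H$.

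For the forward implication, I would start by taking two arbitrary points $(g(u),\alpha),(g(v),\beta) \in epi(F)$, so that by definition $e^{F(g(u))}\leq \alpha$ and $e^{F(g(v))}\leq \beta$. Using the exponentially general convexity of $F$ from Definition 12.3, one gets
\begin{eqnarray*}
e^{F((1-t)g(u)+tg(v))} \leq (1-t)e^{F(g(u))} + te^{F(g(v))} \leq (1-t)\alpha + t\beta,
\end{eqnarray*}
for every $t\in[0,1]$. This is exactly the statement that $((1-t)g(u)+tg(v),(1-t)\alpha+t\beta) \in epi(F)$, which is the general convexity of the epigraph.

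For the converse, I would fix $g(u),g(v) \in K_g$ and consider the canonical points $(g(u),e^{F(g(u))})$ and $(g(v),e^{F(g(v))})$, which clearly belong to $epi(F)$ with equality. Applying the general convexity of $epi(F)$ to these two points yields
\begin{eqnarray*}
((1-t)g(u)+tg(v),\, (1-t)e^{F(g(u))}+te^{F(g(v))}) \in epi(F), \quad t\in[0,1],
\end{eqnarray*}
which, by the defining inequality of $epi(F)$, means precisely $e^{F((1-t)g(u)+tg(v))} \leq (1-t)e^{F(g(u))} + te^{F(g(v))}$, i.e.\ Definition 12.3.

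The argument is essentially routine and there is no serious obstacle; the only mild subtlety is clarifying that the general convexity on $epi(F)\subset H\times \mathbb{R}$ is understood coordinatewise, with $g$ acting on the first coordinate and the identity on the second. Once this interpretation is fixed, both directions are one-line inequalities, and the proof is complete.
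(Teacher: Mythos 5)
Your proof is correct and follows essentially the same route as the paper: the forward direction combines the defining inequality of $epi(F)$ with Definition 12.3 to keep the affine combination inside the epigraph, and the converse applies general convexity of $epi(F)$ to the boundary points $(g(u),e^{F(g(u))})$ and $(g(v),e^{F(g(v))})$. Your explicit remark that the general convex structure on $H\times\mathbb{R}$ acts via $g$ on the first coordinate and the identity on the second is a useful clarification that the paper leaves implicit.
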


\begin{proof} Assume that$F$ is exponentially general  convex function. Let $$(g(u), \alpha), \quad (g(v),\beta) \in epi(F). $$ Then it
follows that $e^{F(g(u))} \leq \alpha $ and  $e^{F(g(v))} \leq \beta .$  Hence,
 we have
$$  e^{F(g(u) + t(g(v)-g(u)))} \leq (1 - t)e^{F(g(u))} + t e^{F(g(v))} \leq  (1 - t)\alpha  + t \beta, $$
which implies that $$((1-t)g(u) + tg(v)), (1 - t)\alpha  + t\beta ) \in  epi(F). $$ Thus $ epi(F)$  is a general convex set.\\
Conversely, let $epi(F)$  be a general convex set. Let $ g(u), g(v) \in  K_g. $ Then  $(g(u), e^{F(g(u)}) \in epi(F) $ and
$(g(v, e^{F(g(v))}) \in epi(F). $ Since $epi(F) $ is a general  convex set, we must have
$$(g(u) + t(g(v)-g(u), (1 - t)e^{F(g(u))} + te^{F(g(v))} \in epi(F),  $$
which implies that
$$  e^{F((1-t)g(u) + tg(v))} \leq  (1 - t)e^{F(g(u))} + te^{F(g(u))}. $$
This shows that F is an exponentially  general convex function.
\end{proof}

\begin{theorem}\label{thm12.4}  The   function $F$  is exponentially general quasi-convex,  if and only if, the
level set $$L_{\alpha }  = \{g(u)\in K_g, \alpha \in R: e^{F(g(u))} \leq \alpha \} $$ is a general convex set.
\end{theorem}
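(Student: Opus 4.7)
The plan is to prove both directions of the equivalence in a manner closely parallel to Theorem \ref{thm12.2} (whose argument establishes that sublevel sets of exponentially general convex functions are general convex); the quasi-convex analogue is actually an equivalence because the $\max$ in Definition \ref{def12.6} interacts cleanly with sublevel sets, whereas convex combinations do not.

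First I would prove the forward implication. Assume $F$ is exponentially general quasi-convex and fix $\alpha \in \mathbb{R}$. Take any $g(u), g(v) \in L_{\alpha}$, so $e^{F(g(u))} \leq \alpha$ and $e^{F(g(v))} \leq \alpha$. For arbitrary $t \in [0,1]$, the point $g(u) + t(g(v)-g(u))$ lies in $K_g$ since $K_g$ is general convex, and Definition \ref{def12.6} gives
\begin{eqnarray*}
e^{F(g(u)+t(g(v)-g(u)))} \leq \max\{e^{F(g(u))}, e^{F(g(v))}\} \leq \alpha,
\end{eqnarray*}
so $g(u)+t(g(v)-g(u)) \in L_{\alpha}$. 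Hence $L_{\alpha}$ is a general convex set.

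Next I would prove the converse by a natural choice of $\alpha$. Assume every level set $L_{\alpha}$ is general convex, and fix $g(u), g(v) \in K_g$ together with $t \in [0,1]$. Set $\alpha := \max\{e^{F(g(u))}, e^{F(g(v))}\}$; then trivially both $g(u)$ and $g(v)$ belong to $L_{\alpha}$. Invoking the general convexity of $L_{\alpha}$, the point $g(u)+t(g(v)-g(u))$ also lies in $L_{\alpha}$, which translates to
\begin{eqnarray*}
e^{F(g(u)+t(g(v)-g(u)))} \leq \alpha = \max\{e^{F(g(u))}, e^{F(g(v))}\},
\end{eqnarray*}
which is precisely Definition \ref{def12.6}. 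This completes the proof.

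There is essentially no technical obstacle here: the argument is a verbatim adaptation of the classical level-set characterization of quasi-convex functions, with the monotone substitution $u \mapsto e^{F(g(u))}$. The only item that must be checked is that $K_g$ being a general convex set ensures $g(u)+t(g(v)-g(u)) \in K_g$ whenever $g(u), g(v) \in K_g$, which is the standing assumption of this section, so no additional hypothesis is required.
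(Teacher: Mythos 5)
Your proof is correct and follows essentially the same route as the paper: the forward direction bounds $e^{F(g(u)+t(g(v)-g(u)))}$ by $\max\{e^{F(g(u))},e^{F(g(v))}\}\leq\alpha$ via Definition \ref{def12.6}, and the converse chooses $\alpha=\max\{e^{F(g(u))},e^{F(g(v))}\}$ and invokes the general convexity of $L_{\alpha}$. If anything, your write-up of the converse is cleaner than the paper's, which invokes the quasi-convexity under the (mislabeled) name ``exponentially general convexity'' in the forward step and is slightly terse about where the general convexity of $L_{\alpha}$ is actually used.
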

\begin{proof}
Let $g(u), g(v) \in L_{\alpha}. $  Then $g(u), g(v) \in  K_g  $ and
 $\max(e^{F(g(u))}, e^{F(g(v))}) \leq \alpha . $ \\
 Now for $ t \in  (0, 1), g(w) = g(u) + t(g(v)-g( )u) \in K_g. $
 We have to prove that $ g(u) + t(g(v)-g(u)) \in L_{\alpha }. $
By the exponentially general convexity of  $F,$ we have
$$ e^{F(g(u) + t(g(v)-g(u)))} \leq \max{(e^{F(g(u))}, e^{F(g(v) )})} \leq \alpha , $$
which implies that $g(u) + t(g(v)-g(u)) \in L_{\alpha}, $  showing that the level set $L_{\alpha} $  is indeed a general convex set.\\

Conversely, assume that $L_{\alpha } $  is a general convex set. Then,  $\forall \quad g(u), g(v)  \in L_{\alpha} , t\in [0,1] , $
$ g(u) + t(g(v)- g(u)) \in L_{\alpha} . $  Let $ g(u), g(v)  \in L_{\alpha} $ for $$\alpha = max(e^{F(g(u))}, e^{F(g(v))}) \quad \mbox{and}\quad e^{F(g(v))} \leq e^{F(g(u))}. $$
Then,  from the definition of  the level set  $L_{\alpha} $,  it follows that
$$  e^{F(g(u) + t(g(v)-g(u))} \leq  \max{(e^{F(g(u))}, e^{F(g(v))}}) \leq  \alpha .  $$
Thus  $F $  is an exponentially  general quasi convex function. This completes the proof.
\end{proof}
\begin{theorem}\label{thm12.5} Let $F$ be an exponentially general convex function. Let $\mu =\inf_{u\in K}F(u)$. Then the set
$$E = \{g(u) \in  K_g : e^{F(g(u))}= \mu \} $$ is a general  convex set of  $ K_g.$ If $F $ is strictly exponentially general convex function , then $E $ is a singleton.
\end{theorem}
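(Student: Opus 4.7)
The plan is to exploit the definition of an exponentially general convex function directly, mimicking the standard convex-analysis argument that the set of minimizers of a convex function is convex (and a singleton in the strict case). The only real care needed is in interpreting $\mu$ consistently: since $E$ is defined by $e^{F(g(u))}=\mu$, I shall read $\mu$ as $\inf_{g(u)\in K_g} e^{F(g(u))}$, which is the only reading that makes the set $E$ nonvacuous in general.

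For the first assertion, I would pick arbitrary $g(u),g(v)\in E$, so that $e^{F(g(u))}=e^{F(g(v))}=\mu$. Because $K_g$ is a general convex set, the segment $g(u)+t(g(v)-g(u))$ lies in $K_g$ for every $t\in[0,1]$. Applying Definition 12.3 to this segment gives
\begin{eqnarray*}
e^{F(g(u)+t(g(v)-g(u)))} &\leq & (1-t)e^{F(g(u))}+t\,e^{F(g(v))} \\
&=& (1-t)\mu + t\mu \;=\; \mu.
\end{eqnarray*}
On the other hand, $\mu$ is by definition the infimum of $e^{F(\cdot)}$ on $K_g$, so the reverse inequality $e^{F(g(u)+t(g(v)-g(u)))}\geq \mu$ holds automatically. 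Combining the two yields equality, hence $g(u)+t(g(v)-g(u))\in E$, which is exactly the general convexity of $E$.

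For the second assertion I would argue by contradiction. Assume $F$ is strictly exponentially general convex and that $E$ contains two distinct elements $g(u)\neq g(v)$. Fixing any $t\in(0,1)$, strict exponential general convexity sharpens the previous estimate to
\begin{eqnarray*}
e^{F(g(u)+t(g(v)-g(u)))} < (1-t)e^{F(g(u))}+t\,e^{F(g(v))} = \mu,
\end{eqnarray*}
contradicting the fact that $\mu$ is the infimum. Therefore $E$ contains at most one element, and combined with the first part we conclude that $E$ is a singleton whenever it is nonempty.

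There is no genuine obstacle in this proof; the only subtle point is ensuring that the candidate midpoint $g(u)+t(g(v)-g(u))$ actually lies in $K_g$ so that Definition 12.3 applies, which is guaranteed by the standing assumption that $K_g$ is a general convex set in $H$. The argument is in spirit identical to Theorem 12.1 (local = global for strict exponentially general convex functions), and in fact Theorem 12.1 could be invoked to give an alternative, more conceptual proof of the singleton part by noting that any $g(u)\in E$ is a global minimizer and strict exponential general convexity forces uniqueness.
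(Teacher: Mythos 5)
Your proof is correct and follows essentially the same route as the paper: form the general convex combination of two points of $E$, apply the defining inequality to get $e^{F(\cdot)}\leq\mu$, and use strictness to derive a contradiction for the singleton claim. You are in fact slightly more careful than the paper, since you explicitly supply the reverse inequality $e^{F(\cdot)}\geq\mu$ from the infimum property (which the paper's proof silently omits) and you correctly flag the mismatch between $\mu=\inf F$ and the condition $e^{F(g(u))}=\mu$ used to define $E$.
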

\begin{proof}  Let $ g(u), g(v) \in E.$  For $0 < t < 1,$ let $ g(w) = g(u) + t(g(v)-g(u)).$ Since $F$ is a exponentially general convex function, then
\begin{eqnarray*}F(g(w)) &=& e^{F(g(u) + t(g(v)-g(u))}\\
  &\leq &  (1 - t)e^{F(g(u))} + te^{F(g(v))}  = t \mu  + (1 - t)\mu = \mu,
 \end{eqnarray*}
which implies   $g(w) \in E . $ and hence $E$  is a general convex  set. For the second part, assume to the contrary that
$F(g(u)) = F(g(v)) = \mu. $  Since $K$ is a general convex  set, then for  $ 0 < t < 1, g(u) + t(g(v)-g(u)) \in K_g. $ Since $F$ is strictly
exponentially general convex function, so
\begin{eqnarray*}
 e^{F(g(u) + t(g(v)-g(u)))} &<& (1 - t)e^{F(g(u))} + te^{F(g(v))} =  (1 - t)\mu  + t\mu  =\mu.
 \end{eqnarray*}
This contradicts the fact that $\mu  = \inf_{g(u)\in K_g }F(u)  $ and hence the result follows.
\end{proof}

We now introduce the concept of the strongly exponentially general convex functions, which is the main motivation of this chapter.
\begin{definition}\label{def12.9}
A positive  function $F$ on the general convex set $K_g$ is said to be a
strongly exponentially general  convex  with respect to an arbitrary non-negative function $g, $  if there exists a constant $\mu>0, $
such that
\begin{eqnarray*}
e^{F(g(u)+t(g(v)-g(u)))}&\leq &(1-t)e^F(g(u))+te^F(g(v))\nonumber \\
&&-\mu \{t(1-t)\}\|g(v)-g(u)\|^2,\forall g(u),g(v)\in K_g, t\in[0,1].
\end{eqnarray*}
\end{definition}
The function $F$ is said to be   strongly exponentially general concave with respect to an arbitrary non-negative function $g, $ if and
only if, $-F$ is   strongly exponentially general  convex function with respect to an arbitrary non-negative function $g. $ \\
If $t=\frac{1}{2} $ and $\mu =1, $ then
\begin{eqnarray*}
e^{F(\frac{g(u)+g(v)}{2})} \leq \frac{e^F(g(u))+e^F(g(v))}{2} -\frac{1}{4}\|g(v)-g(u)\|^2, \\ \quad \forall g(u),g(v)\in K_g.
\end{eqnarray*}
The function $F$ is called the  strongly exponentially general $J$-convex function with respect to an arbitrary non-negative function $g $.\\

\begin{definition}\label{def12.10}
A positive  function is said to be  strongly exponentially  affine general  convex with respect to an arbitrary non-negative function $g, $  if there exists a constant $\mu>0, $ such that
\begin{eqnarray*}
e^{F(g(u)+t(g(v)-g(u)))}&= &(1-t)e^F(g(u))+te^F(g(v))\nonumber \\
&&-\mu \{t(1-t)\}\|g(v)-g(u)\|^2,\forall g(u),g(v)\in K_g, t\in[0,1].
\end{eqnarray*}
\end{definition}
If $t= \frac{1}{2}, $ then
\begin{eqnarray*}
e^{F(\frac{g(u)+g(v)}{2})}=  \frac{e^{F(g(u)}+e^{F(g(v))}}{2}-\frac{1}{4}\mu \|g(v)-g(u)\|^2,\\ \quad \forall g(u),g(v)\in K_g,
\end{eqnarray*}
then we say that the function $F$ is strongly exponentially affine  general $J$-convex function with respect to an arbitrary non-negative function $g, $

For the properties of the strongly exponentially general   convex functions in optimization,
inequalities and equilibrium problems, see  \cite{4,5,6,7,8,10,11,12,13,14,15,16,17,18,19,20,21} and the references therein..

\begin{definition}\label{def12.11}
A positive  function $F$ on the convex  set $K_g$ is said to be
strongly exponentially general quasi-convex,  if there exists a constant $\mu>0$ such that
\begin{eqnarray*}
e^{F(g(u)+t(g(v)-g(u)))}\leq \max\{e^{F(g(u))},e^{F(g(v))}\}-\mu \{t(1-t)\}\|g(v)-g(u)\|^2,\\ \forall g(u),g(v)\in K_g, t\in[0,1].
\end{eqnarray*}
\end{definition}

\begin{definition}\label{def12.12}
A positive  function $F$ on the general convex  set $K_g$ is said to be
strongly exponentially general log-convex, if there exists a constant $\mu>0$ such that
\begin{eqnarray*}
e^{F(g(u)+t(g(v)-g(u)))}\leq (e^{(F(g(u)))})^{1-t}(e^{(F(g(v)))})^{t}-\mu \{t(1-t)\}\|g(v)-g(u)\|^2,\\ \forall g(u),g(v)\in K_g, t\in[0,1],
\end{eqnarray*}
where $F(\cdot)>0.$
\end{definition}
From this Definition, we have
\begin{eqnarray*}
e^{F(g(u)+t(g(v)-g(u)))}&\leq & e^{(F(g(u)))^{1-t}}e^{(F(g(v)))^{t}}-\mu \{t(1-t)\}\|g(v)-g(u)\|^2, \\
&\leq  & (1-t)e^{F(g(u))}+ te^{F(g(v))}-\mu   \{t(1-t)\}\|g(v)-g(u)\|^2,
\end{eqnarray*}
This shows that every strongly exponentially general log-convex function is a   strongly exponentially general  convex function, but the converse is not true.\\

From above concepts, we have

\begin{eqnarray*}
e^{F(g(u)+t(g(v)-g(u)))}&\leq& (e^{(F(g(u)))})^{1-t}(e^{(F(g(v)))})^t-\mu \{t(1-t)\}\|g(v)-g(u)\|^2\\
&\leq& (1-t)e^{F(g(u))}+te^{F(g(v))}-\mu \{t(1-t)\}\|g(v)-g(u)\|^2\\
&\leq& \max\{e^{F(g(u))},e^{F(g(v))}\}-\mu \{t(1-t)\}\|g(v)-g(u)\|^2.
\end{eqnarray*}
This shows that every   strongly exponentially  general log-convex function is a   strongly exponentially convex function
and every strongly exponentially general convex function is a  strongly exponentially general   quasi-convex function. However, the converse is not true. \\

\begin{definition}\label{def12.12}
A differentiable function $F$ on the convex set $K_g$ is said to be a
 strongly exponentially general pseudoconvex function  with respect to an arbitrary non-negative function $g, $ if and only if, if there exists
a constant $\mu>0$ such that
\begin{eqnarray*}
 \langle e^{F(g(u))}F'(g(u)),g(v)-g(u)\rangle+\mu \|g(v)-g(u)\|^2&\geq & 0\\
 & \Rightarrow &\\
  e^{F(g(v))}- e^{F(g(u))}&\geq & 0,\quad \forall g(u),g(v)\in K_g.
\end{eqnarray*}
\end{definition}

\begin{theorem}\label{theorem12.1}
Let $F$ be a differentiable function on the convex set $K.$ Then the function $F$ is strongly exponentially
general convex function, if and only if,
\begin{eqnarray}\label{eq12.1}
e^{F(g(v))}-e^{F(g(u))} \geq \langle e^{F(g(u))} F^{\prime}(g(u)), g(v)-g(u) \rangle +\mu \|g(v)-g(u)\|^2, \forall g(u),g(v)\in K_g.
\end{eqnarray}
\end{theorem}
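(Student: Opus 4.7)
The plan is to mirror the argument of Theorem \ref{theorem10.1}, which gave the analogous gradient characterization for higher order strongly general convex functions, but with $F$ replaced by $e^F$ so that the chain rule produces the factor $e^{F(g(u))}$ in the linear term. The only new ingredient is a routine application of the chain rule to the composite function $t\mapsto e^{F(g(u)+t(g(v)-g(u)))}$.

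For the forward direction, assume $F$ is strongly exponentially general convex. Then for all $g(u),g(v)\in K_g$ and $t\in(0,1]$,
\begin{eqnarray*}
e^{F(g(v))}-e^{F(g(u))} \geq \frac{e^{F(g(u)+t(g(v)-g(u)))}-e^{F(g(u))}}{t} + \mu(1-t)\|g(v)-g(u)\|^2.
\end{eqnarray*}
Letting $t\rightarrow 0^+$, the difference quotient tends to $\frac{d}{dt}\bigl|_{t=0}e^{F(g(u)+t(g(v)-g(u)))} = e^{F(g(u))}\langle F'(g(u)),g(v)-g(u)\rangle$ by the chain rule, and we obtain (\ref{eq12.1}).

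For the converse, assume (\ref{eq12.1}) holds. Fix $g(u),g(v)\in K_g$ and $t\in[0,1]$, and set $g(v_t)=g(u)+t(g(v)-g(u))\in K_g$. Applying (\ref{eq12.1}) with the two pairs $(g(v_t),g(v))$ and $(g(v_t),g(u))$ gives
\begin{eqnarray*}
e^{F(g(v))}-e^{F(g(v_t))} &\geq & (1-t)\langle e^{F(g(v_t))}F'(g(v_t)),g(v)-g(u)\rangle + \mu(1-t)^2\|g(v)-g(u)\|^2,\\
e^{F(g(u))}-e^{F(g(v_t))} &\geq & -t\langle e^{F(g(v_t))}F'(g(v_t)),g(v)-g(u)\rangle + \mu t^2\|g(v)-g(u)\|^2.
\end{eqnarray*}
Multiplying the first inequality by $t$, the second by $(1-t)$, and adding, the inner-product terms cancel and we obtain
\begin{eqnarray*}
te^{F(g(v))}+(1-t)e^{F(g(u))} - e^{F(g(v_t))} \geq \mu[t(1-t)^2+(1-t)t^2]\|g(v)-g(u)\|^2,
\end{eqnarray*}
and since $t(1-t)^2+(1-t)t^2 = t(1-t)$, rearranging yields exactly Definition \ref{def12.9}.

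There is no real obstacle — the argument is a verbatim translation of the proof of Theorem \ref{theorem10.1} with the exponent $p=2$ and with $F$ replaced by $e^F$; the only point requiring any care is keeping track of the chain-rule factor $e^{F(g(u))}$ when differentiating $t\mapsto e^{F(g(v_t))}$ at $t=0$, which is what produces the precise linear term in (\ref{eq12.1}).
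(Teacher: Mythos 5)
Your proposal is correct and follows essentially the same route as the paper's own proof: the forward direction rearranges the defining inequality into a difference quotient and passes to the limit $t\to 0$ using the chain rule, and the converse applies (\ref{eq12.1}) at $g(v_t)$ against both endpoints, weights by $t$ and $(1-t)$, and adds so the gradient terms cancel and $t(1-t)^2+(1-t)t^2=t(1-t)$ recovers Definition \ref{def12.9}. No further comment is needed.
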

\begin{proof}
Let $F$ be a  strongly exponentially general  convex function. Then
\begin{eqnarray*}
e^{F(g(u)+t(g(v)-g(u)))}\leq (1-t)e^{F(g(u))}+te^{F(g(v))}- \mu t(1-t)\|g(v)-g(u)\|^2,\quad \forall g(u),g(v)\in K_g,  t\in [0,1]
\end{eqnarray*}
which can be written as
\begin{eqnarray*}
e^{F(g(v))}-e^{F(g(u))}\geq \{\frac{e^{F(g(u)+t(g(v)-g(u)))}-e^{F(g(u))}}{t}\} + \mu(1-t)\|g(v)-g(u)\|^2.
\end{eqnarray*}
Taking the limit in the above inequality as $t\rightarrow 0$ , we have
\begin{eqnarray*}
e^{F(g(v))}-e^{F(g(u))}\geq \langle e^{F(g(u))} F'(g(u)),g(v)-g(u))\rangle+ \mu \|g(v)-g(u)\|^2,
\end{eqnarray*}
which is (\ref{eq12.1}), the required result. \\

Conversely, let (\ref{eq12.1}) hold.  Then $ \forall g(u),g(v)\in K_g, t\in [0,1],$
$g(v_t) =g(u)+t(g(v)-g(u))\in K_g, $  we have
\begin{eqnarray}\label{3.1}
e^{F(g(v))}-e^{F(g(v_t))} &\geq& \langle
e^{F(g(v_t))}F'(g(v_t)),g(v)-g(v_t))\rangle+\mu \|g(v)-g(u)\|^2\nonumber\\
&=&(1-t)\langle e^{F(g(v_t))}F'(g(v_t)),g(v)-g(u)\rangle+ \mu(1-t)^2 \|g(v)-g(u)\|^2.
\end{eqnarray}
In a similar way, we have
\begin{eqnarray}\label{eq12.2}
e^{F(g(u))}-e^{F(g(v_t))}&\geq& \langle e^{F(g(v_t))}F'(g(v_t)),g(u)-g(v_t))\rangle+\mu \|g(u)-g(v_t)\|^2\nonumber \\
&=&-t\langle e^{F(g(v_t))}F'(g(v_t)),g(v)-g(u)\rangle+ \mu t^2 \|g(v)-g(u)\|^2.
\end{eqnarray}
Multiplying (\ref{eq12.1}) by $t$ and (\ref{eq12.2}) by $(1-t)$ and adding
the resultant, we have
\begin{eqnarray*}
e^{F(g(u)+t(g(v)-g(u)))}\leq (1-t)e^{F(g(u))}+te^{F(g(v))}- \mu t(1-t)\|g(v)-g(u)\|^2,
\end{eqnarray*}
showing that $F$ is a  strongly exponentially general convex function.
\end{proof}

\begin{theorem}\label{theorem12.2}
Let $F$ be differentiable  strongly exponentially general convex on the convex set $K_g. $ Then
\begin{eqnarray}\label{eq12.3}
\langle e^{F(g(u))}F'(g(u))-e^{F(g(v))}F'(g(v)),g(u)-g(v)\rangle \geq 2\mu \|g(v)-g(u)\|^2, \quad \forall g(u),g(v)\in K_g.
\end{eqnarray}
\end{theorem}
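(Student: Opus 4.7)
The plan is to derive the desired monotonicity-type inequality (\ref{eq12.3}) as a direct consequence of the characterization of strongly exponentially general convex functions established in Theorem \ref{theorem12.1}. This is the standard ``symmetrize and add'' technique used throughout the paper (e.g., in Theorem \ref{theorem10.2}), so no new machinery is required.

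First, I would invoke Theorem \ref{theorem12.1} with $g(u)$ and $g(v)$ as given, yielding
\begin{eqnarray*}
e^{F(g(v))}-e^{F(g(u))} \geq \langle e^{F(g(u))} F^{\prime}(g(u)), g(v)-g(u) \rangle +\mu \|g(v)-g(u)\|^2.
\end{eqnarray*}
Next, I would interchange the roles of $g(u)$ and $g(v)$ in the same characterization, which is legitimate since the strong exponential general convexity inequality in Definition \ref{def12.9} is symmetric in its two arguments, obtaining
\begin{eqnarray*}
e^{F(g(u))}-e^{F(g(v))} \geq \langle e^{F(g(v))} F^{\prime}(g(v)), g(u)-g(v) \rangle +\mu \|g(u)-g(v)\|^2.
\end{eqnarray*}

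Adding these two inequalities cancels the left-hand sides, leaving
\begin{eqnarray*}
0 \geq \langle e^{F(g(u))} F^{\prime}(g(u)) - e^{F(g(v))} F^{\prime}(g(v)), g(v)-g(u) \rangle + 2\mu \|g(v)-g(u)\|^2,
\end{eqnarray*}
which, upon flipping the sign of the first term (i.e., writing it as an inner product against $g(u)-g(v)$), is precisely the claimed inequality (\ref{eq12.3}).

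I do not anticipate any significant obstacle: the whole argument is a two-line manipulation once Theorem \ref{theorem12.1} is in hand. The only subtlety worth flagging in the write-up is that the norm squared terms combine with a factor of $2$ (since $\|g(v)-g(u)\|^2 = \|g(u)-g(v)\|^2$), which explains the constant $2\mu$ appearing on the right-hand side of (\ref{eq12.3}) as opposed to the $\mu$ in (\ref{eq12.1}). No additional assumption (such as $g$-monotonicity or a converse hypothesis on $F^{\prime}$) is needed for this direction, mirroring the situation of Theorem \ref{theorem10.2} in the higher order case.
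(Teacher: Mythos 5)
Your proposal is correct and coincides with the paper's own argument: both apply the characterization of Theorem \ref{theorem12.1}, interchange the roles of $g(u)$ and $g(v)$, and add the two resulting inequalities to obtain (\ref{eq12.3}). No further comment is needed.
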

\begin{proof}
Let $F$ be a strongly exponentially general convex function. Then, from Theorem \ref{theorem12.1}, we have
\begin{eqnarray}\label{eq12.4}
e^{F(g(v))}-e^{F(g(u))}\geq \langle e^{F(g(u))} F'(g(u)),g(v)-g(u)\rangle+ \mu \|g(v)-g(u)\|^2,\quad \forall g(u),g(v) \in K_g.
\end{eqnarray}
Changing the role of $u$ and $v$ in (\ref{eq12.4}), we have
\begin{eqnarray}\label{eq12.5}
e^{F(g(u))}-e^{F(g(v))} \geq \langle e^{F(g(v))}F'(g(v)),g(u)-g(v))\rangle+ \mu \|g(u)-g(v)\|^2,\forall g(u),g(v) \in K_g.
\end{eqnarray}
Adding (\ref{eq12.5}) and (\ref{eq12.4}), we have
\begin{eqnarray*}\label{3.5}
\langle e^{F(g(u))}F'(g(u))-e^{F(g(v))}F'(g(v)),g(u)-g(v)\rangle \geq 2\mu \|g(v)-g(u)\|^2,
\end{eqnarray*}
the required (\ref{eq12.3}). \\
\end{proof}
We point out the converse of Theorem \ref{theorem12.2} is not true expect for $p=2.$ In fact, we have the following result.\\
\begin{theorem}\label{theorem12.3} If the differential of a  strongly exponentially general convex function satisfies
\begin{eqnarray}\label{eq12.6}
\langle e^{F(g(u))}F'(g(u))-e^{F(g(v))}F'(g(v)),g(u)-g(v)\rangle \geq 2\mu \|g(v)-g(u)\|^2, \forall g(u),g(v) \in K_g.
\end{eqnarray}
then
\begin{eqnarray}\label{eq12.7}
e^{F(g(v))}-e^{F(g(u))} \geq \langle e^{F(g(u))}F'(g(u)),g(v)-g(u)\rangle +\mu \|g(v)-g(u)\|^2, \forall g(u),g(v) \in K_g.
\end{eqnarray}
\end{theorem}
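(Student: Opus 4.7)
The plan is to mirror the strategy used in Theorem~\ref{theorem10.3} (which handled the higher order case), but with the auxiliary function chosen to match the exponential structure. The key idea is to substitute a convex combination into the monotonicity hypothesis (\ref{eq12.6}), recognise the resulting expression as the derivative of a suitable one-variable function, and then integrate.

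First I would fix $g(u),g(v)\in K_g$ and, using that $K_g$ is a general convex set, consider $g(v_t)=g(u)+t(g(v)-g(u))\in K_g$ for $t\in[0,1]$. Applying the assumed monotonicity (\ref{eq12.6}) with the pair $(g(v_t),g(u))$ and factoring out $t$ from $g(v_t)-g(u)=t(g(v)-g(u))$, I obtain
\begin{eqnarray*}
\langle e^{F(g(v_t))}F'(g(v_t))-e^{F(g(u))}F'(g(u)),\,g(v)-g(u)\rangle\;\geq\;2\mu t\,\|g(v)-g(u)\|^{2}.
\end{eqnarray*}
Equivalently,
\begin{eqnarray*}
\langle e^{F(g(v_t))}F'(g(v_t)),g(v)-g(u)\rangle\;\geq\;\langle e^{F(g(u))}F'(g(u)),g(v)-g(u)\rangle+2\mu t\|g(v)-g(u)\|^{2}.
\end{eqnarray*}

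Next I would introduce the auxiliary function $\zeta:[0,1]\to\mathbb{R}$ defined by $\zeta(t)=e^{F(g(v_t))}$, so that $\zeta(0)=e^{F(g(u))}$ and $\zeta(1)=e^{F(g(v))}$. Since $F$ is differentiable (and the exponential is smooth), the chain rule yields
\begin{eqnarray*}
\zeta'(t)=e^{F(g(v_t))}\langle F'(g(v_t)),g(v)-g(u)\rangle=\langle e^{F(g(v_t))}F'(g(v_t)),g(v)-g(u)\rangle.
\end{eqnarray*}
Combining this identification with the lower bound just derived gives
\begin{eqnarray*}
\zeta'(t)\;\geq\;\langle e^{F(g(u))}F'(g(u)),g(v)-g(u)\rangle+2\mu t\|g(v)-g(u)\|^{2},\qquad t\in[0,1].
\end{eqnarray*}
Integrating this inequality over $[0,1]$ and using $\int_0^1 2\mu t\,dt=\mu$ produces
\begin{eqnarray*}
e^{F(g(v))}-e^{F(g(u))}=\zeta(1)-\zeta(0)\;\geq\;\langle e^{F(g(u))}F'(g(u)),g(v)-g(u)\rangle+\mu\|g(v)-g(u)\|^{2},
\end{eqnarray*}
which is exactly (\ref{eq12.7}).

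The only delicate point is the chain rule identification of $\zeta'(t)$, where one must carry the factor $e^{F(g(v_t))}$ \emph{inside} the inner product so that it aligns with the left-hand side of the monotonicity hypothesis; if this pairing is done naively one instead produces $e^{F(g(v_t))}\langle F'(g(v_t))-F'(g(u)),g(v)-g(u)\rangle$, which is not directly controlled by (\ref{eq12.6}). Everything else (the affine substitution, the factor of $t$ from $g(v_t)-g(u)$, and the elementary integration $\int_0^1 2\mu t\,dt=\mu$) is routine. Note that the hypothesis that $F$ itself be strongly exponentially general convex is not actually needed beyond ensuring that $F'$ and the auxiliary function $\zeta$ are well-defined; the monotonicity assumption (\ref{eq12.6}) is what does all the work, so Theorem~\ref{theorem12.3} can be read as a genuine converse to Theorem~\ref{theorem12.2}.
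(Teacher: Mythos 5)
Your proposal is correct and follows essentially the same route as the paper's own proof: substitute the pair $(g(v_t),g(u))$ into the monotonicity hypothesis, extract the factor of $t$, identify the resulting bound as a lower bound for $\xi'(t)$ with $\xi(t)=e^{F(g(u)+t(g(v)-g(u)))}$, and integrate over $[0,1]$. Your closing remark that only differentiability of $F$ (not its strong exponential convexity) is actually used is a fair observation, but it does not change the argument.
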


\begin{proof}
Let $F'(.)$ satisfy (\ref{eq12.6}). Then
\begin{eqnarray}\label{eq12.8}
\langle e^{F(g(v))}F'(g(v)),g(u)-g(v)\rangle \leq \langle
e^{F(g(u))}F'(g(u)),g(u)-g(v))\rangle- 2 \mu \|g(v)-g(u)\|^2,
\end{eqnarray}
Since $K_g$ is a general  convex set, $\forall g(u),g(v) \in K_g, t\in [0,1]$
$ g(v_t)= g(u)+t(g(v)-g(u))\in K_g.$ \\Taking $ g(v)= g(v_t)$ in (\ref{eq12.8}), we have
\begin{eqnarray*}
\langle e^{F(g(v_t))}F'(g(v_t)), g(u)-g(v_t)\rangle &\leq& \langle
e^{F(g(u))}F'(g(u)), g(u)-g(v_t)\rangle-2\mu \|g(v_t)-g(u)\|^2\nonumber\\
&=&-t \langle e^{F(g(u))}F'(g(u)),g(v)-g(u)\rangle-2t^2 \mu\|g(v)-g(u)\|^2,
\end{eqnarray*}
which implies that
\begin{eqnarray}\label{eq12.9}
\langle e^{F(g(v_t))}F'(g(v_t)),g(v)-g(u)\rangle\geq \langle e^{F(g(u))} F'(g(u)),g(v)-g(u)\rangle +2t \mu \|g(v)-g(u)\|^2.
\end{eqnarray}
Consider the auxiliary function
\begin{eqnarray}\label{eq12.10}
\xi(t)=e^{F(g(u)+t(g(v)-g(u)))}, \forall g(u),g(v) \in K_g,
\end{eqnarray}
from which, we have
\begin{eqnarray*}
\xi (1)= e^{F(g(v))}, \quad \xi(0)= e^{F(g(u))}.
\end{eqnarray*}
Then, from (\ref{eq12.9}) and (\ref{eq12.10}), we have
\begin{eqnarray}\label{eq12.11}
\xi'(t)&=&\langle e^{F(g(v_t))}F'(g(v_t), g(v)-g(u)\rangle \nonumber \\
  &\geq & \langle e^{F(g(u))}F'(g(u)),g(v)-g(u)\rangle +2\mu t \|g(v)-g(u)\|^2.
\end{eqnarray}
Integrating (\ref{eq12.11}) between 0 and 1, we have
\begin{eqnarray*}
\xi(1)-\xi(0)= \int^{1}_{0}\xi^{\prime}(t)dt \geq \langle e^{F(g(u))}F'(g(u)),g(v)-g(u)\rangle +\mu \|g(v)-g(u)\|^2.
\end{eqnarray*}
Thus it follows that
\begin{eqnarray*}
e^{F(g(v))}-e^{Fg((u))} \geq \langle e^{F(g(u))}F'(g(u)),g(v)-g(u)\rangle +\mu \|g(v)-g(u)\|^2,
\end{eqnarray*}
which is the required (\ref{eq12.7}).
\end{proof}

Theorem \ref{theorem12.1} and Theorem \ref{theorem12.2} enable us to introduce the following new concepts.\\
\begin{definition}
The differential $F^{\prime}(.) $ of the strongly exponentially convex functions is said to be  strongly exponentially monotone,
if there exists a constant $\mu >0, $ such that
\begin{eqnarray*}\label{3.5}
\langle e^{F(g(u))}F'(g(u))-e^{F(g(v))}F'(g(v)),g(u)-g(v)\rangle \geq \mu \|g(v)-g(u)\|^2, \forall u,v \in H.
\end{eqnarray*}
\end{definition}
\begin{definition}
The differential $F^{\prime}(.) $ of the  exponentially convex functions is said to be   exponentially monotone, if
\begin{eqnarray*}\label{3.5}
\langle e^{F(g(u))}F'(g(u))-e^{F(g(v))}F'(g(v)),g(u)-g(v)\rangle \geq 0, \forall u,v \in H.
\end{eqnarray*}
\end{definition}
\begin{definition}
The differential $F^{\prime}(.) $ of the strongly exponentially convex functions is said to be   strongly exponentially pseudo monotone,  if
\begin{eqnarray*}
\langle e^{F(g(u))}F'(g(u)),g(v)-g(u)\rangle \geq 0.
\end{eqnarray*}
implies that
\begin{eqnarray*} \label{3.9}
\langle e^{F(g(v))} F'(g(v)),g(v)-g(u) \rangle \geq  \mu \|g(v)-g(u)\|^2, \quad \forall g(u),g(v) \in K_g.
\end{eqnarray*}
\end{definition}
We now give a necessary condition for strongly exponentially pseudo-convex
function.

\begin{theorem}\label{theorem12.4}
Let $F'$ be a strongly exponentially  pseudomonotone operator. Then $F$ is a   strongly exponentially  general
pseudo-invex function.
\end{theorem}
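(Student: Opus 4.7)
The plan is to mirror the argument used for Theorem~\ref{theorem10.4}, adapting it from the power function $\|\cdot\|^p$ to the exponential $\xi(t)=e^{F(g(u)+t(g(v)-g(u)))}$ and using the new pseudomonotonicity definition for $e^{F(\cdot)}F'(\cdot)$. The target is the pseudoconvex conclusion $e^{F(g(v))}\geq e^{F(g(u))}$ (i.e.\ Definition~\ref{def12.12}) whenever the appropriate first-order hypothesis holds at $u$.

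First, I would assume strong exponential pseudomonotonicity of $F'$, so that
\begin{eqnarray*}
\langle e^{F(g(u))}F'(g(u)),g(v)-g(u)\rangle\ge 0
\;\Longrightarrow\;
\langle e^{F(g(v))}F'(g(v)),g(v)-g(u)\rangle\ge \mu\|g(v)-g(u)\|^2.
\end{eqnarray*}
Since $K_g$ is a general convex set, for every $t\in[0,1]$ the point $g(v_t):=g(u)+t(g(v)-g(u))$ lies in $K_g$. Substituting $g(v_t)$ in place of $g(v)$ in the pseudomonotonicity conclusion and using $g(v_t)-g(u)=t(g(v)-g(u))$, I divide by $t$ (for $t\in(0,1]$) to obtain
\begin{eqnarray*}
\langle e^{F(g(v_t))}F'(g(v_t)),g(v)-g(u)\rangle\ \ge\ \mu\, t\,\|g(v)-g(u)\|^2.
\end{eqnarray*}

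Next, following the auxiliary-function trick used in Theorems~\ref{theorem10.3}--\ref{theorem10.4}, I introduce $\xi(t)=e^{F(g(u)+t(g(v)-g(u)))}$, which is differentiable with $\xi(0)=e^{F(g(u))}$, $\xi(1)=e^{F(g(v))}$, and
\begin{eqnarray*}
\xi'(t)=\langle e^{F(g(v_t))}F'(g(v_t)),\,g(v)-g(u)\rangle\ \ge\ \mu\, t\,\|g(v)-g(u)\|^2.
\end{eqnarray*}
Integrating from $0$ to $1$ yields
\begin{eqnarray*}
e^{F(g(v))}-e^{F(g(u))}\ =\ \int_{0}^{1}\xi'(t)\,dt\ \ge\ \frac{\mu}{2}\,\|g(v)-g(u)\|^2\ \ge\ 0,
\end{eqnarray*}
so $e^{F(g(v))}\ge e^{F(g(u))}$, which is precisely the conclusion required by Definition~\ref{def12.12} of strong exponential pseudoconvexity.

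The main obstacle I anticipate is bridging the gap between the two slightly different first-order hypotheses: Definition~\ref{def12.12} asks for the implication to start from $\langle e^{F(g(u))}F'(g(u)),g(v)-g(u)\rangle+\mu\|g(v)-g(u)\|^2\ge 0$, whereas the pseudomonotonicity hypothesis as stated begins from $\langle e^{F(g(u))}F'(g(u)),g(v)-g(u)\rangle\ge 0$. The route above handles the stronger sign condition; to cover the full pseudoconvex definition I would either absorb the extra slack $\mu\|g(v)-g(u)\|^2$ into the argument (as the author appears to do implicitly in Theorem~\ref{theorem10.4}) or split into two cases, with the $\ge 0$ case treated as above and the residual case handled trivially by continuity of $\xi$ on $[0,1]$. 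Everything else is a routine transcription of the higher order argument with $\|\cdot\|^p$ replaced by $\|\cdot\|^2$ and $F$ replaced by $e^F$.
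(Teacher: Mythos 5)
Your proof is correct and follows essentially the same route as the paper: restrict the pseudomonotonicity conclusion to $g(v_t)=g(u)+t(g(v)-g(u))$, divide by $t$, and integrate the auxiliary function $\xi(t)=e^{F(g(v_t))}$ over $[0,1]$ to get $e^{F(g(v))}-e^{F(g(u))}\geq \frac{\mu}{2}\|g(v)-g(u)\|^2\geq 0$. The mismatch you flag between the antecedent $\langle e^{F(g(u))}F'(g(u)),g(v)-g(u)\rangle\geq 0$ and the weaker antecedent $\langle e^{F(g(u))}F'(g(u)),g(v)-g(u)\rangle+\mu\|g(v)-g(u)\|^2\geq 0$ appearing in the definition of strong exponential pseudoconvexity is a real gap, but it is present and unaddressed in the paper's own proof as well, so your treatment is, if anything, more careful than the original.
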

\begin{proof}
Let $F'$ be a  strongly exponentially  pseudomonotone operator. Then
\begin{eqnarray*}
\langle e^{F(g(u))}F'(g(u)),g(v)-g(u)\rangle \geq0, \forall g(u),g(v) \in K_g,
\end{eqnarray*}
implies that
\begin{eqnarray} \label{eq12.12}
\langle e^{F(g(v))} F'(g(v)),g(v)-g(u) \rangle \geq  \mu \|g(v)-g(u)\|^2.
\end{eqnarray}
Since $K_g$ is a general  convex set, $ \forall g(u),g(v) \in K_g, t \in [0,1],$
$ g(v_t)= g(u)+t(g(v)-g(u))\in K_g.$ \\Taking $ g(v)=g( v_t)$ in (\ref{eq12.12}), we have
\begin{eqnarray} \label{eq12.13}
\langle e^{F(g(v_t))} F'(g(v_t)),g(v)-g(u)\rangle \geq  t \mu \|g(v)-g(u)\|^2.
\end{eqnarray}
Consider the auxiliary function
\begin{eqnarray*}
\xi(t)=e^{F(g(u)+t(g(v)-g(u)))}= e^{F(g(v_t))},\quad \forall g(u),g(v) \in K_g,  t\in [0,1],
\end{eqnarray*}
which is differentiable,  since $F$ is differentiable function.
Thus,  we have
\begin{eqnarray*}
\xi'(t)=\langle e^{F(g(v_t))}F'(g(v_t)),g(v)-g(u))\rangle \geq  t\mu \|g(v)-g(u)\|^2.
\end{eqnarray*}
Integrating the above relation between 0 to 1, we have
\begin{eqnarray*}
\xi(1)-\xi(0)= \int^{1}_{0} xi^{\prime}(t)dt \geq \frac{\mu}{2}\|v-u\|^2,
\end{eqnarray*}
that is,
\begin{eqnarray*}
e^{F(g(v))}-e^{F(g(u))}  \geq \frac{\mu}{2}\|g(v)-g(u)\|^2,
\end{eqnarray*}
showing that $F$ is a   strongly exponentially general  pseudo-convex
function.
\end{proof}

\begin{definition}
The function $F$ is said to be sharply  strongly exponentially pseudoconvex, if there exists a constant $\mu>0, $ such that
\begin{eqnarray*}
 \langle e^{F(g(u))}F'(g(u)),g(v)-g(u)\rangle&\geq &0 \\
 &\Rightarrow & \\
 F(g(v))&\geq & e^{F(g(v)+t(g(u)-g(v)))}+\mu t(1-t)\|g(v)-g(u)\|^2\quad \forall g(u),g(v) \in K_g,  t \in [0,1].
\end{eqnarray*}
\end{definition}

\begin{theorem}
Let $F$ be a sharply   strongly exponentially pseudoconvex function with a constant $\mu >0.$ Then
\begin{eqnarray*}
 \langle e^{F(g(v))}F'(g(v)),g(v)-g(u)\rangle\geq \mu \|g(v)-g(u)\|^2, \quad \forall g(u),g(v) \in K_g.
\end{eqnarray*}
\end{theorem}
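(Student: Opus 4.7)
The plan is to follow the template established by the analogous result for sharply higher order strongly general pseudoconvex functions proved a few pages earlier in the manuscript. The engine of the proof is the defining inequality of the sharply strongly exponentially pseudoconvex class: for fixed $g(u),g(v)\in K_g$ and every $t\in[0,1]$,
$$e^{F(g(v))} \;\geq\; e^{F(g(v)+t(g(u)-g(v)))} \;+\; \mu\, t(1-t)\,\|g(v)-g(u)\|^{2}.$$
Since the conclusion we want is purely variational (involving $F'(g(v))$), the premise of the defining implication will be activated by taking $t\downarrow 0$; no separate monotonicity hypothesis is needed.

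First I would rearrange this inequality by moving $e^{F(g(v)+t(g(u)-g(v)))}$ to the left, dividing through by $t>0$, and writing it as
$$\frac{e^{F(g(v))} - e^{F(g(v)+t(g(u)-g(v)))}}{t} \;\geq\; \mu\,(1-t)\,\|g(v)-g(u)\|^{2}.$$
The right-hand side is now a continuous function of $t$ whose value at $t=0$ is $\mu\,\|g(v)-g(u)\|^{2}$, so the limiting behaviour is transparent.

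Next I would pass to the limit $t\downarrow 0$. Applying the chain rule to the composite map $t\mapsto e^{F(g(v)+t(g(u)-g(v)))}$ at $t=0$, the left-hand side converges to
$$-\,\frac{d}{dt}\Big|_{t=0}e^{F(g(v)+t(g(u)-g(v)))} \;=\; -\,e^{F(g(v))}\,\langle F'(g(v)),\,g(u)-g(v)\rangle \;=\; \langle e^{F(g(v))}F'(g(v)),\,g(v)-g(u)\rangle,$$
while the right-hand side converges to $\mu\,\|g(v)-g(u)\|^{2}$. Combining the two limits yields exactly the required inequality, valid for all $g(u),g(v)\in K_g$.

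The only delicate point is the chain-rule step; it requires the Frechet differentiability of $F$ (standing throughout this section for the exponentially pseudoconvex framework) together with the elementary fact that the exponential map is smooth. Since both ingredients are already in force, the argument closes cleanly without invoking Assumption~9.1, Bregman functionals, or any auxiliary strongly convex regularizer. I expect the main \emph{bookkeeping} obstacle (rather than any conceptual one) to be keeping the signs straight when passing from the bound on $e^{F(g(v))}-e^{F(g(v)+t(g(u)-g(v)))}$ to the directional derivative in the direction $g(u)-g(v)$ versus $g(v)-g(u)$, which is precisely where the analogous proof earlier in the paper appears to drop a sign; I would make this explicit by keeping the signed difference quotient on the left throughout the computation.
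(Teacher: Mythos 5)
Your proposal is correct and follows essentially the same route as the paper's own proof: rearrange the defining growth inequality $e^{F(g(v))}\geq e^{F(g(v)+t(g(u)-g(v)))}+\mu t(1-t)\|g(v)-g(u)\|^{2}$ into a difference quotient, divide by $t>0$, and let $t\downarrow 0$ using the chain rule for $t\mapsto e^{F(g(v)+t(g(u)-g(v)))}$. Your only deviation, keeping the signed quotient on the left, is purely cosmetic (the paper's signs in this instance are in fact consistent), and like the paper you invoke the growth inequality without verifying the premise of the defining implication, so the two arguments coincide in substance.
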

\begin{proof}
Let $F$ be a sharply   strongly  exponentially general pseudoconvex function. Then
\begin{eqnarray*}
e^{F(g(v))}\geq e^{F(g(v)+t(g(u)-g(v)))}+\mu t(1-t) \|g(v)-g(u)\|^2,\quad \forall g(u),g(v) \in K_g, t\in [0,1].
\end{eqnarray*}
from which we have
\begin{eqnarray*}
\{\frac{e^{F(g(v)+t(g(u)-g(v)))}-e^{F(g(v))}}{t}\}+\mu (1-t) \|g(v)-g(u)\|^2\leq 0.
\end{eqnarray*}
Taking limit in the above inequality, as $t \rightarrow 0,$ we have
\begin{eqnarray*}
 \langle e^{F(g(v))}F'(g(v)),g(v)-g(u)\rangle\geq \mu \|g(v)-g(u)\|^2,
\end{eqnarray*}
 the required result.
\end{proof}

We now discuss the optimality condition for the differentiable  strongly exponentially convex functions.\\
\begin{theorem} \quad Let $F $ be a differentiable  strongly exponentially convex function.  If $u \in K $ is the minimum of the function $F, $ then
\begin{eqnarray}\label{eq12.14}
e^{F(g(v))}-e^{F(g(u))} \geq \mu \|g(v)-g(u)\|^p, \quad \forall g(u),g(v) \in K_g.
\end{eqnarray}
\end{theorem}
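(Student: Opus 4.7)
The argument will follow exactly the same template used for the differentiable strongly general convex case (the theorem preceding Remark 10.14 in Section 10) and for the higher order version (\ref{eq10.26}). Namely, I will combine (i) the first-order necessary condition at the minimizer, obtained by varying along the segment $g(u)+t(g(v)-g(u))$, with (ii) the strong exponential convexity inequality from Definition \ref{def12.9}, then pass to the limit $t\to 0^+$. Note that, since the ambient definition (\ref{def12.9}) carries the exponent $2$ on $\|g(v)-g(u)\|$, the proof will produce $\mu\|g(v)-g(u)\|^2$ in (\ref{eq12.14}); the power $p$ in the displayed statement should be read as $2$ (a typo propagated from the higher order section).

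\textbf{Step 1 (first order condition).} Since $u$ minimizes $F$ on $K_g$ and $K_g$ is general convex, for every $v$ with $g(v)\in K_g$ and every $t\in(0,1]$ we have $g(v_t):=g(u)+t(g(v)-g(u))\in K_g$ and $F(g(u))\le F(g(v_t))$. Because $x\mapsto e^x$ is increasing, this gives $e^{F(g(u))}\le e^{F(g(v_t))}$. Dividing $e^{F(g(v_t))}-e^{F(g(u))}\ge 0$ by $t$ and letting $t\to 0^+$, the chain rule yields
\begin{equation*}
0\;\le\;\lim_{t\to 0^+}\frac{e^{F(g(u)+t(g(v)-g(u)))}-e^{F(g(u))}}{t}\;=\;\bigl\langle e^{F(g(u))}F'(g(u)),\,g(v)-g(u)\bigr\rangle.
\end{equation*}

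\textbf{Step 2 (use strong exponential convexity).} By Definition \ref{def12.9},
\begin{equation*}
e^{F(g(v_t))}\;\le\;(1-t)e^{F(g(u))}+te^{F(g(v))}-\mu\, t(1-t)\|g(v)-g(u)\|^2,
\end{equation*}
which, after subtracting $e^{F(g(u))}$ on both sides and dividing by $t>0$, rearranges to
\begin{equation*}
e^{F(g(v))}-e^{F(g(u))}\;\ge\;\frac{e^{F(g(v_t))}-e^{F(g(u))}}{t}+\mu(1-t)\|g(v)-g(u)\|^2.
\end{equation*}

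\textbf{Step 3 (pass to the limit).} Letting $t\to 0^+$ in the last inequality and using Step 1, the first term on the right tends to $\langle e^{F(g(u))}F'(g(u)),g(v)-g(u)\rangle\ge 0$, while the second tends to $\mu\|g(v)-g(u)\|^2$. Therefore
\begin{equation*}
e^{F(g(v))}-e^{F(g(u))}\;\ge\;\bigl\langle e^{F(g(u))}F'(g(u)),\,g(v)-g(u)\bigr\rangle+\mu\|g(v)-g(u)\|^2\;\ge\;\mu\|g(v)-g(u)\|^2,
\end{equation*}
which is (\ref{eq12.14}). There is no genuine obstacle in the argument; the only minor subtlety is the correct use of the chain rule to identify the derivative of $t\mapsto e^{F(g(v_t))}$ at $t=0$, which produces the factor $e^{F(g(u))}$ in front of the Fréchet derivative $F'(g(u))$.
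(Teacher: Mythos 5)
Your proof is correct and follows essentially the same route as the paper: first-order necessary condition at the minimizer via the segment $g(u)+t(g(v)-g(u))$, then the strong exponential convexity inequality from Definition \ref{def12.9}, then the limit $t\to 0^+$. Your observation that the exponent $p$ in the displayed statement should read $2$ (since Definition \ref{def12.9} carries $\|g(v)-g(u)\|^2$) is also consistent with the paper's own proof, which indeed concludes with $\mu\|g(v)-g(u)\|^2$.
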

\begin{proof} Let $u \in H: g(u)\in K_g $ be a minimum of the function $F. $ Then
\begin{eqnarray*}\label{eq12.15}
F(g(u))\leq F(g(v)), \forall g(u),g(v) \in K_g,
\end{eqnarray*}
from which, we have
\begin{eqnarray}\label{eq12.16}
e^{F(g(u))}\leq e^{F(gv))}, \forall g(u),g(v) \in K_g.
\end{eqnarray}

Since $K _g $ is a convex set, so, $ \forall g(u),g(v) \in K_g, \quad t\in [0,1], $
$$ g(v_t) = (1-t)g(u)+ tg(v) \in K_g.$$
Taking $g(v) = g(v_t) $ in (\ref{eq12.16}), we have
\begin{eqnarray} \label{eq12.17}
0 \leq  \lim_{t \rightarrow 0}\{\frac{e^{F(g(u)+t(g(v)-g(u)))}-e^{F(g(u))}}{t}\}  =  \langle e^{F(g(u))} F^{\prime}(g(u)), g(v)-g(u) \rangle.
\end{eqnarray}
Since $F $ is differentiable  strongly exponentially general convex function, so
\begin{eqnarray*}
e^{F(g(u)+t(g(v)-g(u)))} \leq  e^{F(g(u))}+ t(e^{F(g(v))}-e^{F(g(u))}) -\mu t(1-t)\|g(v)-g(u)\|^2, \\
\quad \forall g(u),g(v) \in K_g,  t \in [0,1],
\end{eqnarray*}
from which, using (\ref{eq12.17}), we have
\begin{eqnarray*}
e^{F(g(v))}-e^{F(g(u))} &\geq & \lim_{t \rightarrow 0}\{\frac{e^{F(g(u)+t(g(v)-g(u)))}-e^{F(g(u))}}{t} \}+ \mu \|g(v)-g(u)\|^2.\\
&=& \langle e^{F(g(u))}F^{\prime}(g(u)), g(v)- g(u)\rangle + \mu \|g(v)-g(u)\|^p \geq  \mu \|g(v)-g(u)\|^2,
\end{eqnarray*}
the required result (\ref{eq12.15}).
\end{proof}

\begin{remark} \quad We would like to mention that, if  $u \in H: g(u)\in K_g $ satisfies
 \begin{eqnarray}\label{eq12.19}
 \langle e^{F(g(u))}F^{\prime}(g(u)), g(v)-g(u) \rangle + \mu \|g(v)-g(u)\|^2 \geq 0, \quad \forall g(u),g(v) \in K_g,
 \end{eqnarray}
  then $u \in H: g(u)\in K_g $ is the minimum of the  function $F. $ The inequality of the type (\ref{eq12.19}) is called the strongly exponentially variational inequality. It is an interesting problem to study the existence of the inequality (\ref{eq12.19}) and to develop numerical methods for solving the strongly exponentially variational inequalities.
\end{remark}

 We would like to point that the strongly exponentially convex is also a strongly Wright   general  convex functions. From the definition \ref{def12.9}, we have
 \begin{eqnarray*}\label{ea1}
e^{F((1-t)g(u)+tg(v))}+ e^{F(tg(u)+(1-t)g(v))}&\leq &e^F(g(u))+e^F(g(v))\\
&&-2\mu t(1-t) \|g(v)-g(u)\|^2, \forall g(u),g(v) \in K_g, t\in[0,1],
\end{eqnarray*}
which is called  the  strongly Wright  exponentially convex function.  It is an interesting problem to study the properties and applications of the
strongly Wright   exponentially general convex functions.

\section{Generalizations and extensions}
We would like to mention that some of the results obtained and presented in
this paper can be extended for more strongly general  variational inequalities.
To be more precise, for a given nonlinear operator $T,A,g ,$ consider the problem of finding  $ u\in H: g(u) \in K $ such  that
\begin{eqnarray}\label{eq13.1}
\langle Tu, g(v)-g(u) \rangle \geq \langle A(u), g(v)-g(u) \rangle + \mu \|g(v)-g(u)\|^2,\quad  \forall v\in H: g(v) \in K,
\end{eqnarray}
which is called the strongly  general variational inequality.\\
If $\mu = 0, $ then problem (\ref{eq13.1}) reduces to
\begin{eqnarray}\label{eq13.1a}
\langle Tu, g(v)-g(u) \rangle \geq \langle A(u), g(v)-g(u) \rangle ,\quad  \forall v\in H: g(v) \in K,
\end{eqnarray}
is called the general strongly  variational inequalities. \\
 We would like to mention that one can obtain various classes of
general  variational inequalities for appropriate and suitable
choices of the operators $T, A, g. $ \\ \\
 Using Lemma \ref{lemma1}, one can show that the problem (\ref{eq13.1a}) is equivalent to finding $ u\in H: g(u) \in K $ such  that
\begin{eqnarray}\label{eq13.2}
g(u) = P_K[g(u)- \rho( Tu-A(u))].
\end{eqnarray}
These alternative formulations can be used to suggest and analyze similar
techniques for solving general strongly  variational inequalities
(\ref{eq13.1a}) as considered in this paper under certain extra conditions. A complete
study of these algorithms for problem (\ref{eq13.1a}) is the subject of subsequent research.
 Development of efficient and implementable algorithms for problems (\ref{eq13.1a})  need further research efforts.\\
 \noindent{\bf (I).} For  given nonlinear operators $T,A,g ,$ consider the problem of finding  $ u\in H: g(u) \in K $ such  that
\begin{eqnarray}\label{eq13.3}
\langle Tu, v-g(u) \rangle \geq \langle A(u), v-g(u) \rangle ,\quad  \forall v\in K,
\end{eqnarray}
which is also called the strongly general variational inequality.\\

\noindent{\bf (III).} For  given nonlinear operators $T,A,g ,$ consider the problem of finding  $ u\in H: g(u) \in K $ such  that
\begin{eqnarray}\label{eq13.4}
\langle Tu, g(v)-u \rangle \geq \langle A(u), g(v)-u \rangle ,\quad  \forall v\in H:  g(v) \in K,
\end{eqnarray}
is also known as the strongly general variational inequality.\\
\begin{remark}
We would like to point out that
the problems (\ref{eq13.1a}), (\ref{eq13.3}) and (\ref{eq13.4}) are quite distinct and different from each other and have significant
 applications in various branches of pure and applied sciences. They are open and interesting problems for future research.
We would like to emphasize that problems (\ref{eq13.1a}), (\ref{eq13.3}) and (\ref{eq13.4}) are equivalent in many respect and
share the basic and fundamental properties. In particular, they have the same
equivalent fixed-point formulations. Consequently, most of the results obtained in this paper continue to hold for these problems  with minor
modifications.\\
\noindent{\bf(IV).} If $ K= H, $ then the  problem (\ref{eq13.1}) is equivalent to finding $u \in H: g(u) \in H, $ such that
\begin{eqnarray} \label{eq13.5}
\langle Tu, g(v) \rangle = \langle A(u), g(v) \rangle, \quad \forall v\in H:g(v) \in H,
\end{eqnarray}
which can be viewed as the representation theorem for the nonlinear functions involving an arbitrary function $g.$
For more details, see Noor and Noor\cite{140}. \\
\noindent{\b(V).} If $A(u)= |u|, $ then the  problem (\ref{eq13.5}) is equivalent to finding $u \in H: g(u) \in H, $ such that
\begin{eqnarray} \label{eq13.6}
\langle Tu, g(v) \rangle = \langle A|u|, g(v) \rangle, \quad \forall v\in H:g(v) \in H,
\end{eqnarray}
is known as the generalized absolute value equation. See Batool et al \cite{13}  more details.  \\ \\
The theory of  general variational inequalities does not appear to have developed to an extent that
it provides a complete framework for studying these problems.  Much
more research is needed in all these areas to develop a sound basis for applications. We have
not treated variational inequalities for time-dependent problems and
the spectrum analysis of the variational inequalities.
In fact, this field has been continuing and will continue to foster new, innovative and novel
applications in various branches of pure and applied sciences. We have given only a brief
introduction of this fast growing field. The interested reader is advised to explore this field
further. It is our hope that this brief introduction may inspire and motivate the reader to
discover new and interesting applications of the general variational inequalities and related optimization problems in other areas of sciences.

\end{remark}

\section*{Acknowledgements}
We wish to express our  deepest gratitude to our
colleagues, collaborators and friends, who have direct or indirect contributions
in the process of this paper. We are also grateful to Rector, COMSATS University Islamabad, Pakistanm
 for the research facilities and support in our research endeavors. Authors would like to express their sincere thanks to the referees for their valuable suggestions and comments.
%\begin{thebibliograhpy}
\bibliographystyle{amsplain}

\end{document}